\definecolor{Cyan}{gray}{0.6}
\numberwithin{equation}{section}
\newtheorem{theorem}{Theorem}[section]
\newtheorem{lemma}[theorem]{Lemma}
\newtheorem{corollary}[theorem]{Corollary}
\newcommand{\R}{\mathbb{R}}
\newcommand{\Z}{\mathbb{Z}}
\newcommand{\N}{\mathbb{N}}
\renewcommand{\varpi}{\omega}
\renewcommand{\le}{\leqslant}
\renewcommand{\ge}{\geqslant}
\renewcommand{\epsilon}{\varepsilon}
\newcommand{\eps}{\varepsilon}
\title[Nonlocal minimal surfaces]{Nonlocal minimal surfaces: \\
interior regularity, \\
quantitative estimates \\ and boundary stickiness}
\author{Serena Dipierro and Enrico Valdinoci}
\address{{\em Serena Dipierro:} School of Mathematics and Statistics,
University of Melbourne, 813 Swanston St, Parkville VIC 3010, Australia,
School of Mathematics and Statistics,
University of Western Australia,
35 Stirling Highway,
Crawley, Perth
WA 6009, Australia}
\email{sdipierro@unimelb.edu.au}
\address{{\em Enrico Valdinoci:} School of Mathematics and Statistics,
University of Melbourne, 813 Swanston St, Parkville VIC 3010, Australia,
School of Mathematics and Statistics,
University of Western Australia,
35 Stirling Highway, Crawley, Perth WA 6009, Australia,
Weierstra{\ss}-Institut f\"ur Angewandte
Analysis und Stochastik, Hausvogteiplatz 5/7, 10117 Berlin, Germany, and
Dipartimento di Matematica, Universit\`a degli studi di Milano,
Via Saldini 50, 20133 Milan, Italy}
\begin{document}

\begin{abstract}
We consider surfaces which minimize a nonlocal perimeter
functional and we discuss their interior regularity and rigidity
properties, in a quantitative and qualitative way, and their
(perhaps rather surprising) boundary behavior. We present
at least a sketch of the proofs of these results, 
in a way that aims to be as elementary and self contained as possible,
referring to the papers~\cite{CRS, SV, CV, bego, fig, stick, joaq}
for full details.
\end{abstract}

\maketitle

\bigskip

\hfill{\it ...taurino quantum possent circumdare tergo...}\bigskip

\section{Introduction}

The study of surfaces which minimize the perimeter is
a classical topic in analysis and geometry and probably one of the oldest
problems in the mathematical literature: according to the first book
of Virgil's Aeneid, Dido, the 
legendary queen of Carthage, needed to study problems
of geometric minimization 
in order to found her reign in 814 B.C. (in spite of the great mathematical
talent of Dido and of her vivid geometric intuition, Aeneas broke his
betrothal with her after a short time
to sail the Mediterranean towards the coasts of Italy,
but this is another story).\medskip

The first problem in the study of these surfaces of minimal perimeter
(minimal surfaces, for short) lies in proving that minimizers
do exist. Indeed ``nice'' sets, for which one can compute
the perimeter using an intuitive notion known from elementary school,
turn out to be a ``non compact'' family (roughly speaking,
for instance,
an ``ugly'' set can be approximated by a sequence of
``nice'' sets, thus the limit point of the sequence may end
up outside the family). To overcome this difficulty,
a classical tool of the Calculus of Variations is to look for
minimizers in a wider family of candidates: this larger family 
has to be chosen to satisfy
the desired compactness property to ensure
the existence of a minimum, and then the regularity of
the minimal candidate can be (hopefully) proved a posteriori.\medskip

To this end, one needs to set up an appropriate notion of
perimeter for the sets in the enlarged family of candidates,
since no intuitive notion of perimeter is available, in principle,
in this generality. The classical approach of Caccioppoli (see e.g.~\cite{zbMATH02581579})
to this
question lies in the observation that if~$\Omega$ and~$E$ are\footnote{{F}rom now on, we reserve the name of~$\Omega$ to an open set,
possibly with smooth boundary, which can be seen as the ``ambient space''
for our problem.} smooth sets
and~$\nu$ is the external normal of~$E$, then, for any vector
field~$T\in C^1_0(\Omega,\R^n)$
with~$|T(x)|\le1$ for any~$x\in\Omega$, we have that
$$ T\cdot\nu \le |T|\,|\nu|\le 1.$$
Consequently, the perimeter of~$E$ in $\Omega$, i.e.
the measure of the boundary of~$E$ inside~$\Omega$
(that is, the $(n-1)$-dimensional Hausdorff measure of~$\partial E$
in~$\Omega$), satisfies the inequality
\begin{equation}\label{JK:912345}
{\rm Per}\,(E,\Omega)={\mathcal{H}}^{n-1}\big( (\partial E)\cap\Omega\big)
\ge \int_{\partial E} T\cdot\nu \,d{\mathcal{H}}^{n-1}
= \int_E {\rm div}\, T(x)\,dx,\end{equation}
for every vector field~$T\in C^1_0(\Omega,\R^n)$
with~$\|T\|_{L^\infty(\R^n,\R^n)}\le1$,
where the Divergence Theorem has been used in the last identity.

Viceversa, if~$E$ is a smooth set, its normal vector can be extended
near~$\partial E$, and then to the whole of~$\R^n$, to a vector field~$\nu_*\in
C^1(\R^n,\R^n)$, with~$|\nu_*(x)|\le1$ for any~$x\in\R^n$.
Then, if~$\eta\in C^\infty_0(\Omega,[0,1])$, 
with~$\eta=1$ in an~$\epsilon$-neighborhood of~$\Omega$,
one can take~$T:=\eta\nu_*$
and find that~$T\in C^1_0(\Omega,\R^n)$, $|T(x)|\le1$ for any~$x\in\R^n$
and
\begin{eqnarray*} && \int_E {\rm div}\,T(x)\,dx =
\int_{\partial E} T\cdot\nu \,d{\mathcal{H}}^{n-1}
\\ &&\qquad= \int_{\partial E} \eta\nu_*\cdot\nu \,d{\mathcal{H}}^{n-1}
= \int_{\partial E} \eta\,d{\mathcal{H}}^{n-1}
\\ && \qquad\ge {\mathcal{H}}^{n-1}\big( (\partial E)\cap\Omega\big)-O(\epsilon)=
{\rm Per}\,(E,\Omega)-O(\epsilon)
.\end{eqnarray*}
By taking~$\epsilon$ as small as we wish and recalling~\eqref{JK:912345},
we obtain that
\begin{equation} \label{ji:09876rg}
{\rm Per}\,(E,\Omega)= \sup_{{T\in C^1_0(\Omega,\R^n)}\atop{
\|T\|_{L^\infty(\R^n,\R^n)}\le1}}
\int_E {\rm div}\,T(x)\,dx.\end{equation}
While~\eqref{ji:09876rg} was obtained for smooth sets~$E$,
the classical approach for minimal surfaces is in fact to take~\eqref{ji:09876rg}
as {\em definition} of perimeter of a (not necessarily smooth) set~$E$ in~$\Omega$.
The class of sets obtained in this way indeed has the necessary compactness
property (and the associated 
functional has the desired lower semicontinuity properties)
to give the existence of minimizers: that is, 
one finds (at least) one set~$E\subseteq\R^n$ 
satisfying
\begin{equation}
{\rm Per}\,(E,\Omega)\le
{\rm Per}\,(F,\Omega)
\end{equation}
for any~$F\subseteq\R^n$ such that~$F$
coincides with~$E$ in a neighborhood of~$\Omega^c$.

The boundary of this minimal set~$E$ satisfies, a posteriori,
a bunch of additional regularity properties --
just to recall the principal ones:
\begin{eqnarray}
\label{n=7}
&& {\mbox{If~$n\le7$ then $(\partial E)\cap\Omega$ is smooth;}} \\
\label{n=8}
&& {\mbox{If~$n\ge8$ then $\big((\partial E)\cap\Omega\big)\setminus\Sigma$
is smooth,}}\\ &&\nonumber{\mbox{being~$\Sigma$ a closed set
of Hausdorff dimension at most~$n-8$;}} \\
&& \label{cont}
{\mbox{The statement in~\eqref{n=8} is sharp,
since there exist}}\\&&\nonumber{\mbox{examples in which the singular set~$\Sigma$}}\\ &&\nonumber{\mbox{has
Hausdorff dimension~$n-8$.}}
\end{eqnarray}
We refer to~\cite{giusti} for complete statements and proofs
(in particular, the claim in~\eqref{n=7} here
corresponds to Theorem~10.11 in~\cite{giusti},
the claim in~\eqref{n=8} here to Theorem~11.8 there,
and the claim in~\eqref{cont} here to
Theorem~16.4 there).\medskip

A natural problem that is closely related to these regularity results
is the complete description
of classical minimal surfaces in the whole of the space which
are also graphs in some direction (the so-called minimal graphs).
These questions, that go under the name of Bernstein's problem,
have, in the classical case, the following positive answer:
\begin{eqnarray}
\label{BER}
&& {\mbox{If~$n\le8$ and $E$ is a minimal graph, then~$E$ is a halfspace;}}
\\&& \label{cont:BE}
{\mbox{The statement in~\eqref{BER} is sharp,
since there exist}}\\&&\nonumber{\mbox{examples of minimal graphs in dimension~$9$
and higher}}\\&&\nonumber{\mbox{that are not halfspaces.}}\end{eqnarray}
We refer to
Theorems~17.8 and~17.10 in~\cite{giusti} for further details
on the claims in~\eqref{BER}
and~\eqref{cont:BE}, respectively.\medskip

It is also worth recalling that
\begin{equation}\label{EQ:0mc}
{\mbox{surfaces minimizing perimeters have zero mean curvature,}}
\end{equation}
see e.g. Chapter~10 in~\cite{giusti}.\medskip

Recently, and especially in light of the seminal paper~\cite{CRS},
some attention has been devoted to a variation
of the classical notion of perimeters which takes into
account also long-range interactions between sets, as well as the corresponding
minimization problem.
This type of {\it nonlocal minimal surfaces}
arises naturally, for instance, in the study of fractals~\cite{MR1111612},
cellular automata~\cite{MR2487027, MR2564467} and phase transitions~\cite{MR2948285}
(see also~\cite{2015arXiv150408292B} for a detailed introduction to
the topic).\medskip

A simple idea for defining a notion of nonlocal perimeter may
be described as follows. First of all, such nonlocal perimeter should
compute the interaction~$I$ of all the points of~$E$ against
all the points of the complement of~$E$, which we denote by~$E^c$.

On the other hand, if we want to localize these contributions
inside the domain~$\Omega$, it is convenient to split~$E$ into~$E\cap\Omega$
and~$E\setminus\Omega$, as well as the set~$E^c$ into~$E^c\cap\Omega$
and~$E^c\setminus\Omega$, and so consider the four possibilities
of interaction between~$E$ and~$E^c$ given by
\begin{equation}\label{KLA:94r5tgH}
\begin{split}
& I(E\cap\Omega, E^c\cap\Omega), \qquad I(E\cap\Omega,E^c\setminus\Omega),
\\ &I(E\setminus\Omega,E^c\cap\Omega), \qquad{\mbox{ and }}
\qquad I(E\setminus\Omega ,E^c\setminus\Omega).
\end{split}\end{equation}
Among these interactions, we observe that the latter
one only depends on the configuration of the set outside~$\Omega$,
and so
$$ I(E\setminus\Omega ,E^c\setminus\Omega) =
I(F\setminus\Omega ,F^c\setminus\Omega)$$
for any~$F\subseteq\R^n$ such that~$F\setminus \Omega=E\setminus \Omega$.
Therefore, in a minimization process with fixed data outside~$\Omega$,
the term~$I(E\setminus\Omega ,E^c\setminus\Omega)$ does not change
the minimizers. It is therefore natural to omit this term
in the energy functional (and, as a matter of fact, omitting
this term may turn out to be important from the
mathematical point of view, since this term may provide an infinite
contribution to the energy). For this reason, the nonlocal perimeter
considered in~\cite{CRS} is given by the sum of the first
three terms in~\eqref{KLA:94r5tgH}, namely one defines
$$ {\rm Per}_s\,(E,\Omega)
:=
I(E\cap\Omega, E^c\cap\Omega)+I(E\cap\Omega,E^c\setminus\Omega)+
I(E\setminus\Omega,E^c\cap\Omega).$$
As for the interaction~$I(\cdot,\cdot)$, of course some freedom
is possible, and basically any interaction for
which~$ {\rm Per}_s\,(E,\Omega)$ is finite, say, for smooth sets~$E$
makes perfect sense. A natural choice performed
in~\cite{CRS} is to take the interaction as a weighted Lebesgue
measure, where the weight is translation invariant,
isotropic and homogeneous: more precisely, for any disjoint
sets~$S_1$ and~$S_2$, one defines\footnote{We remark that~\eqref{jAH:AI}
gives that the ``natural scaling'' of the interaction~$I$
is ``meters to the power $n-2s$''
(where~$2n$ comes from~$dx\,dy$ and~$-n-2s$
comes from~$|x-y|^{-n-2s}$). When~$s=1/2$,
this scaling boils down to the one of the classical perimeter.}
\begin{equation}\label{jAH:AI}
I(S_1,S_2):=\iint_{S_1\times S_2} \frac{dx\,dy}{|x-y|^{n+2s}},\end{equation}
with~$s\in\left(0,\frac12\right)$. With this choice of the fractional
parameter~$s$, one sees that
\begin{eqnarray*}
&&[\chi_E]_{W^{\sigma,p}(\R^n)} := 
\iint_{\R^n\times \R^n} \frac{|\chi_E(x)-\chi_E(y)|^p
}{|x-y|^{n+p\sigma}} \,dx\,dy \\
&&\qquad= 2\iint_{E\times E^c} \frac{dx\,dy
}{|x-y|^{n+p\sigma}} =2I(E,E^c)=
2\,{\rm Per}_s\,(E,\R^n)
\end{eqnarray*}
as long as~$p\sigma=2s$, that is the fractional perimeter of a set
coincides (up to normalization constants) to a fractional
Sobolev norm of the corresponding characteristic function
(see e.g.~\cite{guida} for a simple introduction to
fractional Sobolev spaces).\medskip

Moreover, for any fixed~$y\in\R^n$,
\begin{equation}\label{1234}
{\rm div}_x\,\frac{x-y}{|x-y|^{n+2s}} = -\frac{2s}{|x-y|^{n+2s} }
.\end{equation}
Also, for any fixed~$x\in\R^n$,
$$ {\rm div}_y\,
\frac{\nu(x)}{|x-y|^{n+2s-2}}
= (n+2s-2)
\frac{\nu(x)\cdot(x-y)}{|x-y|^{n+2s}}.$$
Accordingly, by the Divergence Theorem\footnote{We will often
use the Divergence Theorem here in a rather formal way, by
neglecting the possible singularity of the kernel -- for a rigorous
formulation one has to check that the possible singular contributions
average out, at least for smooth sets.}
\begin{equation}\label{8uj56789:A}\begin{split}
&{\rm Per}_s\,(E,\R^n)\\=\;&-\frac{1}{2s} \int_{E^c} \,dy \left[
\int_{E} {\rm div}_x\,\frac{x-y}{|x-y|^{n+2s}} \,dx \right] \\
=\;& -\frac{1}{2s}\int_{E^c} \,dy \left[
\int_{\partial E} \frac{\nu(x)\cdot(x-y)}{|x-y|^{n+2s}} \,d{\mathcal{H}^{n-1}}(x) \right]
\\ =\;& -\frac{1}{2s\,(n+2s-2)}
\int_{\partial E} \,d{\mathcal{H}^{n-1}}(x)\,\left[
\int_{E^c} {\rm div}_y\,
\frac{\nu(x)}{|x-y|^{n+2s-2}}\,dy\right] \\
=\;& \frac{1}{2s\,(n+2s-2)}
\iint_{(\partial E)\times(\partial E)}
\frac{\nu(x)\cdot\nu(y)}{|x-y|^{n+2s-2}}
\,d{\mathcal{H}^{n-1}}(x)\,d{\mathcal{H}^{n-1}}(y).
\end{split}\end{equation}
That is, 
$$ {\rm Per}_s\,(E,\R^n)
= \frac{1}{4s\,(n+2s-2)}
\iint_{(\partial E)\times(\partial E)}
\frac{2-|\nu(x) -\nu(y)|^2 }{|x-y|^{n+2s-2}}
\,d{\mathcal{H}^{n-1}}(x)\,d{\mathcal{H}^{n-1}}(y),$$
which suggests that
the fractional perimeter is
a weighted measure of the variation of the normal vector around the boundary of a set.
As a matter of fact, as $s\nearrow1/2$, the $s$-perimeter recovers
the classical perimeter from many point of views (a sketchy discussion about
this will be given in Appendix \ref{APP12}).

Also, in Appendix \ref{9ojknAAsaw}, we briefly discuss the second variation of the $s$-perimeter
on surfaces of vanishing nonlocal mean curvature and we show that
graphs with vanishing nonlocal mean curvature cannot have horizontal normals.
\medskip

Let us now recall (among the others)
an elementary, but useful, application of this notion of fractional
perimeter in the framework of digital image reconstruction.
Suppose that we have a black and white digitalized image, say a bitmap,
in which each pixel is either colored in black or in white.
We call~$E$ the ``black set'' and we are interested in measuring
its perimeter (the reason for that may be, for instance,
that noises or impurities could be distinguished by having
``more perimeter'' than the ``real'' picture, since they may
present irregular or fractal boundaries). In doing that, we need
to be able to compute such perimeter with a very good precision.
Of course, numerical errors could affect the computation,
since the digital process replaced the real picture by
a pixel representation of it, but we would like that our computation
becomes more and more reliable if the resolution of the image is
sufficiently high, i.e. if
the size of the pixels is 
sufficiently small.

Unfortunately, we see that, in general, an accurate
computation of the perimeter is not possible, not even for simple sets,
since the numerical error produced by the pixel may not become negligible,
even when the pixels are small. To observe this phenomenon
(see e.g.~\cite{joaq}) we can consider a grid of square
pixels of small side~$\epsilon$
and a black square~$E$ of side~$1$, with the black square rotated by~$45$ degrees
with respect to the orientation of the pixels. Now, the digitalization
of the square will produce a numerical error, since, say, the
pixels that intersect the square are taken as black, and so
each side of the
square is replaced by a ``sawtooth''
curve
(see Figure~\ref{dente di sega}). 

\begin{figure}[ht]
\includegraphics[width=9.4cm]{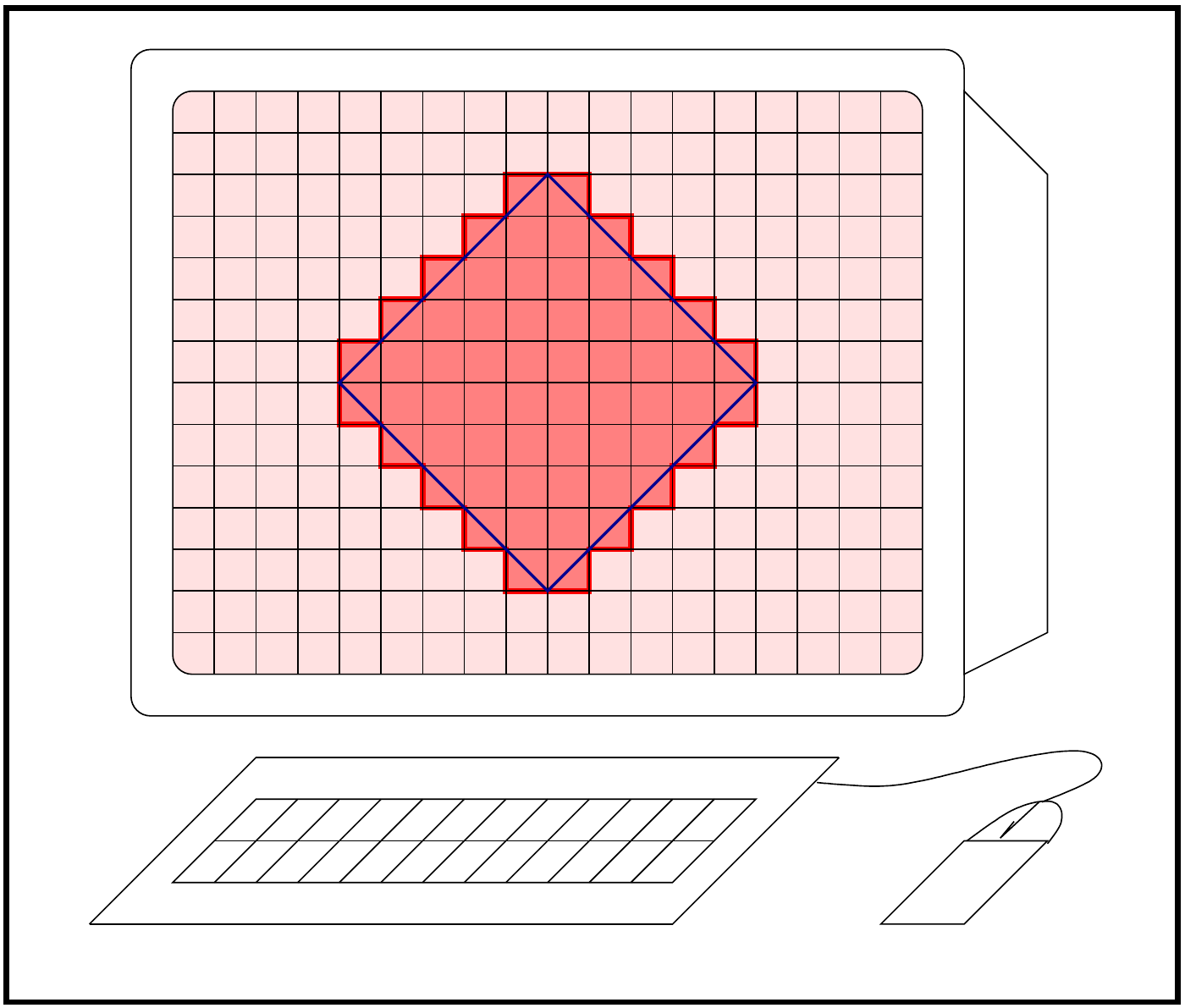}
\caption{\sl Numerical error in computing the perimeter.}
\label{dente di sega}
\end{figure}

Notice that the length of each of
these sawtooth curves is~$\sqrt{2}$ (independently on how small
each teeth is, that is independently on the size of~$\epsilon$).
As a consequence, the perimeter of the digitalized image is~$4\sqrt{2}$,
instead of~$4$, which was the original perimeter of the square.\medskip

This shows the rather unpleasant fact that the perimeter may
be poorly approximated numerically, even in case of high precision
digitalization processes. It is a rather remarkable fact
that fractional perimeters do not present the same inconvenience
and indeed the numerical error in computing the fractional perimeter 
becomes small when the pixels are small enough. Indeed,
the number of pixels which intersect the sides of the original square
is~$O(\epsilon^{-1})$ (recall that the side of the square is~$1$
and the side of each pixel is of size~$\epsilon$).
Also, the $s$-perimeter of each pixel is~$O(\epsilon^{2-2s})$
(since this is the natural scale factor of the interaction
in~\eqref{jAH:AI}, with~$n=2$). Then, the numerical error
in the fractional perimeter comes from the contributions
of all these pixels\footnote{More precisely, when the computer changes
the ``real'' square with the discretized one and produces a staircase border,
the only interactions changed are the ones affecting the union
of the triangles (that are ``half pixels'') that are added to the square
in this procedure. In the ``real'' picture, these triangles interact with the square, while in the digitalized picture they interact with
the exterior. To compute the error obtained 
one takes the signed superposition of these effects, therefore,
to estimate the error 
in absolute value, one can just sum up these contributions,
which in turn are bounded by the sum of the interactions of each triangle
with its complement, see Figure~\ref{INTER}.}
and it is therefore~$O(\epsilon^{-1})\cdot
O(\epsilon^{2-2s}) = O(\epsilon^{1-2s})$, which tends to zero
for small~$\epsilon$, thus showing that the nonlocal perimeters
are more efficient than classical ones in this type of digitalization
process.\medskip

\begin{figure}[ht]
\includegraphics[width=9.4cm]{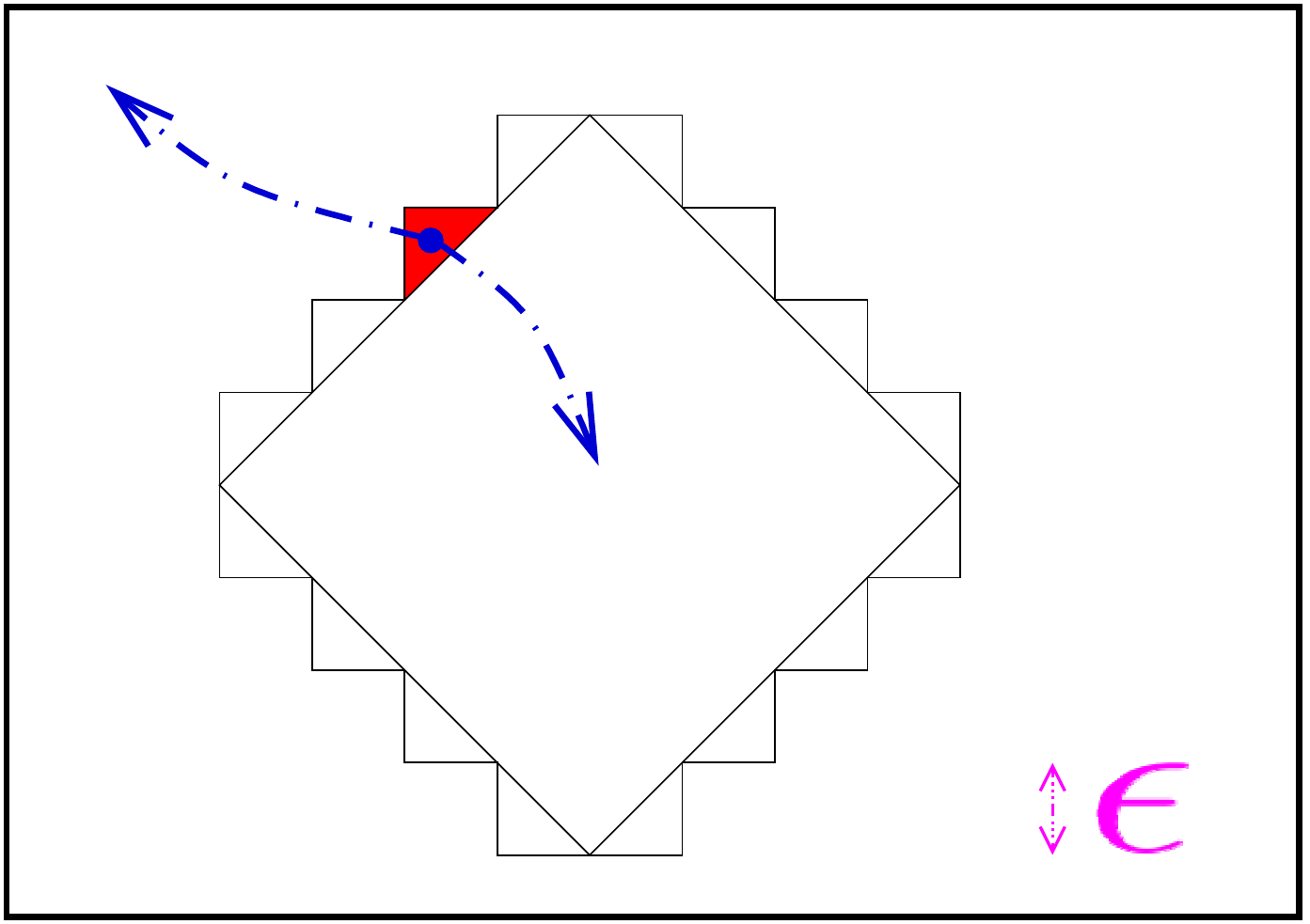}
\caption{\sl Pixel interactions and numerical errors.}
\label{INTER}
\end{figure}

Thus, given its mathematical interest and its importance in concrete
applications, it is desirable to reach
a better understanding of the surfaces which minimize
the $s$-perimeter (that one can call $s$-minimal surfaces).
To start with, let us remark that an analogue
of~\eqref{EQ:0mc} holds true, in the sense that 
$s$-minimal surfaces have vanishing $s$-mean curvature
in a sense that we now briefly describe. Given a set~$E$ with smooth
boundary and~$p\in\partial E$, we define
\begin{equation}\label{HS-DEF}
H^s_E(p):=\int_{\R^n} \frac{\chi_{E^c}(x)-\chi_E(x)}{|x-p|^{n+2s}}\,dx.\end{equation}
The expression\footnote{The definition
of fractional mean curvature in~\eqref{HS-DEF} may look
a bit awkward at a first glance. To make it appear more friendly,
we point out that
the classical mean curvature of~$\partial E$
at a point~$x\in\partial E$, up to normalizing
constants, can be computed via the
average procedure
$$ \lim_{r\searrow 0} \frac{1}{r^{n+1}}
\int_{B_r(x)} \chi_{E^c}(y)-\chi_E(y)\,dy.$$
To see this, up to rigid motions,
one can assume that~$x$ is the origin
and~$E$, in a small neighborhood of the
origin, is the subgraph of a function~$
u:\R^{n-1}\to\R$ with~$u(0)=0$ and~$\nabla
u(0)=0$. Then, we have that
\begin{eqnarray*}&&
\lim_{r\searrow 0} \frac{1}{r^{n+1}}
\int_{B_r} \chi_{E^c}(y)-\chi_E(y)\,dy
= \lim_{r\searrow 0} \frac{1}{r^{n+1}}
\int_{{\{|y'|\le r\}}\atop{\{y_n\in[-r,r]\}}}
\chi_{E^c}(y)-\chi_E(y)\,dy\\
&&\qquad\quad= -
\lim_{r\searrow 0} \frac{2}{r^{n+1}}
\int_{\{|y'|\le r\}} u(y')\,dy'
= -
\lim_{r\searrow 0} \frac{2}{r^{n+1}}
\int_{\{|y'|\le r\}} D^2 u(0)y'\cdot y'+
o(|y'|^2)\,dy'
= - c\,\Delta u(0).\end{eqnarray*}
}
in~\eqref{HS-DEF} is intended in the principal value sense, namely
the singularity is taken in an averaged limit, such as
$$ H^s_E(p)=\lim_{\rho\searrow0}\int_{\R^n\setminus B_\rho(p)} 
\frac{\chi_{E^c}(x)-\chi_E(x)}{|x-p|^{n+2s}}\,dx.$$
For simplicity, we omit the principal value from the notation.
It is also useful to recall~\eqref{1234} and to remark that~$H^s_E$
can be computed as a weighted boundary integral of the normal, namely
\begin{equation} 
\begin{split}
H^s_E(p)\,=\;& 
-\frac{1}{2s} \int_{\R^n} \big(\chi_{E^c}(x)-\chi_E(x)\big)
\,{\rm div}\,\frac{x-p}{|x-p|^{n+2s}}\,dx \\ =\;&
-\frac{1}{s} \int_{\partial E}
\frac{\nu(x)\cdot(p-x)}{|p-x|^{n+2s}}\,d{\mathcal{H}}^{n-1}(x) .
\end{split}\end{equation}
This quantity~$H^s_E$ is what we call the nonlocal mean curvature of~$E$
at the point~$p$, and the name is justified by the following
observation:

\begin{lemma}\label{iu678HHJKA}
If~$E$ is a set with smooth boundary that minimizes the $s$-perimeter
in~$\Omega$, then~$H^s_E(p)=0$ for any~$p\in (\partial E)\cap\Omega$.
\end{lemma}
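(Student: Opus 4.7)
My plan is to derive the vanishing of $H^s_E(p)$ as the Euler--Lagrange equation of the minimization problem, by computing the first variation of ${\rm Per}_s(\cdot,\Omega)$ under compactly supported normal perturbations of $\partial E$ and then invoking the fundamental lemma of the calculus of variations. More precisely, fix $p\in(\partial E)\cap\Omega$, choose a ball $B_\rho(p)\subset\Omega$ on which $\partial E$ is a smooth graph, and pick $\varphi\in C^\infty_c(B_\rho(p))$. Extend the exterior unit normal of $\partial E$ to a smooth vector field $N$ defined near the support of $\varphi$, set $X:=\varphi N\in C^\infty_c(\Omega;\R^n)$, and denote by $\Phi_t$ the flow of $X$. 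For $|t|$ small, $E_t:=\Phi_t(E)$ has smooth boundary and satisfies $E_t\setminus\Omega=E\setminus\Omega$, so $E_t$ is an admissible competitor for $E$ and minimality yields
\[
\frac{d}{dt}\bigg|_{t=0}{\rm Per}_s(E_t,\Omega)=0.
\]

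The heart of the argument is to identify this first variation with the weighted boundary integral $\int_{(\partial E)\cap\Omega}\varphi\,H^s_E\,d{\mathcal{H}}^{n-1}$. To this end, I would start from the identity
\[
{\rm Per}_s(E_t,\Omega)-{\rm Per}_s(E,\Omega)\;=\;\frac12\iint_{\R^n\times\R^n}\frac{|\chi_{E_t}(x)-\chi_{E_t}(y)|^2-|\chi_E(x)-\chi_E(y)|^2}{|x-y|^{n+2s}}\,dx\,dy,
\]
which is legitimate because $E_t\triangle E\subset\subset\Omega$: when $x,y\in\Omega^c$ the integrand vanishes, so the only surviving contributions come from pairs with at least one point in $\Omega$, and these agree with the three-term decomposition defining ${\rm Per}_s(\cdot,\Omega)$. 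Flipping the value of $\chi_E$ at a point $x$ from $0$ to $1$ changes $|\chi_E(x)-\chi_E(y)|^2$ by $(1-\chi_E(y))^2-\chi_E(y)^2=\chi_{E^c}(y)-\chi_E(y)$, and the volume of the layer swept out near a boundary point $q\in\partial E$ in time $t$ is, to leading order, $t\,\varphi(q)\,d{\mathcal{H}}^{n-1}(q)$ with the correct sign built in. Differentiating at $t=0$ therefore yields
\[
\frac{d}{dt}\bigg|_{t=0}{\rm Per}_s(E_t,\Omega)\;=\;\int_{\partial E}\varphi(q)\left(\int_{\R^n}\frac{\chi_{E^c}(y)-\chi_E(y)}{|q-y|^{n+2s}}\,dy\right)d{\mathcal{H}}^{n-1}(q)\;=\;\int_{\partial E}\varphi\,H^s_E\,d{\mathcal{H}}^{n-1}.
\]

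Since this integral vanishes for every $\varphi\in C^\infty_c(B_\rho(p))$ and $H^s_E$ is continuous on the smooth portion $(\partial E)\cap\Omega$, the fundamental lemma of the calculus of variations forces $H^s_E(p)=0$, as desired. The main technical obstacle I anticipate is making the formal manipulations above rigorous in view of the non-integrable singularity of the kernel on the diagonal. I would handle this by interpreting every occurrence of the kernel in the principal value sense (exactly as in the definition of $H^s_E$), or equivalently by foliating a tubular neighborhood of $\partial E$ by level sets of the signed distance so that the thin flipped layer is parametrized explicitly, and then checking that the singular contribution from a shrinking neighborhood of the diagonal cancels by the antisymmetry of $x-q$, thanks to the smoothness of $\partial E$.
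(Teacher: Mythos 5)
Your proposal is correct and shares the paper's overall strategy: compute the first variation of the nonlocal perimeter along a compactly supported flow, identify it with $\int_{\partial E}\varphi\,H^s_E\,d\mathcal{H}^{n-1}$, and conclude by the arbitrariness of the test function. What differs is how the first variation is computed. The paper takes $E_\epsilon = T_\epsilon(E)$ with $T_\epsilon(x)=x+\epsilon v(x)$ for an arbitrary $v\in C^\infty_0(\Omega,\R^n)$, changes variables in the double integral, Taylor-expands $|T_\epsilon(X)-T_\epsilon(Y)|^{-n-2s}$ and $|\det DT_\epsilon|$ to first order in $\epsilon$, and then applies the Divergence Theorem twice to push the linear term onto $\partial E$, where $H^s_E$ appears. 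You instead restrict to normal perturbations $X=\varphi N$, rewrite $\mathrm{Per}_s(E_t,\Omega)-\mathrm{Per}_s(E,\Omega)$ as a difference of Gagliardo seminorms of $\chi_{E_t}$ and $\chi_E$ (valid, as you observe, because the integrand vanishes on $\Omega^c\times\Omega^c$), and then differentiate by a layer/coarea argument: flipping $\chi_E$ on a thin normal shell of thickness $t\varphi$ changes the integrand precisely by $\chi_{E^c}(y)-\chi_E(y)$, yielding $\int_{\partial E}\varphi\,H^s_E$ in the limit. The coarea route is more geometric and makes the appearance of $H^s_E$ transparent without any divergence-theorem bookkeeping, at the cost of requiring a careful parametrization of the swept layer and a separate argument to tame the diagonal singularity (which you flag and propose to handle by antisymmetry in the principal-value sense); the paper's change-of-variables route is algebraically more explicit but equally formal about the singularity, as its own footnote acknowledges. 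Two small points worth noting for a full write-up: the paper's use of arbitrary (not merely normal) vector fields is no loss of generality, since only the normal component survives on the boundary; and the step where you pass from the difference of seminorms to the difference of localized perimeters implicitly requires either the finiteness of $I(E\setminus\Omega,E^c\setminus\Omega)$ (an assumption the paper makes explicitly ``for simplicity'') or the pointwise support observation you make — keep the latter, as it is cleaner.
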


The proof of Lemma~\ref{iu678HHJKA} will be given in Section~\ref{iu678HHJKA:PF}.
We refer to~\cite{CRS} for a version of Lemma~\ref{iu678HHJKA} that
holds true (in the viscosity sense) without assuming that
the set has smooth boundary. See also~\cite{abatangelo} for
further comments on this notion of nonlocal mean curvature.\medskip

Let us now briefly discuss the fractional analogue
of the regularity results in~\eqref{n=7} and~\eqref{n=8}.
At the moment, a complete regularity theory in the fractional case
is still not available. At best, one can obtain regularity results
either in low dimension or when~$s$ is sufficiently close to~$\frac12$
(see~\cite{SV, CV}
and also~\cite{bego} for higher regularity results): 
namely, the analogue of~\eqref{n=7}
is:

\begin{theorem}[Interior regularity results 
for $s$-minimal surfaces - I]\label{REG:A}
Let~$E\subset\R^n$ be a minimizer for the $s$-perimeter in~$\Omega$.
Assume that
\begin{itemize}
\item either~$n=2$,
\item or~$n\le7$ and~$\frac12-s\le\epsilon_*$, for some~$\epsilon_*>0$
sufficiently small.\end{itemize}
Then, $(\partial E)\cap\Omega$ is smooth.
\end{theorem}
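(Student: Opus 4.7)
The plan is to follow the De~Giorgi-style scheme adapted to the fractional setting: classify minimizing tangent cones in the regime of the theorem, use this classification to prove an improvement-of-flatness lemma by compactness, iterate to deduce $C^{1,\alpha}$ regularity of $\partial E$, and finally bootstrap to smoothness through the Euler--Lagrange equation $H^s_E=0$ provided by Lemma~\ref{iu678HHJKA}.

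\textbf{Compactness and tangent cones.} First I would collect the standard preparatory facts for $s$-minimizers: uniform $s$-perimeter bounds in balls (by comparison with a half-ball competitor), density estimates $|E\cap B_r(p)|,\,|E^c\cap B_r(p)|\geq c\,r^n$ for every $p\in(\partial E)\cap\Omega$ and every small $r$ (via the usual energy-dichotomy argument of~\cite{CRS}), and $L^1_{loc}$-compactness of sequences of $s$-minimizers with uniform energy bounds in a larger ball. The scale invariance of $\mathrm{Per}_s$ then implies that, along any sequence $r_k\searrow 0$, the blow-ups $r_k^{-1}(E-p)$ converge, up to subsequence, to a minimizing $s$-cone $C$, and the density estimates promote this convergence to local uniform convergence at the level of boundaries.

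\textbf{Classification of minimizing cones.} The decisive step is to show that, in both regimes of the theorem, every minimizing $s$-cone in $\R^n$ is a halfspace. For $n=2$, a minimizing $s$-cone is determined by finitely many angles on the unit circle; the vanishing of $H^s_C$ provided by Lemma~\ref{iu678HHJKA}, combined with a sign analysis of the nonlocal mean curvature of a circular sector (sliding a neighboring halfplane against the sector and monitoring the monotone change of $H^s$), forces the cone to degenerate to a halfplane, as carried out in~\cite{SV}. For $n\leq 7$ with $\tfrac12-s$ small, I would argue by contradiction: given $s_k\nearrow\tfrac12$ and nontrivial minimizing $s_k$-cones $C_k$ that fail to be halfspaces, extract an $L^1_{loc}$-limit $C_\infty$ and use the convergence of a suitably renormalized $s$-perimeter to the classical perimeter as $s\nearrow\tfrac12$ (see Appendix~\ref{APP12}) to identify $C_\infty$ as a classical minimizing cone in $\R^n$. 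Since $n\leq 7$, the Simons theorem forces $C_\infty$ to be a halfspace, and the $s$-uniform density estimates then contradict the non-flatness of the $C_k$ for $k$ large, along the lines of~\cite{CV}.

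\textbf{Flatness improvement, bootstrap, and main obstacle.} Given the cone classification, a standard indirect compactness argument yields an improvement-of-flatness lemma: if $(\partial E)\cap B_1\subset\{|x\cdot e|\leq\eta\}$ for some unit vector $e$ and some $\eta$ small enough, then $(\partial E)\cap B_{r_0}\subset\{|x\cdot e'|\leq\tfrac{\eta r_0}{2}\}$ for some $e'$ close to $e$ and a fixed $r_0\in(0,1)$. Iterating this dichotomy at every boundary point shows that $\partial E$ is locally the graph of a $C^{1,\alpha}$ function near every $p\in(\partial E)\cap\Omega$, and a Schauder-type bootstrap applied to the nonlocal mean-curvature equation, as in~\cite{bego}, upgrades $C^{1,\alpha}$ to $C^\infty$. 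The hardest step is the cone classification in the second regime: passing to the classical limit requires density bounds and energy estimates that are genuinely uniform as $s\nearrow\tfrac12$, and the near-degeneracy of the kernel $|x-y|^{-n-2s}$ is precisely what compels the quantitative smallness hypothesis $\tfrac12-s\leq\eps_*$ in the statement.
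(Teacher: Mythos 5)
The paper does not prove Theorem~\ref{REG:A}: it is stated as a survey result and the proof is deferred entirely to \cite{SV}, \cite{CV} and \cite{bego}, so your proposal can only be compared against those references. Your De~Giorgi-style scheme (density estimates and compactness from \cite{CRS}, blow-up to minimizing cones, cone classification, improvement of flatness, Schauder bootstrap via $H^s_E=0$ and \cite{bego}) faithfully reconstructs that strategy, and your account of the $n\le 7$, $s\nearrow\tfrac12$ regime by passing to a classical minimizing cone and invoking Simons' theorem is accurate for \cite{CV}.

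The one substantive imprecision is the two-dimensional cone classification. You describe it as a pointwise sign analysis of the nonlocal mean curvature of a circular sector (sliding a halfplane and tracking the monotone change of $H^s$), but the argument of \cite{SV} does not go through the Euler--Lagrange equation at all: it is a purely variational stability argument. In essence, for a minimizing cone $E\subset\R^2$ and a translation $E_t=E+tv$, a second-order energy comparison yields a bound of the form $I(E_t\setminus E,\,E\setminus E_t)\le C\,R^{n-2s-2}t^2$ --- which is precisely estimate~\eqref{YH:09} of Lemma~\ref{USEFUL LEMMA} in the present paper --- and since for $n=2$ the exponent $n-2s-2=-2s$ is negative, exploiting the conical structure as $R\to\infty$ forces $I(E_t\setminus E,\,E\setminus E_t)=0$, hence $E_t\subseteq E$ or $E\subseteq E_t$ for every direction $v$, which is impossible for a nontrivial cone. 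It is worth observing that the paper's Section~\ref{sec:primo} develops exactly this perturbation estimate, for the different purpose of proving the $BV$ bound of Theorem~\ref{B V T}; the missing ingredient you would need for the $n=2$ rigidity is therefore already on the page, and a cleaner version of your sketch could simply quote Lemma~\ref{USEFUL LEMMA} rather than appeal to an $H^s$-monotonicity argument that the cited literature does not use.
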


Similarly, a fractional analogue of~\eqref{n=8} is known, by now,
only when~$s$ is sufficiently close to~$\frac12$:

\begin{theorem}[Interior regularity results
for $s$-minimal surfaces - II]\label{REG:B}
Let~$E\subset\R^n$ be a minimizer for the $s$-perimeter in~$\Omega$.
Assume that~$n\ge8$ and~$\frac12-s\le\epsilon_n$, for some~$\epsilon_n>0$
sufficiently small.
Then, $\big((\partial E)\cap\Omega\big)\setminus\Sigma$
is smooth, being~$\Sigma$ a closed set
of Hausdorff dimension at most~$n-8$.\end{theorem}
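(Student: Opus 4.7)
The plan is to prove Theorem~\ref{REG:B} by Federer's dimension reduction, with Theorem~\ref{REG:A} providing the base case in low ambient dimension. Let
$$
\Sigma := \bigl\{ p \in (\partial E) \cap \Omega : \partial E \text{ is not smooth in any neighborhood of } p\bigr\},
$$
which is closed in $(\partial E) \cap \Omega$ and has the property that $\bigl((\partial E) \cap \Omega\bigr) \setminus \Sigma$ is smooth by definition. I would choose $\eps_n \le \eps_*$ from Theorem~\ref{REG:A} (shrinking it further if necessary during the argument) so that Theorem~\ref{REG:A} applies in every ambient dimension $2 \le k \le 7$. It then remains to establish the Hausdorff dimension bound $\dim_{\mathcal{H}}(\Sigma) \le n-8$.

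\medskip

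The first task is to set up the nonlocal blow-up machinery. For $x_0 \in \partial E$ and $r > 0$, the rescaling $E_{x_0,r} := (E - x_0)/r$ is again an $s$-minimizer on its rescaled domain. Combining uniform density estimates for $s$-minimizers (which hold in every ambient dimension) with the $L^1_{\text{loc}}$ compactness of sets of locally finite fractional perimeter, any sequence $r_k \downarrow 0$ admits a subsequence along which $E_{x_0,r_k}$ converges to a global $s$-minimizer $E_\infty \subset \R^n$. The monotonicity formula for the $s$-perimeter then forces $E_\infty$ to be an $s$-minimal cone with vertex at the origin. A complementary \emph{improvement of flatness} theorem states that if some blow-up $E_\infty$ at $x_0$ is a halfspace, then $\partial E$ is smooth in a neighborhood of $x_0$ and so $x_0 \notin \Sigma$; contrapositively, at each $x_0 \in \Sigma$ every blow-up limit is a nontrivial singular $s$-minimal cone.

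\medskip

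With these tools in place the Federer scheme is essentially formal. Suppose toward a contradiction that $\mathcal{H}^{n-8+\delta}(\Sigma) > 0$ for some $\delta > 0$. Iterating the blow-up procedure, at each step selecting a point of high density in the current singular set and using translation invariance along any line contained therein to split off a factor of $\R$, one produces, after finitely many reductions, an $s$-minimal cone $C \subset \R^k$ for some $k \le 7$ which is still singular at the origin. By Theorem~\ref{REG:A}, applied in ambient dimension $k$, the boundary $\partial C$ must be smooth away from the vertex; but a smooth $s$-minimal cone in $\R^k$ is a halfspace, contradicting the persistence of the singularity at the vertex.

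\medskip

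The hard part of this program is the nonlocal analytic input rather than the geometric combinatorics. The monotonicity formula, the cone rigidity of blow-up limits, and especially the improvement of flatness are substantially more subtle than their classical counterparts, because the $s$-perimeter couples all scales through the kernel $|x-y|^{-n-2s}$ and exterior data exert a long-range influence on interior geometry. Once these ingredients are established, the Federer reduction proceeds by the same covering and density arguments as in the classical theory, with $s$-minimality replacing classical minimality throughout.
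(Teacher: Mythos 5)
The paper does not actually prove Theorem~\ref{REG:B}: it states it and points to \cite{SV, CV, bego} (the relevant source for the dimension bound being \cite{CV}, built on the machinery of \cite{CRS}), and the organization section explicitly reserves the in-paper proofs for Lemma~\ref{iu678HHJKA}, Theorems~\ref{B V T}, \ref{1123THJ} and~\ref{PER}. So there is no internal proof to compare against; what can be assessed is whether your sketch reflects how the result is actually established.

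Your outline does track the standard route of \cite{CRS} plus \cite{CV}: density estimates and compactness to extract blow-up limits, the (extension-based) monotonicity formula to force those limits to be cones, improvement of flatness to characterize the singular set as the set of points with no flat blow-up, Federer dimension reduction to bound $\dim_{\mathcal H}\Sigma$ by $n-k^*$ where $k^*$ is the lowest dimension admitting a singular $s$-minimal cone, and finally Theorem~\ref{REG:A} — after shrinking $\eps_n$ so it covers every ambient dimension $\le 7$ — to force $k^*\ge 8$. Two tightenings are worth making. First, the endgame is phrased circuitously: applying Theorem~\ref{REG:A} to a global $s$-minimal cone $C\subset\R^k$ with $\Omega=\R^k$ yields smoothness of $\partial C$ at \emph{every} point, the vertex included, which already contradicts the singularity produced by the reduction; there is no need for the intermediate assertion that ``a smooth $s$-minimal cone in $\R^k$ is a halfspace,'' which as you state it (smoothness only away from the vertex) would itself require a Simons-type theorem rather than being automatic. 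Second, the splitting step of the Federer reduction uses the nonlocal dimension-reduction lemma — that if $C'\times\R$ is $s$-minimal in $\R^{m+1}$ then $C'$ is $s$-minimal in $\R^m$ — which is Theorem~10.1 of \cite{CRS} and is not entirely formal, because one must check that integrating the kernel $|x-y|^{-(m+1)-2s}$ over the split variable produces the correct lower-dimensional kernel. Both points are handled in the cited papers, so your sketch is essentially sound, but these are the places where a reader will press for detail.
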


Differently from the statement in~\eqref{cont}, it is not known if
Theorems~\ref{REG:A} and~\ref{REG:B} 
are sharp, and in fact
there are no known
examples of $s$-minimal surfaces with singular sets:
and, as a matter of fact, in dimension $n\le6$, these pathological
examples -- if they exist -- cannot be built by symmetric cones
(which means that they either do not exist or are pretty hard
to find!), see~\cite{del pino}.\medskip

In~\cite{joaq}, several quantitative regularity estimates
for local minimizers are given (as a matter of fact,
these estimates are valid in a much more general setting,
but, for simplicity, we focus here on the most basic statements
and proofs). For instance, 
minimizers of the $s$-perimeter have locally finite perimeter
(that is, classical perimeter, not only fractional perimeter),
as stated in the next result:

\begin{theorem}\label{B V T}
Let~$E\subset\R^n$ be a minimizer for the $s$-perimeter in~$B_R$.
Then 
$$ {\rm Per}\,(E,B_{1/2})\le CR^{n-1},$$
for a suitable constant~$C>0$.
\end{theorem}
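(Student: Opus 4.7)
The plan is to exploit dilation invariance of $s$-minimality combined with the $(n-1)$-homogeneity of the classical perimeter to reduce the general statement to the scale-invariant estimate $\mathrm{Per}(E, B_{1/2}) \le C$ for $s$-minimal sets in $B_1$; the factor $R^{n-1}$ then emerges by rescaling the minimizer $E/R$ back to $E$. Hence one may concentrate on the unit-scale case.

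For the unit-scale case, the first task is to derive a \emph{nonlocal} energy estimate. Taking $F:=E\setminus B_1$ as an admissible competitor (it agrees with $E$ outside $B_1$), the minimality inequality simplifies to
$$\mathrm{Per}_s(E, B_1) \le \mathrm{Per}_s(E\setminus B_1, B_1)= I(E\setminus B_1, B_1)\le I(\R^n \setminus B_1, B_1).$$
The last interaction is finite provided $s<1/2$: an explicit computation in a neighbourhood of $\partial B_1$, using the integrability of $t^{-2s}$ near $t=0$, shows that the singular contribution along the common boundary is controlled. This yields a universal bound on the $s$-perimeter of $E$ inside $B_1$.

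The crucial and most delicate step is to promote this fractional energy bound to a classical perimeter bound. The strategy is to lean on the density estimates that minimality furnishes: comparing $E$ with $E\cup B_r(x)$ and $E\setminus B_r(x)$ for small balls $B_r(x)\subset B_1$ yields $c r^n \le |E\cap B_r(x)| \le (1-c) r^n$ at every boundary point $x\in\partial E\cap B_{1/2}$, in the spirit of the classical De~Giorgi density estimates. Combining these bounds with a Vitali-type covering of $\partial E\cap B_{1/2}$ by balls $B_r(x_i)$ of a small fixed radius, and using an inner-ball comparison that converts the local $s$-minimality of $E$ in $B_r(x_i)$ into an $O(r^{n-1})$ bound on $\mathcal{H}^{n-1}(\partial E \cap B_r(x_i))$ (for instance, by comparing $E\cap B_r(x_i)$ with a half-space configuration and tracking the resulting interaction terms against the $s$-perimeter bound of the previous step), one obtains the global estimate $\mathrm{Per}(E, B_{1/2}) \le C$ upon summing over the cover.

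The main obstacle is precisely this passage from the nonlocal bound to the classical one. Without minimality the two quantities are incomparable: sets with fractal boundaries can have finite $s$-perimeter but infinite classical perimeter. The argument must therefore crucially invoke the rigidity that minimality imposes on $\partial E$, through the density estimates that rule out such pathologies. The most technical point is the inner-ball comparison, in which one must carefully control the $\mathcal{H}^{n-1}$-measure of $\partial E\cap B_r(x_i)$ by $r^{n-1}$ using only $s$-minimality, orchestrating the competitor, the density bound, and the fractional energy estimate so that minimality is brought to bear uniformly at every step.
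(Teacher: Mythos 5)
Your proposal does not follow the paper's route, and more importantly the central step as written has a genuine gap.

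The reduction to $R=1$ by scaling is fine, and the fractional energy bound $\mathrm{Per}_s(E,B_1)\le I(B_1,\R^n\setminus B_1)<\infty$ is correct (the paper uses exactly this inequality inside the proof of Lemma~\ref{USEFUL LEMMA}, to turn~\eqref{Yh:001} into~\eqref{Yh:002}). Density estimates for $s$-minimizers are also a real tool. The trouble is the third step, which is the entire content of the theorem. You claim an ``inner-ball comparison that converts the local $s$-minimality of $E$ in $B_r(x_i)$ into an $O(r^{n-1})$ bound on $\mathcal{H}^{n-1}(\partial E\cap B_r(x_i))$'' and then sum over a Vitali cover. But this is precisely the statement being proved, rescaled: asserting it for each $B_r(x_i)$ begs the question. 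Comparing with a half-space competitor in $B_r(x_i)$ only yields the fractional bound $\mathrm{Per}_s(E,B_r)\le Cr^{n-2s}$, and a fractional (i.e.\ $W^{2s,1}$-type) perimeter bound does \emph{not} control the classical perimeter for general sets — you yourself note that sets of fractal boundary can have finite $s$-perimeter and infinite perimeter. Density estimates rule out sets of positive measure boundary but say nothing quantitative about $\mathcal{H}^{n-1}(\partial E)$: disjoint balls of radius $r$ centered on $\partial E\cap B_{1/2}$ can number as many as $O(r^{-n})$, so the covering argument by itself only reproduces the $n$-dimensional, not the $(n-1)$-dimensional, counting. The nontrivial input from minimality that closes the gap is missing.

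The paper takes a different and essentially self-contained route that avoids density estimates altogether. It perturbs $E$ by a compactly supported translation flow $\Phi^t$, observes that the linear-in-$t$ terms cancel when comparing time $t$ and time $-t$ (a second-variation-type estimate, \eqref{Yh:001}--\eqref{Yh:002}), and then uses the exact algebraic identity~\eqref{Yh:003} together with minimality to charge a quadratic bound onto the cross-interaction $I(E_t\setminus E, E\setminus E_t)$. Since the interaction kernel is bounded below in $B_{R/2}$, this produces a quantitative $L^1$-modulus estimate $\int_{B_{1/2}}|\chi_E(x+tv)-\chi_E(x)|\,dx\le Ct$ uniformly in directions $v$, which is precisely the difference-quotient characterization of $BV$. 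The perimeter bound follows by letting $t\to0$. If you want to repair your argument, the missing mechanism is this conversion of the second-order minimality inequality into a translation-modulus bound, rather than a covering argument built on density estimates.
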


We stress that Theorem~\ref{B V T} presents several
novelties with respect to the existing literature.
First of all, it provides a scaling invariant regularity
estimate that goes beyond the natural scaling of the $s$-perimeter,
that is valid in any dimension and without any topological
restriction on the $s$-minimal surface (analogous
results for the classical perimeter are not known in this generality).
Also, in spite of the fact that, for the sake
of simplicity, we state and prove
Theorem~\ref{B V T} only in the case of
minimizers of the $s$-perimeter, more general versions
of this result hold true for stable solutions
and for more general interaction kernels (even for kernels
without any regularizing effect). This type
of results also leads to new compactness and existence theorems,
see~\cite{joaq} for full details on this topic.
\medskip

As a matter of fact, we stress that the analogue of
Theorem~\ref{B V T} for stable surfaces which are critical points
of the classical perimeter
is only known, up to now, for
two-dimensional surfaces that are
simply connected and immersed in~$ \R^3$
(hence, this is a case in which the nonlocal theory can go beyond
the local one).\medskip

Now, we briefly 
discuss the fractional analogue
of the Bernstein's problem.
Let us start by pointing out that, by combining~\eqref{n=7}
and~\eqref{BER}, we have an ``abstract'' version of
the Bernstein's problem, which states that
{\it if~$E$ is a minimal graph in~$\R^{n+1}$ and the minimal surfaces in~$\R^n$
are smooth, then~$E$ is a halfspace}.\medskip

Of course, for the way we have written~\eqref{n=7}
and~\eqref{BER}, this abstract statement seems only to say that~$8=7+1$:
nevertheless this abstract version of the Bernstein's problem
is very useful in the classical case, since it admits
a nice fractional counterpart, which is:

\begin{theorem}[Bernstein result for $s$-minimal surfaces - I]\label{GH:A}
If~$E$ is an $s$-minimal graph in~$\R^{n+1}$ and the
$s$-minimal surfaces in~$\R^n$
are smooth, then~$E$ is a halfspace.\end{theorem}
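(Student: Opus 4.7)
The plan is to follow the classical Bernstein strategy via blow-down, using the lower-dimensional smoothness assumption to carry out the cone reduction. Write $E=\{x_{n+1}<u(x')\}$ and consider the rescaled sets $E_R:=R^{-1}E$; each $E_R$ is still an $s$-minimizer of the $s$-perimeter by scaling invariance, and the uniform perimeter bound from Theorem~\ref{B V T} yields $L^1_{\mathrm{loc}}$-compactness. Extracting a subsequence $R_k\to+\infty$, one obtains a limit set $E_\infty\subseteq\R^{n+1}$ which is $s$-minimal in all of $\R^{n+1}$ by semicontinuity, and a monotonicity formula of the type in~\cite{CRS} forces $E_\infty$ to be a cone with apex at the origin. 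Since the monotonicity $E+te_{n+1}\subseteq E$ for $t\le 0$ is preserved under both rescaling and $L^1_{\mathrm{loc}}$-convergence, $E_\infty$ is itself the subgraph of a (possibly $\pm\infty$-valued) $1$-homogeneous function $v$.

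The core step is a dimensional reduction from $n+1$ down to $n$. Pick any $q\in\partial E_\infty$ with $q\ne 0$ and form the tangent cone $T_q$ of $E_\infty$ at $q$ via a further blow-up: by a second compactness/monotonicity argument $T_q$ is an $s$-minimal cone in $\R^{n+1}$. Because $E_\infty$ is already conical at the origin, $T_q$ inherits an extra translation invariance along the ray from $0$ through $q$, so after a rotation one may write $T_q=C\times\R$ for some cone $C\subseteq\R^n$. A Fubini-type inspection of the $s$-perimeter functional along the new invariance direction then forces $C$ to be $s$-minimal in $\R^n$. The standing hypothesis gives that $\partial C$ is smooth, and a cone with smooth boundary through its vertex must be a half-space; hence $T_q$ is a half-space of $\R^{n+1}$. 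Running this argument over all $q\ne 0$ shows that $\partial E_\infty$ is smooth on $\R^{n+1}\setminus\{0\}$.

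The boundary $\partial E_\infty$ is therefore the $1$-homogeneous graph of a function $v$ smooth on $\R^n\setminus\{0\}$ and satisfying $H^s_{E_\infty}\equiv 0$. The homogeneity rigidifies this equation into a condition on $S^{n-1}$ and, combined with the rigidity observation of Appendix~\ref{9ojknAAsaw} that a nonlocal minimal graph cannot have horizontal normal, forces $v$ to be linear. Hence $E_\infty$ is a half-space; since every blow-down limit of $E$ is then a half-space, an improvement-of-flatness argument of the type developed in~\cite{CRS,SV} propagates this flatness back to every finite scale and yields that $E$ itself is a half-space. The hard step is the dimensional reduction: since the $s$-perimeter does not split nicely over Cartesian products, identifying $C$ as an $s$-minimizer in $\R^n$ with the same exponent $s$ requires genuine care, and one must also handle the possibility that the $1$-homogeneous limit $v$ is degenerate, taking the values $\pm\infty$ on whole sectors.
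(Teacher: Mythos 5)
Your blow-down-and-dimension-reduction skeleton is the right framework and is broadly the route taken in~\cite{fig} (the paper itself only states Theorem~\ref{GH:A} and defers to that reference); the soft ingredients you cite — compactness via Theorem~\ref{B V T}, the cone structure of the limit from the \cite{CRS} monotonicity formula, the splitting $T_q=C\times\R$ at $q\neq 0$ and the propagation of flatness back to $E$ — are all sound. The problem is that the two places you yourself flag as ``hard'' and ``degenerate'' are not technical afterthoughts: they are the entire content of the theorem, and as written the proposal does not close either one. Most seriously, the concluding rigidity step fails. Having reduced to a $1$-homogeneous $v$, smooth on $\R^n\setminus\{0\}$, with $H^s_{E_\infty}\equiv 0$, you invoke Theorem~\ref{HONO} to ``force $v$ to be linear.'' That theorem requires $\partial E$ to be \emph{globally} $C^2$, which a nontrivial cone fails precisely at the vertex, so it does not apply; and even if it did, its conclusion $\nu_{n+1}\ne 0$ is automatic for any smooth graph and places no constraint on the profile — for instance $v(x')=|x'|$ is $1$-homogeneous, smooth away from $0$, has $\nu_{n+1}>0$ everywhere, and is not linear. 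What is actually needed is a genuine rigidity for minimizing homogeneous graphs, e.g. a Simons-type argument testing the stability inequality with the positive $0$-homogeneous Jacobi field $\nu_{n+1}$, or a density/removable-singularity argument showing the vertex is a regular point; nothing in Appendix~\ref{9ojknAAsaw} substitutes for this.

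The second gap is the degenerate limit, which you acknowledge but do not treat. When $u$ has superlinear growth, the blow-down profile $v$ equals $\pm\infty$ on whole sectors, and in that regime your chain ``dimension reduction at $q\ne 0$ $\Rightarrow$ $\partial E_\infty$ smooth away from $0$ $\Rightarrow$ finite smooth $1$-homogeneous $v$'' never produces a finite-valued graph, so the final rigidity step above is not even reached. One must instead show directly that in this case $E_\infty$ is a vertical cylinder $F\times\R e_{n+1}$ with $F\subset\R^n$ an $s$-minimizing cone (so that the hypothesis applies to $F$), and one must also rule out the mixed case in which $v$ is finite on some directions and $\pm\infty$ on others. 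Until both the rigidity step and the cylinder dichotomy are actually carried out, the argument is a plan rather than a proof.
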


This result was proved in~\cite{fig}. By combining it with
Theorem~\ref{REG:A} (using the notation~$N:=n+1$), we obtain:

\begin{theorem}[Bernstein result for $s$-minimal surfaces - II]\label{BERs:A}
Let~$E\subset\R^{N}$ be an $s$-minimal graph.
Assume that
\begin{itemize}
\item either~$N=3$,
\item or~$N\le8$ and~$\frac12-s\le\epsilon_*$, for some~$\epsilon_*>0$
sufficiently small.\end{itemize}
Then,~$E$ is a halfspace.
\end{theorem}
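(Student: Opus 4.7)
The plan is to deduce this statement directly by combining Theorem~\ref{GH:A} (the abstract Bernstein-type reduction) with the interior regularity statement Theorem~\ref{REG:A}, using the bookkeeping $N = n+1$. Theorem~\ref{GH:A} reduces the problem of showing that every $s$-minimal graph in $\R^{N}$ is a halfspace to the problem of showing that every $s$-minimal surface in $\R^{N-1}$ is smooth, and Theorem~\ref{REG:A} is precisely designed to supply that input in low dimension (or for $s$ close to $1/2$). So the argument will be essentially a two-step matching of hypotheses.

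Concretely, I would first set $n := N-1$ and check that in both alternatives of the theorem, the dimensional assumption of Theorem~\ref{REG:A} is met in dimension $n$: if $N=3$ then $n=2$, which falls into the first clause of Theorem~\ref{REG:A}; if $N\le 8$ and $\tfrac12 - s \le \epsilon_*$, then $n\le 7$ and the same $s$ satisfies the smallness required in the second clause of Theorem~\ref{REG:A} (choosing the $\epsilon_*$ in the Bernstein statement to be the one produced by Theorem~\ref{REG:A} in dimension~$7$ suffices, since making $\epsilon_*$ smaller only strengthens the hypothesis). Hence every $s$-minimal surface in $\R^{n}$ has smooth boundary, and Theorem~\ref{GH:A} then forces $E$ to be a halfspace.

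The one subtlety I would pause to verify is the passage from the local regularity statement of Theorem~\ref{REG:A} to the global smoothness hypothesis required by Theorem~\ref{GH:A}. Theorem~\ref{REG:A} gives smoothness of $(\partial F)\cap\Omega$ for any minimizer $F$ of the $s$-perimeter in an open set $\Omega$, whereas Theorem~\ref{GH:A} asks that every \emph{entire} $s$-minimal surface in $\R^{n}$ be smooth. This is handled by remarking that a global $s$-minimal surface $F\subset\R^{n}$ is, by definition, a minimizer of ${\rm Per}_s(\,\cdot\,,\Omega)$ in every bounded open $\Omega\subset\R^n$; exhausting $\R^n$ by balls $B_R$ and applying Theorem~\ref{REG:A} in each $B_R$ yields smoothness of $\partial F$ on all of $\R^n$. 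This is a routine exhaustion, not a genuine obstacle.

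The main conceptual point, therefore, is not in the deduction itself but in the fact that the dimensional shift $N \mapsto n = N-1$ is exactly the one that makes the classical relation ``$8 = 7+1$'' translate, in the fractional setting, to the relation between Theorems~\ref{REG:A} and~\ref{BERs:A}. Everything else is bookkeeping: choose the $\epsilon_*$ of the Bernstein statement as the minimum of the $\epsilon_*$ provided by Theorem~\ref{REG:A} across dimensions $n \in \{3,\dots,7\}$, invoke Theorem~\ref{REG:A} to get smoothness in $\R^{N-1}$, and then invoke Theorem~\ref{GH:A} to conclude that $E$ is a halfspace.
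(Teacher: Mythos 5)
Your proposal is correct and is essentially the paper's own argument: the paper states Theorem~\ref{BERs:A} as an immediate corollary of Theorem~\ref{GH:A} combined with Theorem~\ref{REG:A} via the substitution $N=n+1$. Your additional remarks about exhausting $\R^n$ by balls and choosing $\epsilon_*$ uniformly over $n\in\{3,\dots,7\}$ are sensible bookkeeping that the paper leaves implicit.
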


This is, at the moment, 
the fractional counterpart of~\eqref{BER}
(we stress, however, that any improvement in the
fractional
regularity theory would give for free an improvement in the fractional
Bernstein's problem, via Theorem~\ref{GH:A}).\medskip

We remark again that, differently from the claim in~\eqref{cont:BE},
it is not known if 
the statement in Theorem~\ref{BERs:A}
is sharp,
since there are no known
examples of $s$-minimal graphs other than the hyperplanes.\medskip

It is worth recalling that, by a blow-down procedure,
one can deduce from Theorem~\ref{REG:A} that global
$s$-minimal surfaces are hyperplanes, as stated in the following result:

\begin{theorem}[Flatness of $s$-minimal surfaces]\label{REG:FLA}
Let~$E\subset\R^n$ be a minimizer for the $s$-perimeter in any domain of~$\R^n$.
Assume that
\begin{itemize}
\item either~$n=2$,
\item or~$n\le7$ and~$\frac12-s\le\epsilon_*$, for some~$\epsilon_*>0$
sufficiently small.\end{itemize}
Then, $E$ is a halfspace.
\end{theorem}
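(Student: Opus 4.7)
The plan is to run a blow-down argument, reducing global minimality to the classification of minimal cones, and then invoking Theorem~\ref{REG:A} applied to the blow-down limit.

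First I would set, for a sequence $R_k \to \infty$, the rescaled sets $E_k := E/R_k$. Using the natural scaling of the $s$-perimeter (the kernel $|x-y|^{-n-2s}$ is homogeneous of degree $-n-2s$), each $E_k$ is itself a minimizer of the $s$-perimeter in every bounded domain of $\R^n$. By the uniform perimeter estimates available for $s$-minimizers (e.g.\ Theorem~\ref{B V T}), the characteristic functions $\chi_{E_k}$ have uniform $BV_{\mathrm{loc}}$ bounds, so a subsequence converges in $L^1_{\mathrm{loc}}(\R^n)$ to some set $E_\infty\subset\R^n$. Passing to the limit in the minimality inequality (using lower semicontinuity of the $s$-perimeter and the construction of suitable competitors that match $E_\infty$ outside a fixed ball) shows that $E_\infty$ is a minimizer of the $s$-perimeter in every bounded domain of $\R^n$.

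Next I would show that $E_\infty$ is a cone with vertex at the origin. This is the standard consequence of the monotonicity formula for the $s$-perimeter: for a minimizer, the quantity $r\mapsto r^{-(n-2s)}\,\mathrm{Per}_s(E,B_r)$ is nondecreasing, and in the blow-down limit it becomes constant, which via the equality case of the monotonicity formula forces $E_\infty$ to be invariant under positive dilations, i.e.\ a cone. Now apply Theorem~\ref{REG:A} to the global minimizer $E_\infty$ with $\Omega=\R^n$: under the hypotheses $n=2$, or $n\le 7$ with $\frac12-s\le\epsilon_*$, the boundary $\partial E_\infty$ is smooth throughout $\R^n$, \emph{including} at the origin. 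A cone whose boundary is smooth at its vertex must have a well-defined tangent hyperplane there, and by the cone property $\partial E_\infty$ coincides with that tangent hyperplane; hence $E_\infty$ is a halfspace.

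Finally I would pass from "the blow-down of $E$ is a halfspace" to "$E$ itself is a halfspace". This is the step I expect to be the most delicate: at this stage one knows only that $E$ is $L^1_{\mathrm{loc}}$-close to a halfspace at all sufficiently large scales, and one has to upgrade this to equality with that halfspace on all of $\R^n$. The tool for this is the improvement-of-flatness theorem for $s$-minimal surfaces (Caffarelli--Roquejoffre--Savin, in the regimes covered by Theorem~\ref{REG:A}): if $\partial E$ is $\eps$-close in some large ball $B_R$ to a hyperplane, then in $B_{R/2}$ it is $\eta\eps$-close, in a possibly tilted direction, with $\eta<1$. Iterating this dichotomy at the large scales where $E$ is already known to be arbitrarily flat (thanks to the blow-down step), the tilting directions form a Cauchy sequence and the flatness parameter decays geometrically at every scale, so $\partial E$ must coincide exactly with a single hyperplane. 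The whole difficulty is really concentrated here, because the improvement-of-flatness machinery requires exactly the same dimension/closeness-to-$\tfrac12$ restrictions listed in the statement, which is ultimately why those hypotheses reappear in Theorem~\ref{REG:FLA}.
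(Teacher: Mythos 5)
Your plan is the right one and coincides with the blow-down procedure the paper alludes to for Theorem~\ref{REG:FLA} (the paper does not spell out the proof, only remarking that it follows from Theorem~\ref{REG:A} by a blow-down). Two remarks. First, a point of precision: the monotonicity formula of~\cite{CRS} is not stated for $r\mapsto r^{2s-n}\,\mathrm{Per}_s(E,B_r)$, but for the rescaled energy of the Caffarelli--Silvestre extension of $\chi_E-\chi_{E^c}$ restricted to half-balls in $\R^{n+1}_+$; that is the monotone quantity whose constancy characterizes homogeneity, and it is what you should invoke to conclude that the blow-down limit is a cone. The conclusion you draw is nevertheless correct.

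Second, the last step is both heavier than necessary and slightly misdiagnosed. Once the blow-down cone is known to be a halfspace, you can simply run the same monotonicity argument a second time on $E$ itself: after translating so that $0\in\partial E$, the limit of the monotone quantity as $r\searrow 0$ is the density of the blow-up cone at $0$, which is also a minimizing cone and hence, again by Theorem~\ref{REG:A}, a halfspace; the limit as $r\nearrow\infty$ is the density of the blow-down halfspace; these two densities coincide, so the monotone quantity is constant, $E$ is a cone, and therefore a halfspace. This short-circuits the entire improvement-of-flatness iteration. As for the misdiagnosis: the improvement-of-flatness theorem of~\cite{CRS} is a local statement valid in every dimension $n$ and for every $s\in(0,\tfrac12)$, and it does \emph{not} carry the restrictions appearing in Theorem~\ref{REG:FLA}. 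Those restrictions enter exactly once, at the stage where you classify the blow-down (and, in the shorter route, the blow-up) cone as a halfspace via Theorem~\ref{REG:A}, which you in fact do correctly in the previous step. Your improvement-of-flatness route does work, but by the time you reach it the hypotheses of the theorem have already been fully spent.
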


Of course, a very interesting spin-off of the regularity
theory in Theorem~\ref{REG:FLA}
lies in finding quantitative flatness estimates: namely,
if we know that a set~$E$ is an~$s$-minimizer in a large domain,
can we say that it is sufficiently close to be a halfspace,
and if so, how close, and in which sense?\medskip

This question has been recently addressed in~\cite{joaq}.
As a matter of fact, the results in~\cite{joaq} are richer
than the ones we present here, and they are valid for a very general
class of interaction kernels and of perimeters of nonlocal type.
Nevertheless we think it is interesting to give a flavor
of them even in their simpler form, to underline their connection
with the regularity theory that we discussed till now.\medskip

In this setting, we present here the following result when $n=2$
(see indeed~\cite{joaq} for more general statements):

\begin{theorem}\label{1123THJ}
Let~$R\ge2$. 
Let~$E\subset\R^2$ be a minimizer for the $s$-perimeter in~$B_R$. Then
there exists a halfplane~$h$ such that
\begin{equation}\label{8yhujsdvdf8778gg}
\big| (E\Delta h)\cap B_1\big|
\le \frac{C}{R^s},\end{equation}
where~$\Delta$ is here the symmetric difference of
the two sets (i.e. $E\Delta h:=
(E\setminus h)\cup(h\setminus E)$) and~$C>0$ is a constant.
\end{theorem}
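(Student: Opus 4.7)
\emph{Overall strategy.} The plan is to combine the rigidity of Theorem~\ref{REG:FLA} (global $s$-minimal surfaces in $\R^2$ are halfplanes) with compactness to obtain a qualitative flatness, and then to upgrade this qualitative statement to the explicit rate $R^{-s}$ by an energy-comparison argument that exploits the natural scaling of the $s$-perimeter together with the long-range decay of the kernel in~\eqref{jAH:AI}.

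\emph{Compactness and qualitative smallness.} First I would run a compactness/contradiction step. Suppose that \eqref{8yhujsdvdf8778gg} fails along some sequence $R_k\to\infty$ and $E_k$ that are $s$-minimizers in $B_{R_k}$, and set $\tilde E_k:=R_k^{-1}E_k$, which is $s$-minimal in $B_1$ by covariance of $\mathrm{Per}_s$ under dilations. The uniform classical perimeter bound from Theorem~\ref{B V T}, combined with standard $L^1$-compactness of sets of locally finite perimeter and with the $L^1_{\mathrm{loc}}$-closure of $s$-minimality (a consequence of the lower semicontinuity of $\mathrm{Per}_s$ built into~\eqref{jAH:AI}), lets me extract a subsequence along which $\tilde E_k\to E_\infty$ in $L^1_{\mathrm{loc}}(\R^2)$, with $E_\infty$ a global $s$-minimizer. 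Theorem~\ref{REG:FLA} forces $E_\infty$ to be a halfplane $h_\infty$, giving the qualitative smallness $|(\tilde E_k\Delta h_\infty)\cap B_1|\to 0$, but \emph{without} a rate.

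\emph{Quantitative rate $R^{-s}$.} To produce the polynomial bound, I would run an energy comparison against a patched halfplane. Let $h$ be the halfplane that best approximates $E$ in $L^1(B_R)$, and let $F:=(E\setminus B_{R-1})\cup(h\cap B_{R-1})$, which agrees with $E$ in $B_R^c$ and is therefore an admissible competitor. The minimality inequality $\mathrm{Per}_s(E,B_R)\le\mathrm{Per}_s(F,B_R)$, rewritten using the boundary representation~\eqref{8uj56789:A} and the kernel identity~\eqref{jAH:AI}, should control $|(E\Delta h)\cap B_1|$ by a geometric interaction of the form $\iint_{B_1\times B_R^c}|x-y|^{-n-2s}\,dx\,dy$, which, since $n=2$, is $O(R^{-2s})$. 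Coupled with a coercivity estimate in which the fractional Sobolev seminorm of $\chi_E-\chi_h$ controls the $L^1$-measure of the symmetric difference, this long-range term delivers precisely the target bound $|(E\Delta h)\cap B_1|\le CR^{-s}$.

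\emph{Main obstacle.} The hard part is the last step: translating the qualitative smallness and the one-sided minimality inequality into the exact exponent $-s$. One must choose the approximating halfplane $h$, the patching radius, and the competitor so that the short-range interfacial cost generated near $\partial B_{R-1}$ does not swamp the $R^{-2s}$ long-range term, and one must extract a coercive estimate that upgrades the $s$-perimeter excess into an $L^1$-measure of the symmetric difference. This careful calibration, which is precisely the nonlocal ingredient absent in the local theory and on which the sharp power $R^{-s}$ rests, is the technical core of the argument (developed in full generality in~\cite{joaq}).
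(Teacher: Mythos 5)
Your proposal rests on two ideas, and both have genuine gaps that leave the statement unproved. The compactness step is fine as far as it goes, but you correctly note that it produces no rate; since the whole content of the theorem is the explicit power $R^{-s}$, that step is not doing any work towards the conclusion. The load-bearing step is then the energy comparison against the patched competitor $F:=(E\setminus B_{R-1})\cup(h\cap B_{R-1})$, and this is where the argument is incomplete. The one-sided inequality $\mathrm{Per}_s(E,B_R)\le\mathrm{Per}_s(F,B_R)$ gives an upper bound on an energy \emph{difference}, but you do not exhibit any coercive lower bound of the form ``$s$-perimeter excess $\gtrsim$ a power of $|(E\Delta h)\cap B_1|$'' that would convert this into the measure estimate in~\eqref{8yhujsdvdf8778gg}. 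Such an estimate is a quantitative stability statement for halfplanes under the nonlocal perimeter functional; it is not a consequence of the lower semicontinuity of $\mathrm{Per}_s$, and in fact it is precisely the piece you flag as ``the technical core'' without supplying it. Moreover, even the upper bound on the excess is not established: the competitor $F$ has a jump along $\partial B_{R-1}$ where it switches from $h$ to $E$, and the short-range part of the interaction across that interface is of the same order (or worse) than the long-range term $\iint_{B_1\times B_R^c}|x-y|^{-n-2s}\,dx\,dy\sim R^{-2s}$ unless $E$ is already known to be close to $h$ near $\partial B_{R-1}$, which is circular. Finally, the choice of the ``best'' halfplane $h$ is left unspecified, and the translation invariance of the problem makes this choice a genuine issue (a poorly chosen $h$ makes the excess identically zero in the trivial case $E=h$ and useless otherwise).

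The paper's proof is structured quite differently. It does not compare $E$ with a patched halfplane at all. Instead it uses a \emph{second-order} energy comparison via translations: in Lemma~\ref{USEFUL LEMMA}, the estimate $\mathrm{Per}_s(E_t,B_R)+\mathrm{Per}_s(E_{-t},B_R)-2\mathrm{Per}_s(E,B_R)\le CR^{n-2s-2}t^2$ (which uses only smoothness of the flow, not minimality) is combined with the inequality $\mathrm{Per}_s(E,B_R)+\mathrm{Per}_s(E_t,B_R)-\mathrm{Per}_s(E\cup E_t,B_R)-\mathrm{Per}_s(E\cap E_t,B_R)=2I(E_t\setminus E,E\setminus E_t)$ and with minimality to control the interaction $I(E_t\setminus E,E\setminus E_t)$, and hence the \emph{minimum} of the two masses $|(E_t\setminus E)\cap B_{R/2}|$, $|(E\setminus E_t)\cap B_{R/2}|$, by $CR^{(n-2s-2)/2}|t|$. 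In dimension $n=2$ this exponent is exactly $-s$. Dividing by $t$ and sending $t\to0$ turns this into the bound $\min\{\Phi_+(v),\Phi_-(v)\}\le C R^{-s}$ for every direction $v$, where $\Phi_\pm$ count (in the integral-geometric sense of~\eqref{BO:000}--\eqref{BO:000-II}) the entry and exit intersections of lines with $\partial E$. An intermediate-value argument (Lemma~\ref{12345888LAJ}) supplies a direction $v_\star$ with $\Phi_+(v_\star)=\Phi_-(v_\star)$; rotating so that $v_\star=e_1$ makes both $\Phi_\pm(e_1)$ small, and choosing orientation makes $\Phi_-(e_2)$ small. Lemma~\ref{GHA98} then converts these one-dimensional transversality counts into the two-dimensional measure estimate for $E\Delta h$. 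In short, the paper never needs any coercivity or stability of $h$, nor any control of a patching interface: the exponent $-s$ comes from the square root in the translation estimate~\eqref{GH:2TXT}, and the passage to the $L^1$ statement goes through integral geometry, not through an energy comparison with a halfplane. If you want to salvage your strategy, the missing ingredient is a quantitative stability inequality for halfplanes under $\mathrm{Per}_s$ (and a way to tame the patching interface), neither of which is supplied or standard; the route via Lemma~\ref{USEFUL LEMMA} and integral geometry avoids both.
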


We stress that Theorem~\ref{1123THJ} may be seen as a quantitative
version of Theorem~\ref{REG:FLA} when~$n=2$: indeed if~$E\subset\R^n$ is a minimizer for the $s$-perimeter in any domain of~$\R^n$
we can send~$R\nearrow+\infty$ in~\eqref{8yhujsdvdf8778gg}
and obtain that~$E$ is a halfplane.\medskip

We observe that, till now, we have presented and discussed a series of
results which are somehow in accordance, as much as possible,
with the classical case. Now we present something with striking
difference from the classical case.
The minimizers of the classical perimeter in a convex domain
reach continuously the boundary data (see e.g. Theorem~15.9
in~\cite{giusti}). Quite surprisingly,
the minimizers of the fractional perimeter have the tendency
to stick at the boundary. This phenomenon has been discovered in~\cite{stick},
where several explicit stickiness examples have been given
(see also \cite{BucLom} for other examples in more
general settings).\medskip

Roughly speaking, the stickiness phenomenon may be described
as follows. We know from Lemma~\ref{iu678HHJKA} that nonlocal minimal
surfaces in a domain~$\Omega$
need to adjust their shape in order to make the nonlocal minimal curvature
vanish inside~$\Omega$. This is a rather strong condition,
since the nonlocal minimal curvature ``sees'' the set all over
the space. As a consequence, in many cases in which
the boundary data are ``not favorable'' for this condition to hold,
the nonlocal minimal surfaces may prefer to modify their shape
by sticking at the boundary, where the condition is not prescribed,
in order to compensate the values of the nonlocal mean curvature
inside~$\Omega$.\medskip

In many cases, for instance, the nonlocal minimal set may
even prefer to ``disappear'', i.e. its contribution inside~$\Omega$
becomes empty and its boundary sticks completely to the boundary
of~$\Omega$. In concrete cases, the fact that the nonlocal minimal
set disappears may be induced by a suitable choice of
the data outside~$\Omega$ or by an appropriate choice of the
fractional parameter.
As a prototype example of these two phenomena, we recall
here the following results given in~\cite{stick}:

\begin{theorem}[Stickiness for small data]\label{ST:001}
For any~$\delta>0$, let
$$K_\delta := \big( B_{1+\delta}\setminus B_1\big)\cap \{x_n<0\}.$$
Let~$E_\delta$ be an $s$-minimal set in~$B_1$ 
among all the sets~$E$ such that~$E\setminus B_1=K_\delta$.

Then, there exists~$\delta_o>0$,
depending on $s$ and $n$, such that
for any~$\delta\in(0,\delta_o]$ we have that
$$ E_\delta=K_\delta.$$
\end{theorem}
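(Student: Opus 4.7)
The plan is to compare $E_\delta$ with the trivial competitor $F := K_\delta$, which agrees with $E_\delta$ outside $B_1$ and is empty inside $B_1$, and to show that any putative nontrivial part $A := E_\delta \cap B_1$ would cost too much energy when $\delta$ is small.

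Since $F$ is admissible, minimality of $E_\delta$ yields ${\rm Per}_s(E_\delta, B_1) \le {\rm Per}_s(K_\delta, B_1)$. Expanding both sides via the three-term decomposition and cancelling the common interaction $I(K_\delta, B_1\setminus A)$, the minimality inequality reduces to the master estimate
\begin{equation*}
I(A, B_1 \setminus A) \;+\; I\bigl(A,(\R^n\setminus B_1)\setminus K_\delta\bigr) \;\le\; I(A, K_\delta).
\end{equation*}
Adding $I(A,K_\delta)$ to both sides collapses the left-hand side to the full $s$-perimeter of $A$ in $\R^n$, so ${\rm Per}_s(A, \R^n)\le 2\, I(A, K_\delta)$.

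Next I bound $I(A, K_\delta)$ from above using only the thinness $|K_\delta|\le C\delta$ and the explicit kernel. For $x\in K_\delta$ with $|x|=1+t$, $t\in(0,\delta)$, the inner integral $\int_A |x-y|^{-n-2s}\,dy$ is controlled by $\int_{B_1}|x-y|^{-n-2s}\,dy$, which by a flat-boundary approximation scales like $C\,t^{-2s}$. Integrating over $K_\delta$ and using $s<1/2$ gives
\begin{equation*}
I(A, K_\delta)\;\le\; C\int_0^\delta t^{-2s}\,dt \;\le\; C\,\delta^{1-2s}.
\end{equation*}
Combining with the fractional isoperimetric inequality ${\rm Per}_s(A,\R^n)\ge c_n |A|^{(n-2s)/n}$, I obtain the smallness bound $|A|\le C'\delta^{n(1-2s)/(n-2s)}$.

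To close the argument I would invoke the uniform density estimates for $s$-minimizers of Caffarelli--Roquejoffre--Savin. If $A\neq\emptyset$ and some free boundary point of $E_\delta$ lies inside the open ball $B_1$, the standard interior density estimate gives a $\delta$-independent lower bound $|A|\ge c_0>0$, contradicting the upper bound above for $\delta$ small. The remaining scenario, in which the topological boundary of $E_\delta$ sits entirely on $\partial B_1$ but $A$ is still of positive measure, forces $A$ to coincide a.e.\ with a whole open portion of $B_1$, again bounded below universally and again contradicting the smallness bound. In both cases one concludes $A=\emptyset$, so $E_\delta=K_\delta$.

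The main obstacle is the density step: strictly speaking, as $\delta\to 0$ the free boundary of $E_\delta$ might cluster on $\partial B_1$ rather than have genuine interior points, and in that case one needs either a boundary-type density estimate or an auxiliary comparison to rule out the pathology. By contrast, the upper bound $I(A,K_\delta)\le C\delta^{1-2s}$ is clean because the constraint $s<1/2$ is exactly what makes $\int_0^\delta t^{-2s}\,dt$ finite and explicit, so the energetic half of the argument is essentially dimension-free.
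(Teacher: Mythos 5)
Your reduction to the ``master estimate''
\begin{equation*}
I(A, B_1 \setminus A) + I\bigl(A,(\R^n\setminus B_1)\setminus K_\delta\bigr) \le I(A, K_\delta),
\qquad A:=E_\delta\cap B_1,
\end{equation*}
is correct, and so is the upper bound $I(A,K_\delta)\le C\delta^{1-2s}$ and the consequent smallness bound $|A|\le C'\delta^{n(1-2s)/(n-2s)}$ via the fractional isoperimetric inequality. The gap is exactly the one you flag at the end, and it is a genuine one, not a formality: the interior density estimate of Caffarelli--Roquejoffre--Savin does \emph{not} give a $\delta$-independent lower bound $|A|\ge c_0>0$ as you assert. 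It only gives $\min\{|E_\delta\cap B_r(p)|,\,|E_\delta^c\cap B_r(p)|\}\ge c\,r^n$ for balls $B_r(p)\subset B_1$, hence $|A|\ge c\,d^n$ where $d=\operatorname{dist}(p,\partial B_1)$ is the distance of the free boundary point you pick from the fixed boundary. Since $d$ is free to shrink with $\delta$, this is entirely compatible with $|A|\to 0$. Concretely, a thin half-annular cap $A_\eps:=(B_1\setminus B_{1-\eps})\cap\{x_n<0\}$ with $\eps\ll\delta$ has $|A_\eps|\approx\eps$ and free boundary at distance $\eps$ from $\partial B_1$: it passes the density test and also (to leading order) the relaxed inequality $\operatorname{Per}_s(A,\R^n)\le 2I(A,K_\delta)$, since for $\eps\ll\delta$ both sides scale like $\eps^{1-2s}$ with comparable constants. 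So neither the isoperimetric step nor the density step rules this configuration out; an extra mechanism is required.

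Your second fallback scenario (``topological boundary of $E_\delta$ sits entirely on $\partial B_1$'') is actually fine on its own --- connectedness of $B_1$ forces $A\in\{\emptyset,B_1\}$ up to null sets, and $A=B_1$ is excluded by the smallness bound --- but that only handles the case where there are no free boundary points in $B_1$, not the case where there are free boundary points accumulating on $\partial B_1$. The paper itself does not give a proof of Theorem~\ref{ST:001}; it cites~\cite{stick}, where the argument does \emph{not} stop at a crude volume bound. One must either (i) prove a boundary-adapted clean-ball/density estimate valid for $E_\delta$ up to $\partial B_1$ which, together with the fact that $E_\delta\setminus B_1$ is confined to the thin half-shell $K_\delta$, excludes the clinging configuration, or (ii) exploit the unused symmetry in the master estimate --- in particular the neglected term $I(A,K_\delta')$ with $K_\delta'$ the reflection of $K_\delta$ across $\{x_n=0\}$, which sits inside $(\R^n\setminus B_1)\setminus K_\delta$ --- to extract a strict inequality that survives the borderline regime $\eps\ll\delta$. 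As written, the energetic half of your argument is solid, but the closing step is open, and the ``standard density estimate gives a $\delta$-independent lower bound'' claim should be withdrawn.
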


\begin{theorem}[Stickiness for small~$s$]\label{ST:002}
As~$s\to0^+$, the $s$-minimal set in~$B_1\subset\R^2$
that agrees with a sector outside~$B_1$
sticks to the sector.

More precisely: let~$E_s$ be
the $s$-minimizer 
among all the sets~$E$ such that
$$ E\setminus B_1=\Sigma:= \{ (x,y)\in\R^2\setminus B_1 {\mbox{ s.t. }}
x>0 {\mbox{ and }} y>0\}.$$
Then, there exists~$s_o>0$ such that for any~$s\in(0,s_o]$
we have that~$E_s=\Sigma$.
\end{theorem}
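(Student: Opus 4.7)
The plan is an energy comparison argument. Any admissible set is of the form $E=\Sigma\cup A$ with $A\subset B_1$ measurable, and a direct bookkeeping of the three interaction pieces that enter ${\rm Per}_s(\,\cdot\,,B_1)$ gives
$$ {\rm Per}_s(\Sigma\cup A,B_1)-{\rm Per}_s(\Sigma,B_1) \,=\, I(A,B_1\setminus A) \,+\, I\bigl(A,(\R^2\setminus B_1)\setminus\Sigma\bigr) \,-\, I(A,\Sigma). $$
The first term is nonnegative, so the main task is to control the signed quantity
$$ \int_A f_s(x)\,dx, \qquad f_s(x):=\int_{\R^2\setminus B_1}\frac{\chi_{\Sigma^c}(y)-\chi_\Sigma(y)}{|x-y|^{2+2s}}\,dy, $$
and show that it is positive, or else that its possible deficit is absorbed by the discarded $I(A,B_1\setminus A)$.

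The crucial geometric input is that $\Sigma$ fills only one of the four quadrants at infinity, while $(\R^2\setminus B_1)\setminus\Sigma$ fills three, producing an angular imbalance of $\tfrac{3\pi}{2}-\tfrac{\pi}{2}=\pi$. I would split the $y$-integration into a far-field piece $|y|\geq 2$ and a near-field piece $1<|y|<2$. In the far-field, expanding $|x-y|^{-2-2s}=|y|^{-2-2s}\bigl(1+O(|x|/|y|)\bigr)$ and using $\int_2^\infty r^{-1-2s}\,dr=2^{-2s}/(2s)$ yields, uniformly in $x\in B_1$,
$$ \int_{|y|\geq 2}\frac{\chi_{\Sigma^c}(y)-\chi_\Sigma(y)}{|x-y|^{2+2s}}\,dy \,=\, \frac{\pi}{2s}\,+\,O(1)\qquad\text{as }s\to 0^+. $$
The near-field piece is uniformly bounded in $s\in(0,1/2)$ when $x$ stays at positive distance $\delta(x):=d(x,\partial B_1)$ from $\partial B_1$; the only dangerous regime is $x$ approaching $\partial B_1$ from inside the first quadrant, where the near-field contribution of $\Sigma$ grows like $\tfrac{1}{s}\delta(x)^{-2s}$. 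Combining, $f_s(x)\geq \tfrac{\pi}{2s}-\tfrac{C}{s}\delta(x)^{-2s}-C$, which is strictly positive, of order $1/s$, on the set $\{\delta(x)\geq e^{-c/s}\}$ for a suitable $c>0$ and all small $s$.

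To conclude, decompose $A=A_1\cup A_2$ with $A_1:=A\cap\{\delta\geq e^{-c/s}\}$. On $A_1$ we gain $\int_{A_1} f_s\geq c'|A_1|/s$, so whenever $|A_1|>0$ the energy difference is strictly positive. In the remaining case $A\subset A_2\subset\{\delta<e^{-c/s}\}$, both the possible deficit $\int_{A_2}|f_s|\,dx$ and the lower bound for the discarded interaction $I(A,B_1\setminus A)$ turn out to be of the same order $e^{-c/s}/s$: the former by integrating the bound on $|f_s|$ over the exponentially thin strip, the latter because each point $x\in A$ sits at distance at most $e^{-c/s}$ from a macroscopic bulk portion of $B_1\setminus A$, and a polar-coordinate computation gives $\int_{B_1\setminus A}|x-y|^{-2-2s}\,dy\gtrsim 1/s$. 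A careful comparison of the constants then shows $I(A,B_1\setminus A)$ dominates the deficit for $s\leq s_o$, so that the overall energy difference is strictly positive and thus $E_s=\Sigma$.

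The main obstacle is precisely this sharp constant matching in the boundary layer: both the near-field deficit of $\int_A f_s$ and the intrinsic interaction $I(A,B_1\setminus A)$ behave like $e^{-c/s}/s$ for sets $A$ pinned to $\partial B_1$ in the first quadrant, so the argument requires a quantitative estimate ensuring the \emph{right inequality} of constants, not merely the correct order of magnitude. The rest of the proof is an asymptotic computation of elementary angular and radial integrals.
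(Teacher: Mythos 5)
The paper does not actually provide a proof of Theorem~\ref{ST:002}: it is stated in the Introduction and the reader is referred to~\cite{stick} for the argument. So there is no in-paper proof to compare against, and your proposal can only be assessed on its own merits.

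Your setup is correct and captures the right intuition. The identity
$$ {\rm Per}_s(\Sigma\cup A,B_1)-{\rm Per}_s(\Sigma,B_1) = I(A,B_1\setminus A) + \int_A f_s(x)\,dx $$
is right, the far-field asymptotics $f_s(x)=\tfrac{\pi}{2s}+O(1)$ away from $\partial B_1$ is correct (the $\pi$ being the $3\pi/2-\pi/2$ angular imbalance), and you have correctly located the only dangerous region as the thin boundary layer in the closed first quadrant of $\partial B_1$.

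However, there are two genuine gaps.

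First, the dichotomy ``whenever $|A_1|>0$ the energy difference is strictly positive'' does not follow from $\int_{A_1}f_s>0$, because the contribution $\int_{A_2}f_s$ can be large and negative (of a size comparable to the full boundary-layer deficit) even when $|A_1|>0$ is tiny, e.g.\ a tiny ball of measure $e^{-N/s}$ in the bulk together with the whole boundary strip. You would have to carry the $A_2$ deficit and the interaction $I(A,B_1\setminus A)$ together in \emph{every} case; the two cases as stated are not exhaustive and the first one is not closed.

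Second, and more seriously, the claim that both $\int_{A_2}|f_s|$ and $I(A,B_1\setminus A)$ are ``of the same order $e^{-c/s}/s$'' and that one only needs to compare constants does not reflect the actual structure. If one models $A$ as a first-quadrant strip of depth $\eta$ along $\partial B_1$, a computation with the half-plane profile gives, at leading order,
$$ \int_{A}(f_s)_-\,dx \approx \frac{\pi^2}{4s}\Bigl(\frac{\eta^{1-2s}}{1-2s}-2\eta\Bigr),\qquad I(A,B_1\setminus A)\approx \frac{\pi^2}{4s}\Bigl(\frac{\eta^{1-2s}}{1-2s}-\eta\Bigr). $$
The singular terms $\frac{\pi^2\eta^{1-2s}}{4s(1-2s)}$ are \emph{identical} on both sides and cancel exactly; the surviving margin is $\frac{\pi^2\eta}{4s}$, which is a subleading correction. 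So estimating $\int_{A_2}|f_s|$ from above and $I(A,B_1\setminus A)$ from below separately, as you propose, can at best show they are of the same size and cannot produce the sign. One has to combine them \emph{inside the integral}, i.e.\ bound from below the single kernel
$$ g_A(x):=\int_{B_1\setminus A}\frac{dy}{|x-y|^{2+2s}}+f_s(x) = \int_{E^c}\frac{dy}{|x-y|^{2+2s}}-\int_{\Sigma}\frac{dy}{|x-y|^{2+2s}}, $$
and exploit the fact that, as $x$ traverses the boundary layer, the half-plane singularities from $\Sigma$ (outside $B_1$) and from $B_1\setminus A$ (inside $B_1$) trade places and average out, leaving the angular surplus $\sim\pi/(2s)$. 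You flag the ``sharp constant matching'' as the obstacle, which is honest, but the route you chose --- separate upper and lower bounds on the two terms --- is structurally unable to see this cancellation. Unless you reorganize the estimate around $g_A$, the proof will not close. An alternative softer route, suggested by the paper itself, is to use the $s\to 0$ limit \eqref{TccP02}--\eqref{TccP03}: for any competitor $E=\Sigma\cup A$, $\tfrac{s}{\pi}\,{\rm Per}_s(E,B_1)\to \tfrac{\pi}{4}+\tfrac12|A|$, which makes the emptiness of $A$ in the limit transparent; but upgrading that to $A=\emptyset$ for all small $s>0$ still requires a quantitative boundary-layer estimate of the kind discussed above.
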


We stress the sharp difference between the local and the nonlocal
cases exposed in Theorems~\ref{ST:001}
and~\ref{ST:002}: indeed, in the local framework, in both cases
the minimal surface is a segment inside the ball~$B_1$,
while in the nonlocal case it coincides with a piece of the circumference~$\partial B_1$.
\medskip

The stickiness phenomenon of nonlocal minimal surfaces
may also be caused
by a sufficiently high oscillation of the data outside~$\Omega$.
This concept is exposed in the following result:

\begin{theorem}[Stickiness coming from large oscillations of the data]\label{M:SR}
Let~$M>1$ and let~$E_M\subset\R^2$
be $s$-minimal in~$(-1,1)\times\R$
with datum outside~$(-1,1)\times\R$ given by~$J_M:=J^-_M \cup J^+_M$,
where
$$ J^-_M:= (-\infty,-1]\times (-\infty,-M)
\quad{\mbox{ and }}\quad
J^+_M:= [1,+\infty)\times (-\infty,M).$$
Then, if~$M$ is large enough,~$E_M$ sticks at the boundary.
Moreover, the stickiness region gets close to the origin,
up to a power of~$M$.

More precisely:
there exist~$M_o>0$ and~$C_o\ge C_o'>0$, depending on~$s$, such that
if~$M\ge M_o$ then
\begin{equation}\label{JAK:OP}
\begin{split}
& [-1,1)\times [C_o M^{\frac{1+2s}{2+2s}},\,M]\subseteq E_M^c 
\\{\mbox{and }}& \quad(-1,1]\times [-M,\,-C_o M^{\frac{1+2s}{2+2s}}]\subseteq E_M.
\end{split}\end{equation}
\end{theorem}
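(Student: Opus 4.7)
\emph{Symmetry reduction.} The exterior datum $J_M$ enjoys a central symmetry: letting $\sigma(x,y) := (-x,-y)$, direct inspection of the definitions of $J_M^\pm$ shows $\sigma(J_M) \cap \Omega^c = \Omega^c \setminus J_M$. Hence, if $E_M$ is an $s$-minimizer for the exterior datum $J_M$, so is $\sigma(\R^2 \setminus E_M)$, and by replacing $E_M$ with a symmetrized minimizer we may assume $\sigma(\R^2 \setminus E_M) = E_M$. Under $\sigma$ the two rectangles appearing in~(\ref{JAK:OP}) swap, and the two inclusions become equivalent (with $\alpha := (1+2s)/(2+2s)$). It is therefore enough to prove the first one, namely $[-1,1) \times [C_o M^\alpha, M] \subseteq E_M^c$ for $C_o > 0$ large enough.

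\emph{Competitor argument.} I argue by contradiction: assume $E_M$ meets $(-1,1) \times [h, M]$ for some $h \le C_o M^\alpha$. Set $T := E_M \cap \bigl((-1,1) \times (h, +\infty)\bigr)$ and consider the competitor $F := E_M \setminus T$; since $T \subset \Omega$, $F$ shares the exterior datum with $E_M$. Expanding the minimality inequality ${\rm Per}_s(E_M, \Omega) \le {\rm Per}_s(F, \Omega)$ bilinearly in $I$ yields the competitor relation
\begin{equation}
\label{CompEqFinal}
I(T,\, E_M \setminus T) \;\ge\; I(T,\, E_M^c).
\end{equation}
The strategy is to reverse this inequality under the assumption $h < C_o M^\alpha$. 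The right-hand side is bounded below by the interaction of $T$ with the ``upper-right'' piece $[1,+\infty) \times (M,+\infty) \subseteq E_M^c$, separated from $T$ by the vertical gap $M-h$ (contribution of order $(M-h)^{-2s}$ per point of $T$). The left-hand side is bounded above using that $E_M \setminus T \subseteq J_M^+ \cup J_M^- \cup (E_M \cap \Omega \setminus T)$: the main contribution comes from the half-plane $J_M^+$ at horizontal distance $1-x$ from $p = (x,y) \in T$, of order $(1-x)^{-2s}$, while $J_M^-$ is negligibly far and the ``lower bulk'' part is controlled by tail estimates.

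\emph{Balance and main obstacle.} Combining the two bounds in~(\ref{CompEqFinal}) and tracking the volume of $T$ yields an inequality of the shape $M^{1+2s} \lesssim h^{2+2s}$, which rearranges to $h \gtrsim M^\alpha$ and contradicts $h \le C_o M^\alpha$ for $C_o$ sufficiently large. The principal obstacle is to obtain the \emph{sharp} exponent $\alpha = (1+2s)/(2+2s)$: a naive application of the triangle inequality or of elementary tail bounds yields only weaker rates. To reach the optimal exponent one must (i) invoke the Euler--Lagrange equation $H^s_{E_M}=0$ on $\partial E_M \cap \Omega$ (Lemma~\ref{iu678HHJKA}) to constrain the geometry of $T$ near its topmost point, and (ii) perform a careful corner analysis at $(1,M)$, where the ``attracting'' contribution from $J_M^+$ (below $y = M$) and the ``repelling'' contribution from $E_M^c$ (above $y = M$) balance through the singular kernel $|z-p|^{-(2+2s)}$. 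The precise corner matching between these two competing interactions is what produces the exponent $\alpha$ and is the technical heart of the proof.
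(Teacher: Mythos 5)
The paper does not prove Theorem~\ref{M:SR} --- it is stated with a pointer to \cite{stick} --- and the only stickiness result that receives even a sketch here is Theorem~\ref{PER}, which is done by a sliding-barrier construction in Section~\ref{PER:S}, not by a cut-off competitor. So your proposal has to be assessed on its own.

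Your symmetry reduction is correct, and the identity $I(T,E_M\setminus T)\ge I(T,E_M^c)$ is indeed what minimality gives for the competitor $F=E_M\setminus T$. The gap --- which you flag yourself --- is that the two bounds you propose do not reverse this inequality. You bound the right-hand side below by the interaction of $T$ with $[1,+\infty)\times(M,+\infty)$, which is of order $|T|\,(M-h)^{-2s}$, hence \emph{small}; you bound the left-hand side above by the interaction of $T$ with $J_M^+$, which is of order $|T|$ since the horizontal gap is at most $2$, hence \emph{large}. These two estimates are entirely compatible with the competitor inequality and produce no contradiction whatsoever. What is actually needed is a near-cancellation between the attraction coming from $J_M^+$ and the repulsion coming from $E_M^c$ (the part $E_M^c\cap\Omega$ hugging $T$ from above together with $[1,+\infty)\times[M,+\infty)$) near the corner $(1,M)$, combined with the stationarity $H^s_{E_M}=0$; this is exactly the piece you defer, and it is where the exponent $\tfrac{1+2s}{2+2s}$ really comes from. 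As written, $M^{1+2s}\lesssim h^{2+2s}$ is asserted, not derived. There is also a logical slip in the contradiction setup: ``assume $E_M$ meets $(-1,1)\times[h,M]$ for some $h\le C_oM^{(1+2s)/(2+2s)}$'' is vacuously true and cannot be negated; you should instead fix a point of $E_M\cap\Omega$ at height $y_0\in\big[C_oM^{(1+2s)/(2+2s)},M\big]$ and set $h:=y_0$, or work with the supremum of heights attained by $E_M$ in $\Omega$. In short: a plausible skeleton, with the quantitative core of the proof missing.
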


It is worth to remark that the stickiness phenomenon in Theorem~\ref{M:SR}
becomes ``more and more visible'' as the oscillation of the data increase,
since, referring to~\eqref{JAK:OP}, we have that
$$ \lim_{M\to+\infty} \frac{ M^{\frac{1+2s}{2+2s}} }{M}=0,$$
hence the sticked portion of~$E_M$ on~$\partial\Omega$
becomes, proportionally to~$M$, larger and larger when~$M\to+\infty$.

Also, the exponent~${\frac{1+2s}{2+2s}}$ in~\eqref{JAK:OP} is optimal,
see again~\cite{stick}. The stickiness phenomenon detected in Theorem~\ref{M:SR}
is described in Figure~\ref{M:SR:F}.

\begin{figure}[ht]
\includegraphics[width=6.9cm]{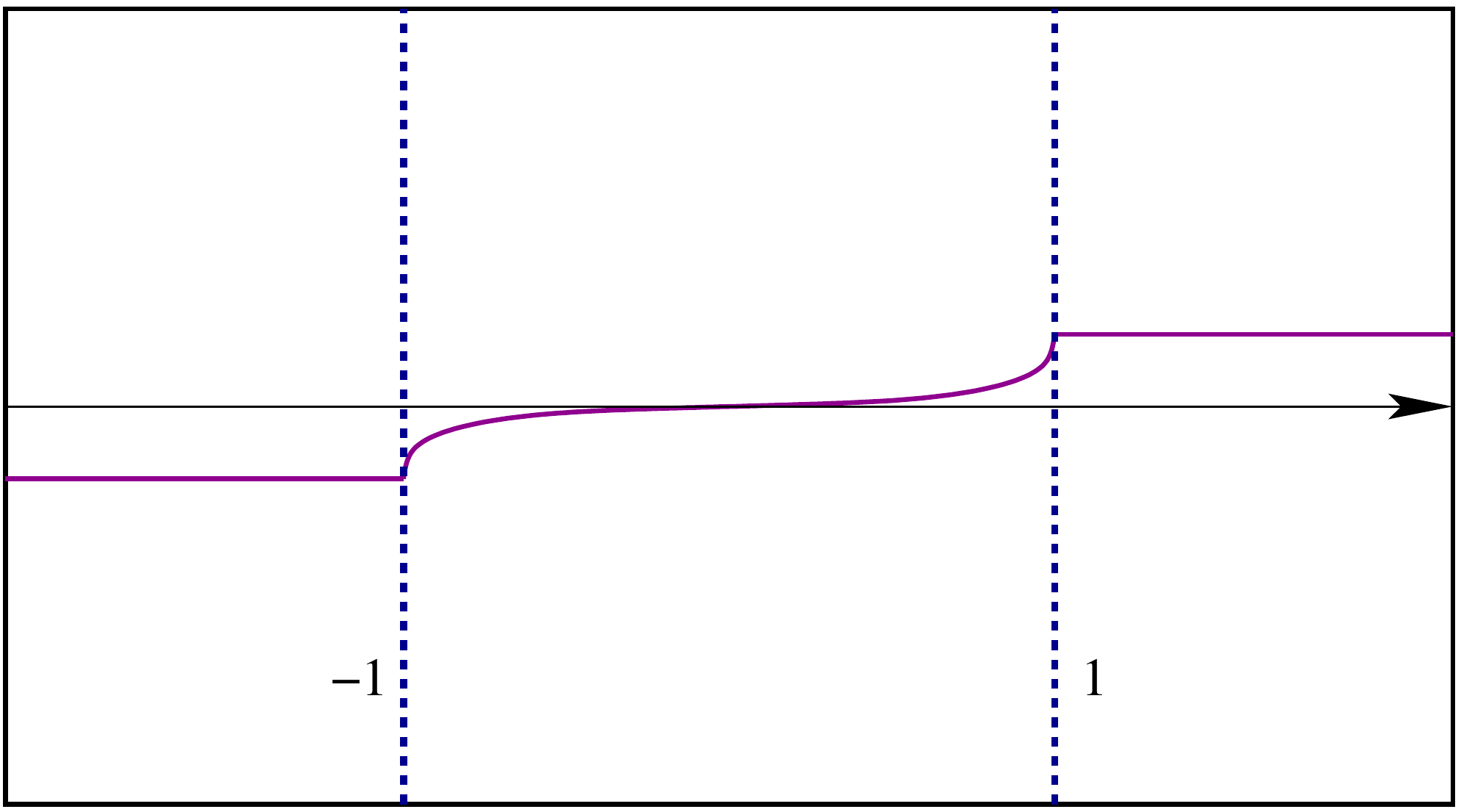}\hspace{0.3cm}
\includegraphics[width=6.9cm]{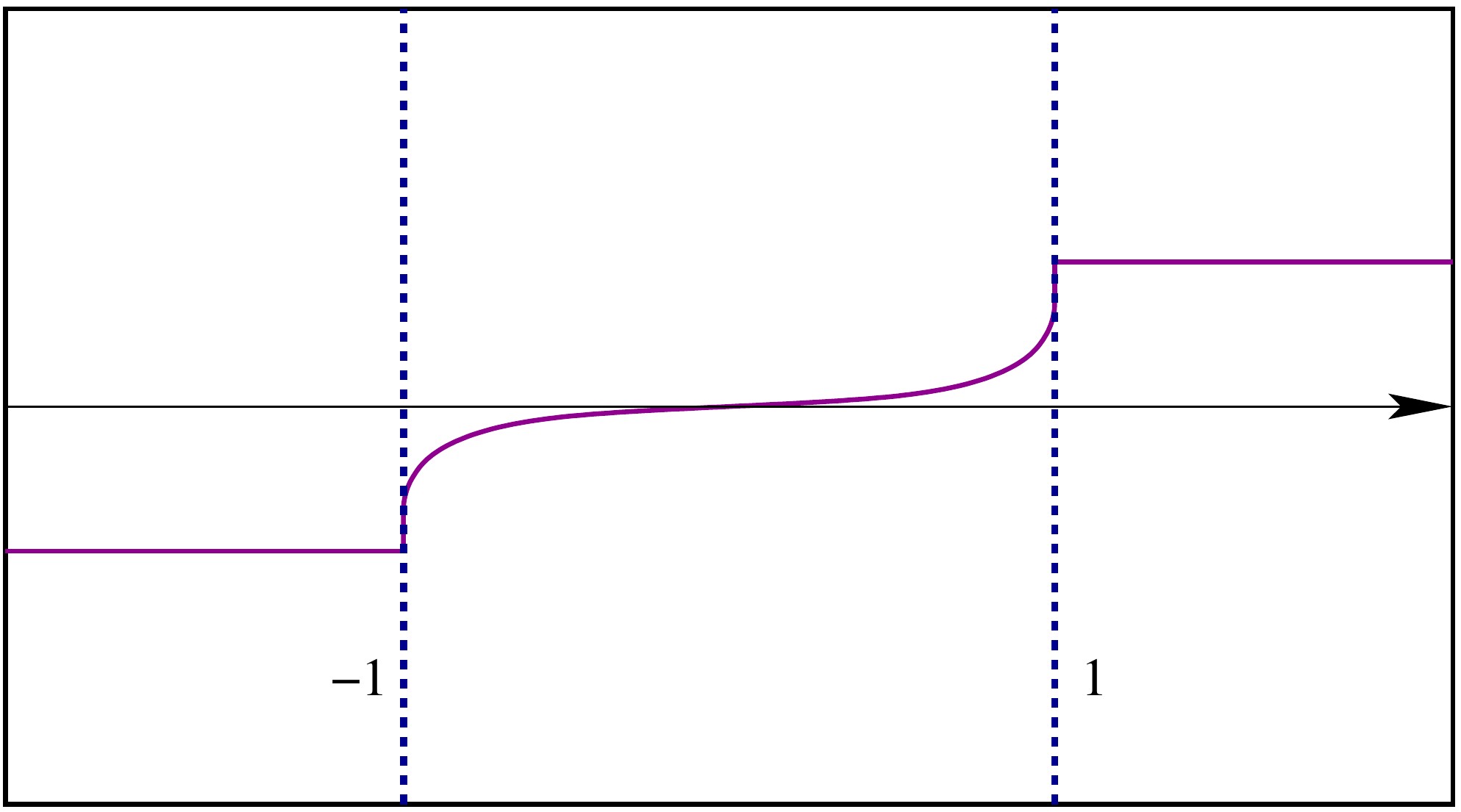}\vspace{0.3cm}
\includegraphics[width=6.9cm]{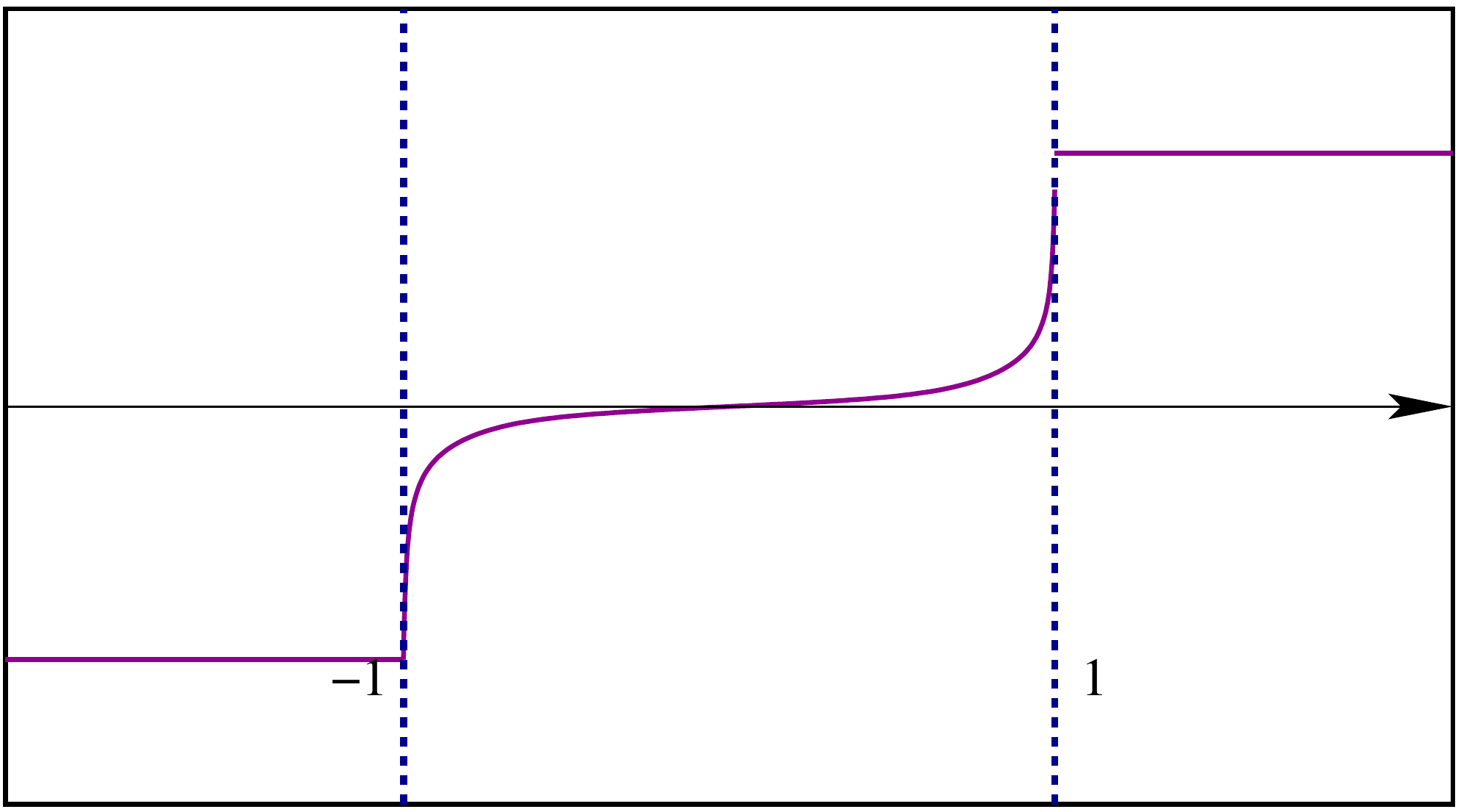}\hspace{0.3cm}
\includegraphics[width=6.9cm]{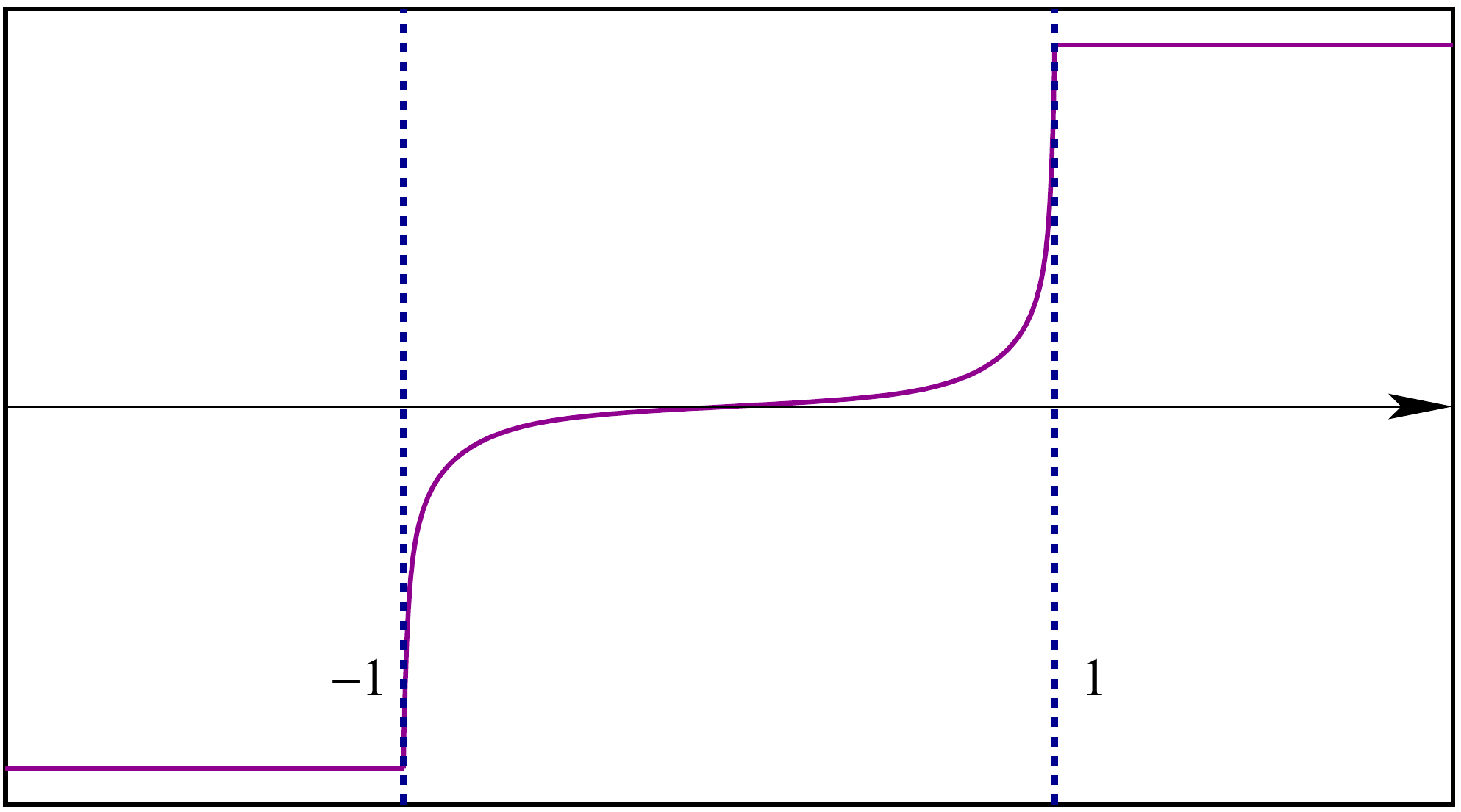}
\caption{\sl Stickiness 
coming from large oscillations of the data
with the oscillation progressively larger.}
\label{M:SR:F}
\end{figure}

\medskip

We believe that the stickiness phenomenon is rather common
among nonlocal minimal surfaces. Indeed, it may occur even
under small modifications of boundary data for which the nonlocal
minimal surfaces cut the boundary in a transversal way.

A typical, and rather striking, example of this
situation happens for perturbation of halfplanes in~$\R^2$.
That is, an arbitrarily small perturbation of the data
corresponding to halfplanes is sufficient for the stickiness
phenomenon to occur. Of course, the smaller the perturbation,
the smaller the stickiness: nevertheless, small perturbations
are enough to cause the fact that the boundary data of nonlocal
minimal surfaces are not attained in a continuous way,
and indeed they may exhibit jumps (notice that this lack
of boundary regularity for $s$-minimal surfaces is
rather surprising, especially after the interior regularity
results discussed in Theorem~\ref{REG:A} and~\ref{REG:B}
and it shows that the boundary behavior of the halfplanes
is rather unstable).

A detailed result goes as follows:

\begin{theorem}[Stickiness arising from perturbation of halfplanes]\label{PER}
There exists~$\delta_0 > 0$
such that for any~$\delta\ in(0, \delta_0]$ 
the following statement holds true.

Let~$\Omega:=(-1,1)\times\R$. Let also
$$ F_- := [-3,-2]\times[0,\delta],\qquad
F_+ := [2,3]\times[0,\delta],\qquad H:=\R\times(-\infty,0).$$
Assume that~$F\subseteq\R^2$, with
$$ F\supseteq H\cup F_-\cup F_+.$$

Let~$E$ be an $s$-minimal set in~$\Omega$
among all the sets which coincide with~$F$ outside~$\Omega$.

Then, 
$$ E\supseteq (-1,1)\times[0,\delta^\gamma],$$
for a suitable~$\gamma>1$.
\end{theorem}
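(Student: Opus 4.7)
The plan is to realize the minimizer~$E$ as containing a carefully chosen superminimizer~$\Sigma$ that already engulfs the strip~$(-1,1)\times[0,\delta^\gamma]$, and then invoke the comparison principle for $s$-minimal surfaces. As a preliminary reduction, since the halfplane~$H$ is an $s$-minimizer in any domain (its nonlocal mean curvature vanishes identically on the boundary) and $F \supseteq H$ outside~$\Omega$, the standard monotonicity for minimizers with ordered data gives $E \cap \Omega \supseteq H \cap \Omega = (-1,1)\times(-\infty,0)$. The real issue is therefore to push~$E$ \emph{above} the axis by a height~$\delta^\gamma$ inside~$\Omega$.

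Set $h := \delta^\gamma$ with $\gamma>1$ to be chosen, and consider
\[
\Sigma \;:=\; F \,\cup\, \big((-1,1)\times[0,h]\big),
\]
so that $\Sigma$ agrees with~$F$ outside~$\Omega$ while $\Sigma \cap \Omega = \{x_2\le h\}\cap\Omega$. I would check that $\Sigma$ is a superminimizer of the $s$-perimeter in~$\Omega$, i.e.\ that $H^s_\Sigma(p)\le 0$ at every $p \in (\partial\Sigma)\cap\Omega$. Any such point has the form $(p_1,h)$ with $p_1\in(-1,1)$, and subtracting the flat halfplane $H_h := \R\times(-\infty,h]$, for which $H^s_{H_h}(p)=0$ by reflection symmetry, one obtains the key identity
\[
H^s_\Sigma(p)\;=\; 2\int_{H_h\setminus\Sigma}\frac{dx}{|x-p|^{2+2s}}\;-\; 2\int_{\Sigma\setminus H_h}\frac{dx}{|x-p|^{2+2s}}.
\]
The set $H_h\setminus\Sigma$ consists of the two ``ears'' $(\R\setminus(-1,1))\times(0,h]$ (minus the parts of~$F_\pm$ already contained in~$\Sigma$), while $\Sigma\setminus H_h$ contains the parts of the bumps~$F_\pm$ sitting above height~$h$.

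For $p_1$ bounded away from $\pm 1$, the ears lie at horizontal distance $\gtrsim 1$ from~$p$ and have measure $\lesssim h$, so the first integral is at most $C\,\delta^\gamma$; meanwhile $\Sigma\setminus H_h$ has measure $\simeq\delta$ (once $h\ll\delta$) at distance $\simeq 1$ from~$p$, giving at least $c\,\delta$ for the second. For any $\gamma>1$ this yields $H^s_\Sigma(p)\le C\,\delta^\gamma - c\,\delta < 0$ as soon as~$\delta$ is small enough; $\Sigma$ is then a superminimizer, and the comparison principle furnishes $E \supseteq \Sigma \supseteq (-1,1)\times[0,\delta^\gamma]$.

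The hard part will be that the upper bound on the ear integral degenerates as $p_1 \to \pm 1$: an explicit computation shows it grows like $h/(1-|p_1|)^{1+2s}$, so with the flat-top barrier~$\Sigma$ the inequality $H^s_\Sigma\le 0$ breaks down in a thin strip of width $\simeq\delta^{(\gamma-1)/(1+2s)}$ near $\{x_1=\pm 1\}$. To reach the entire open interval $(-1,1)$, one needs to replace the flat top of~$\Sigma$ by a profile that gently tapers towards the vertical boundary of~$\Omega$ and then verify the superminimizer inequality also on the resulting lateral portion of~$\partial\Sigma\cap\Omega$. The $(2+2s)$-homogeneity of the kernel dictates the admissible exponent~$\gamma>1$ in this refined barrier; this quantitative balance, together with the careful choice of the tapering profile, is where the main analytic work of~\cite{stick} is located.
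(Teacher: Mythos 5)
Your overall strategy---construct a barrier with nonpositive fractional mean curvature that coincides with the data outside~$\Omega$ and contains~$(-1,1)\times[0,\delta^\gamma]$, then slide it up against the minimizer---is exactly the one the paper follows, and your leading-order balance comparing the ``ear'' contribution ($\sim\delta^\gamma/(1-|p_1|)^{1+2s}$) with the ``bump'' contribution ($\sim\delta$) is the right heuristic. The difficulty that you yourself flag, however, is fatal to the flat-top barrier: on the strips $1-|p_1|\lesssim\delta^{(\gamma-1)/(1+2s)}$ near~$\{x_1=\pm1\}$ the inequality $H^s_\Sigma\le 0$ fails, and the sliding/maximum-principle step requires nonpositive curvature at \emph{every} point of $(\partial\Sigma)\cap\Omega$, since a first touching point in the bad strip produces no contradiction. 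So $E\supseteq\Sigma$ simply does not follow from what you write, and the sentence ``one needs to replace the flat top by a tapered profile'' is a restatement of the problem rather than a step towards its solution.

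What closes this gap is the multi-step barrier construction that is the actual content of Section~\ref{PER:S} (Steps 1--5, with details in Sections 5--8 of~\cite{stick}): one starts from the corner $\{y\le x_+/\ell\}$, whose curvature at a boundary point~$X$ with $x>0$ is $\lesssim-c/(\max\{1,\ell\}\,|X|^{2s})$; passes iteratively through a sequence of decreasing slopes; glues to a concave power profile $\sim x^{1/2+s+\eps_0}$; reflects and truncates to obtain a compactly supported bump; and finally adds side bumps to absorb the positive curvature created at the top. It is this construction---not the kernel's homogeneity alone---that forces the exponent $\gamma=(2+\eps_0)/(1-2s)$, which is always strictly larger than~$2$ and blows up as $s\nearrow\tfrac12$; the ``any $\gamma>1$'' you read off from the interior computation is an artifact of the invalid flat barrier and is not the exponent actually obtained. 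In short: you have correctly diagnosed where the difficulty lies, but the explicit tapered barrier is the heart of the theorem, and it is entirely absent from the proposal.
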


The result of Theorem~\ref{PER} is depicted in Figure~\ref{FPER}.

\begin{figure}[ht]
\includegraphics[width=6.9cm]{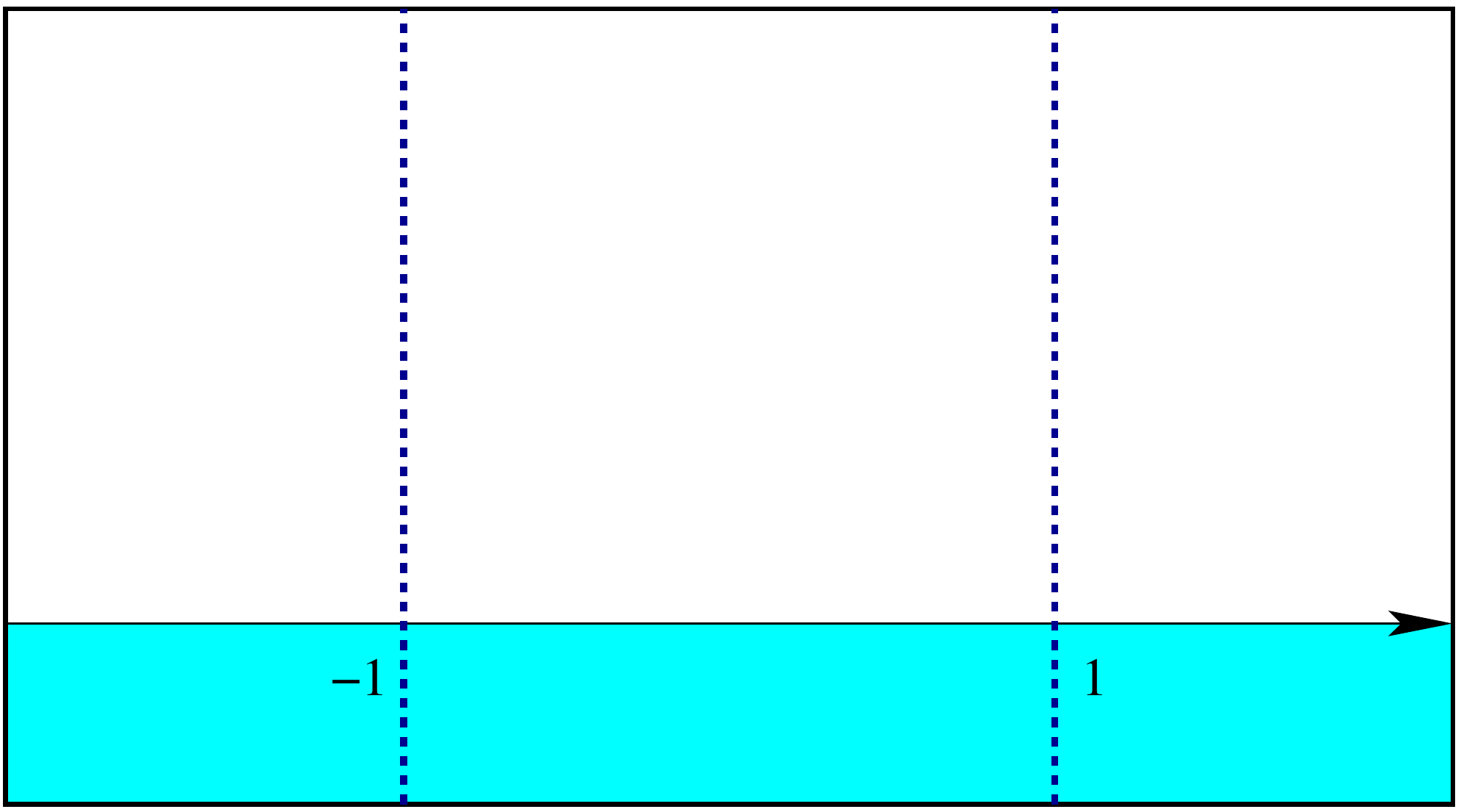}\hspace{0.3cm}
\includegraphics[width=6.9cm]{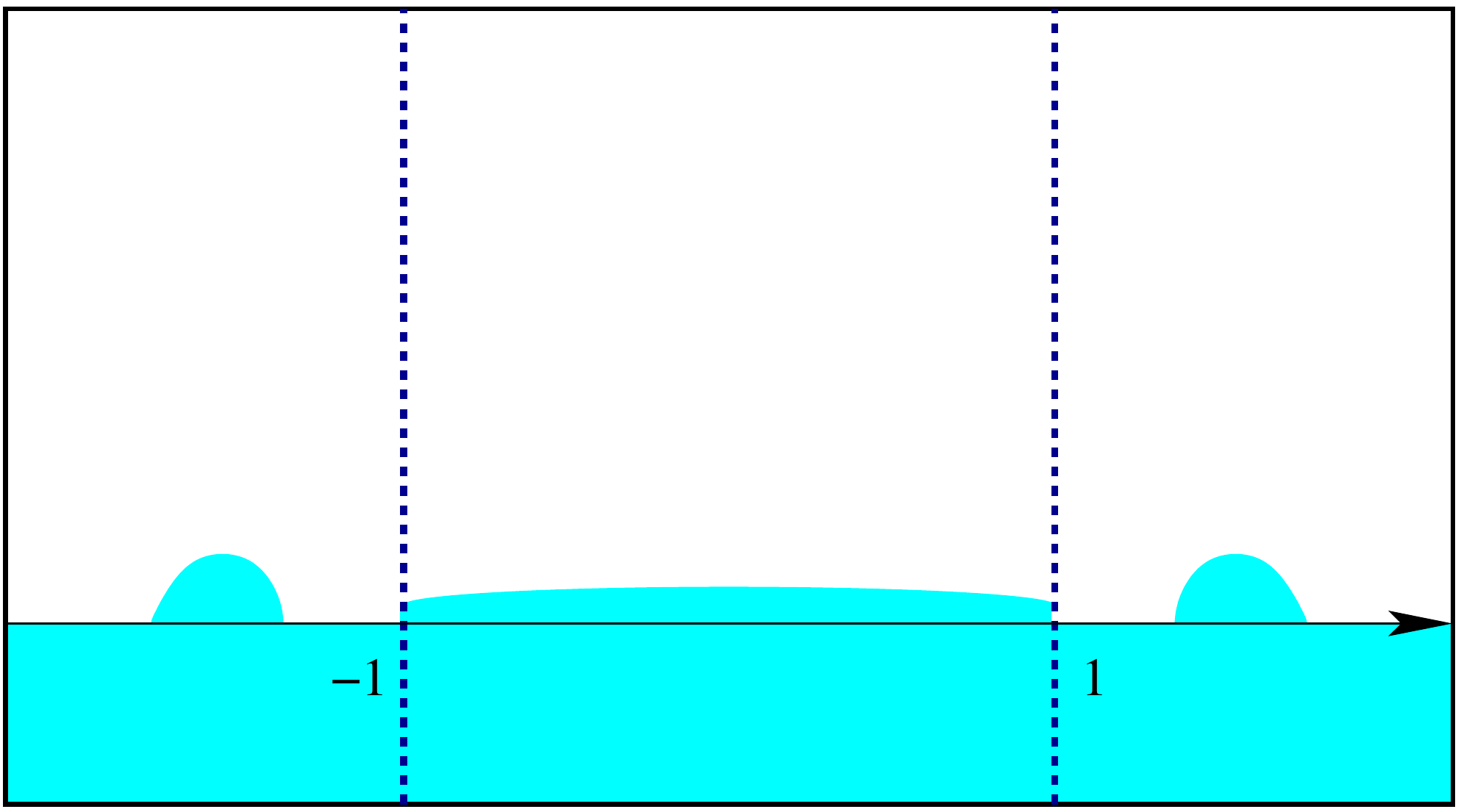}\vspace{0.3cm}
\includegraphics[width=6.9cm]{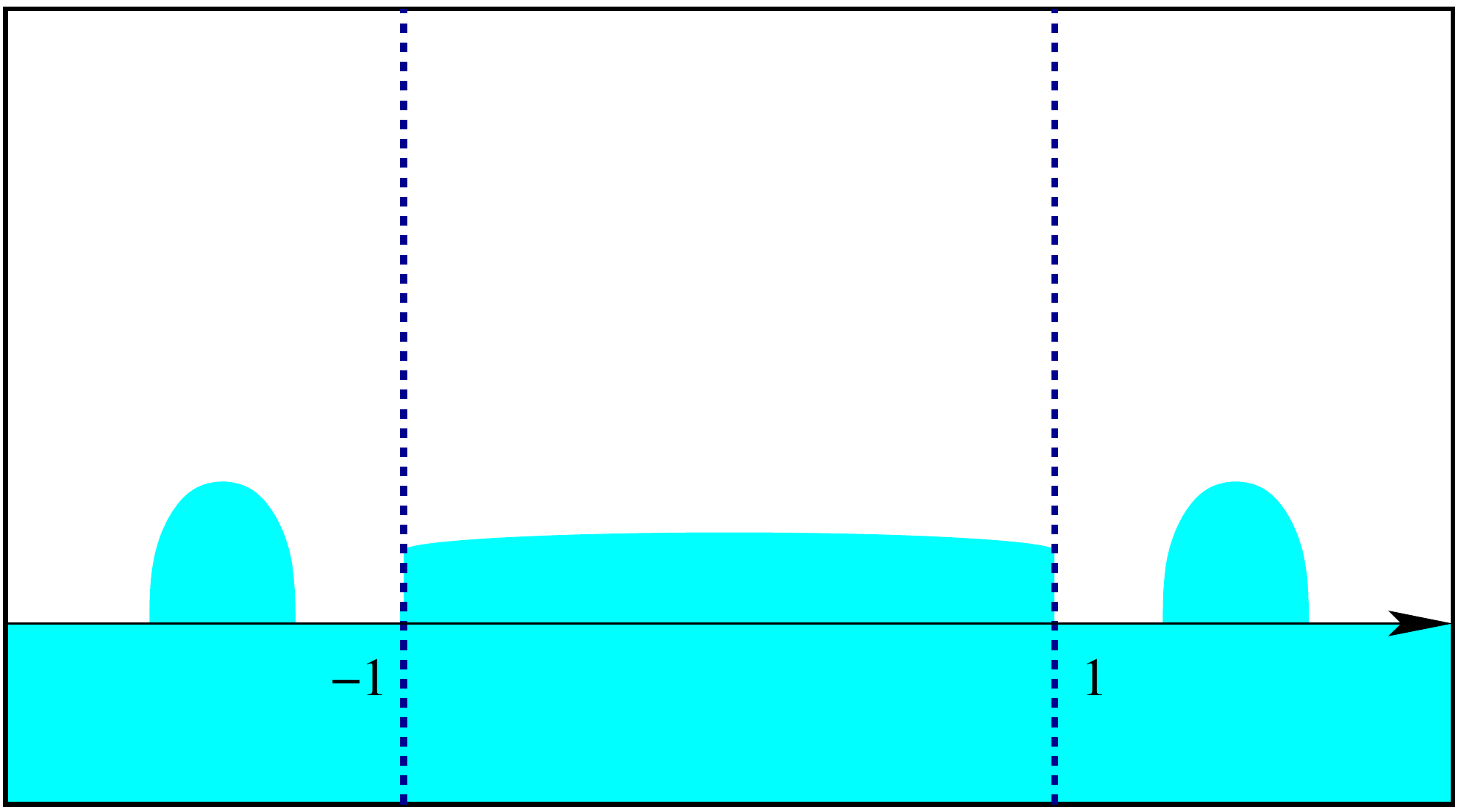}\hspace{0.3cm}
\includegraphics[width=6.9cm]{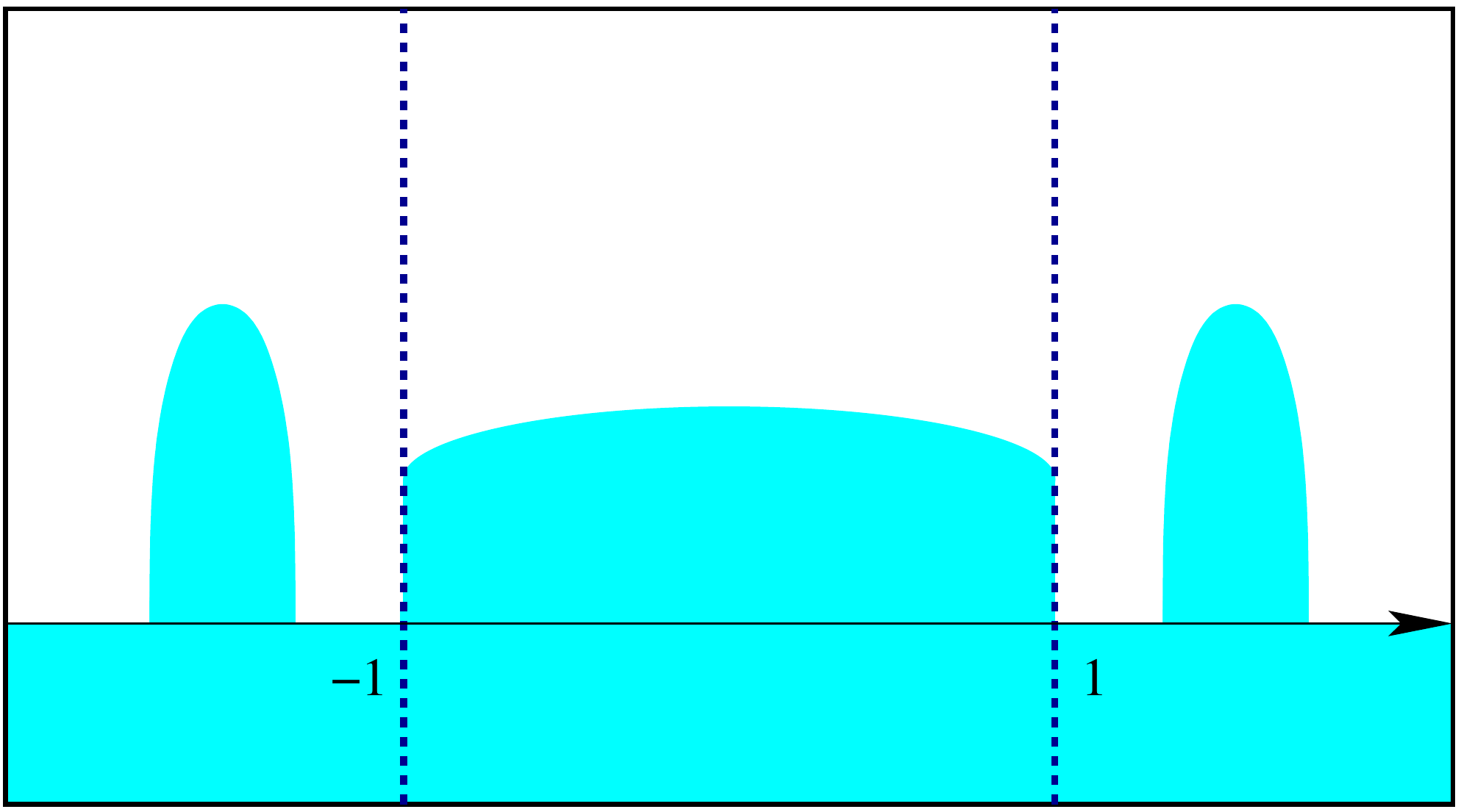}
\caption{\sl Stickiness arising from perturbation of halfplanes,
with the perturbation progressively larger.}
\label{FPER}
\end{figure}

Let us briefly give some further comments on the stickiness
phenomena discussed above. First of all, we would like to
convince the reader (as well as ourselves) that
this type of behaviors indeed occurs in the nonlocal case.

To this end, let us make an investigation to find
how the $s$-minimal set~$E_\alpha$ in~$\Omega:=(-1,1)\times\R\subset\R^2$
with datum
$$ C_\alpha := \{ (x,y)\in\R^2 {\mbox{ s.t. }} y< \alpha|x|\}$$
looks like.

When~$\alpha=0$, then~$E_\alpha=C_\alpha$ is the halfplane,
so the interesting case is when~$\alpha\ne0$; say, up to symmetries,~$\alpha>0$.
Now, we know how an investigation works:
we need to place all the usual suspects in a row
and try to find the culprit.

The line of suspect is on Figure~\ref{SOSPETTI}
(remember that we have to find
the $s$-minimal set among them). Some of the suspects
resemble our prejudices on how the culprit should look like.
For instance, for what we saw on TV,
we have the prejudice that serial killers always wear
black gloves and raincoats. Similarly, for what we learnt
from the hyperplanes, we may have the prejudice that
$s$-minimal surfaces meet the boundary data in a smooth
fashion (this prejudice will turn out to be wrong, as we will see).
In this sense, the usual suspects number~1 and~2
in Figure~\ref{SOSPETTI}
are the ones who look like the serial killers.

\begin{figure}[ht]
{\bf \#1}\includegraphics[width=6.9cm]{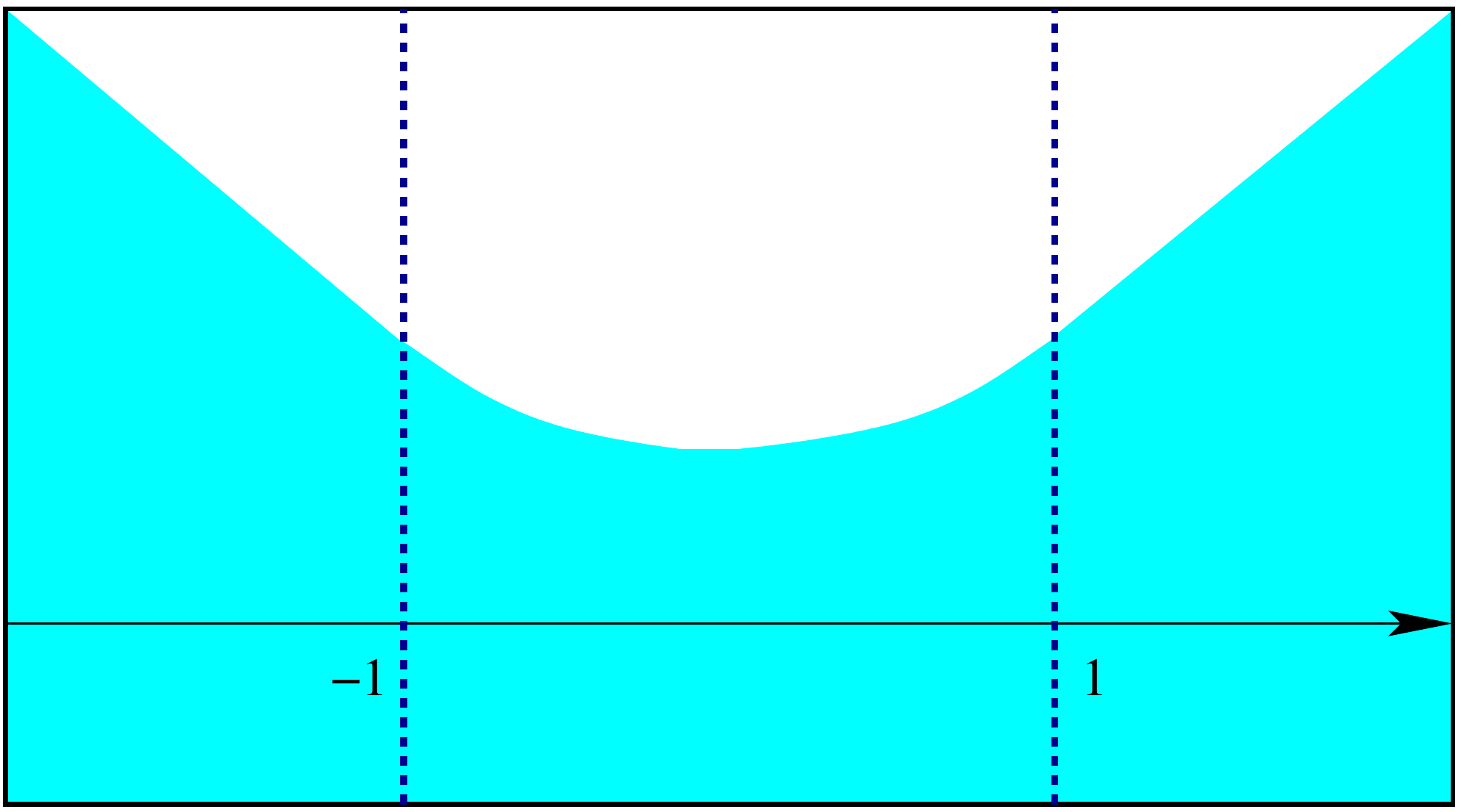}\hspace{0.3cm} 
{\bf \#2}\includegraphics[width=6.9cm]{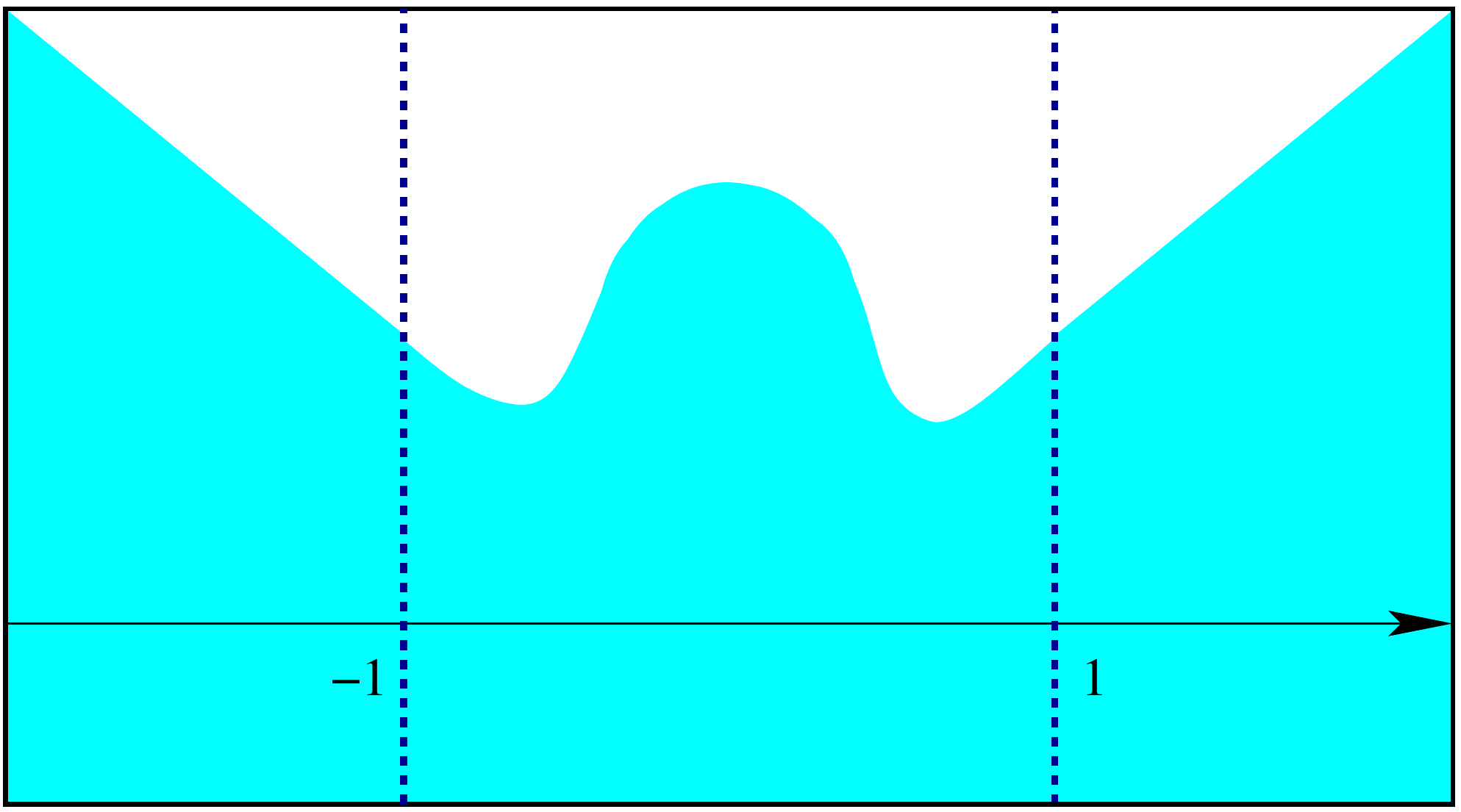}\vspace{0.3cm}
{\bf \#3}\includegraphics[width=6.9cm]{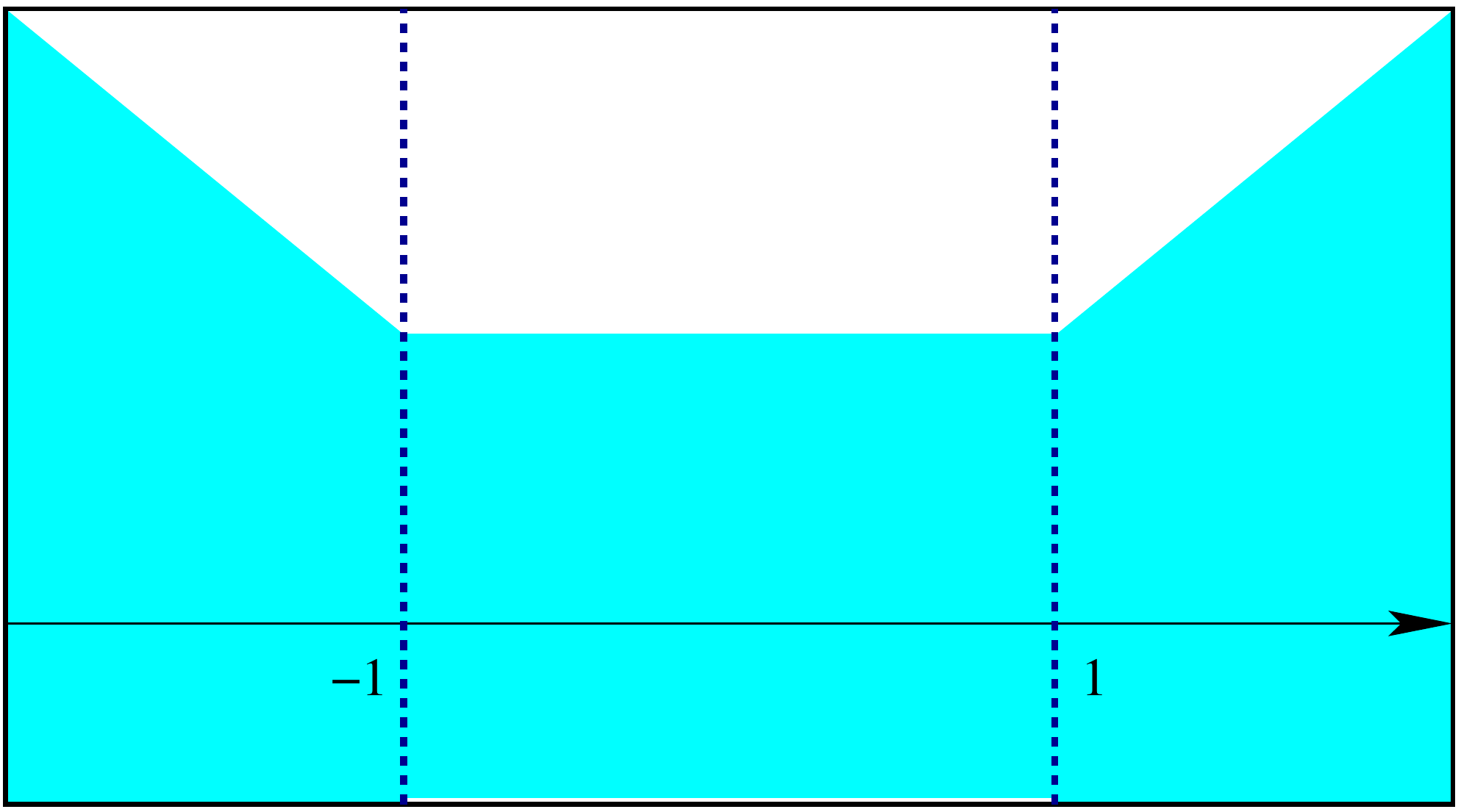}\hspace{0.3cm} 
{\bf \#4}\includegraphics[width=6.9cm]{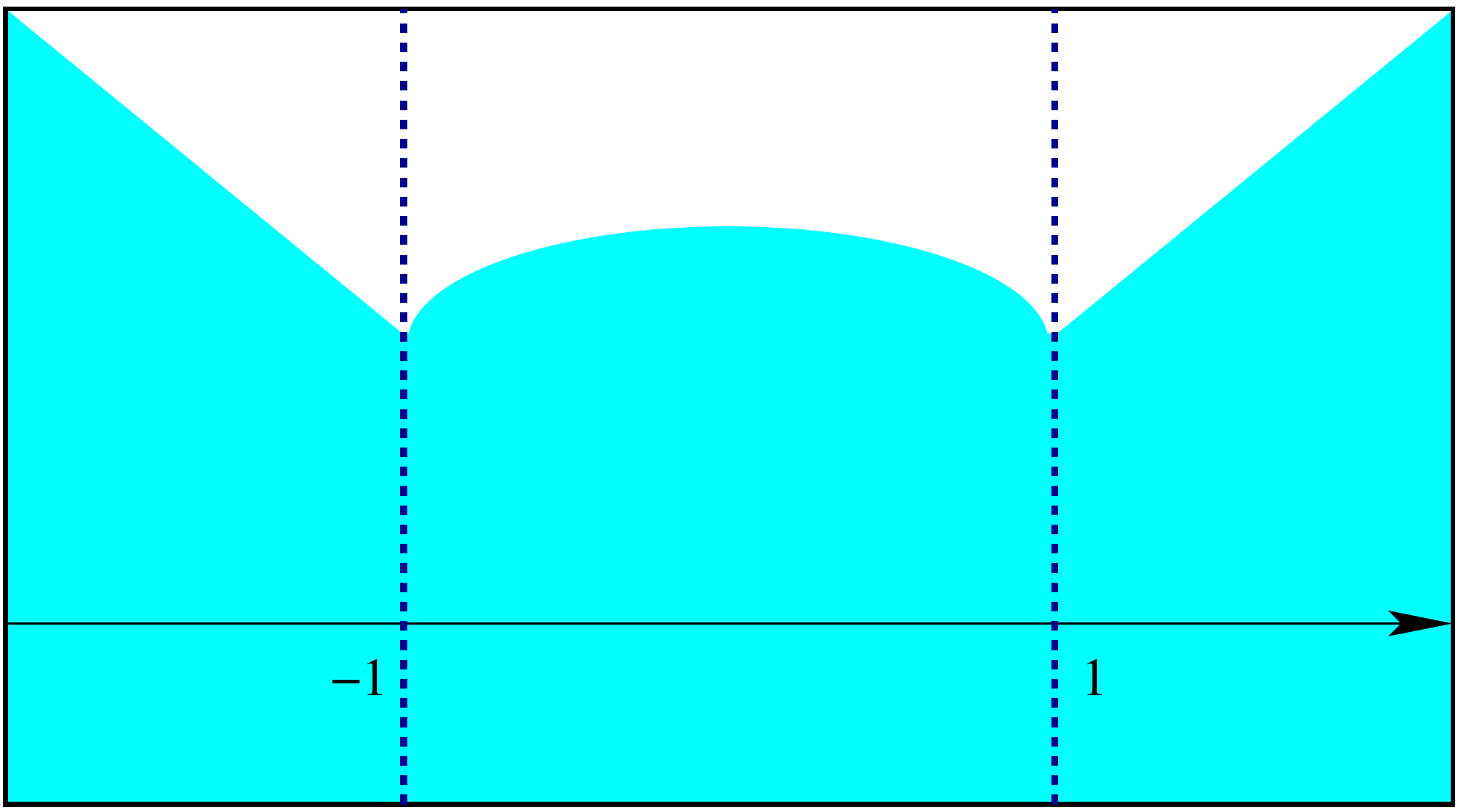}\vspace{0.3cm} 
{\bf \#5}\includegraphics[width=6.9cm]{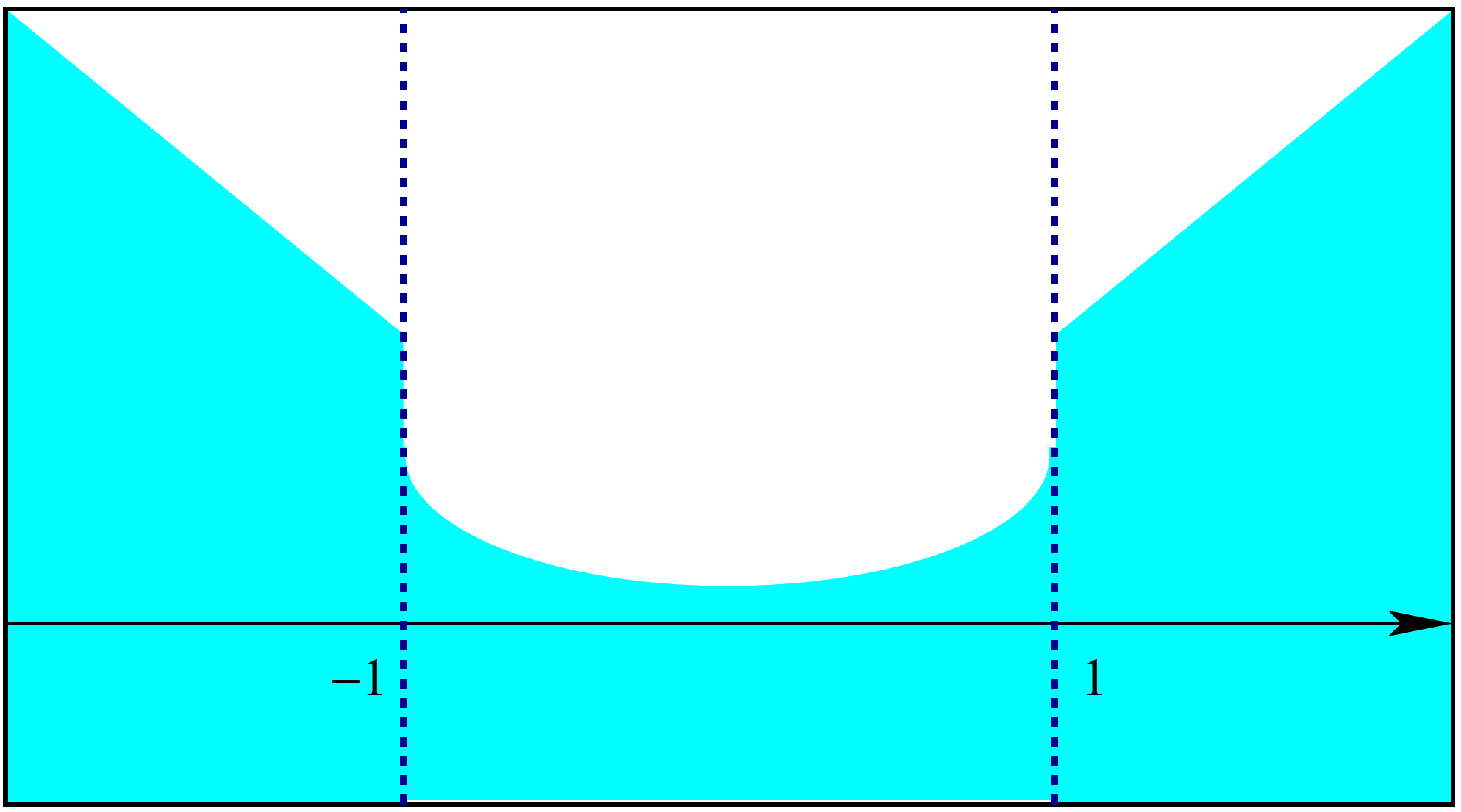}\hspace{0.3cm} 
{\bf \#6}\includegraphics[width=6.9cm]{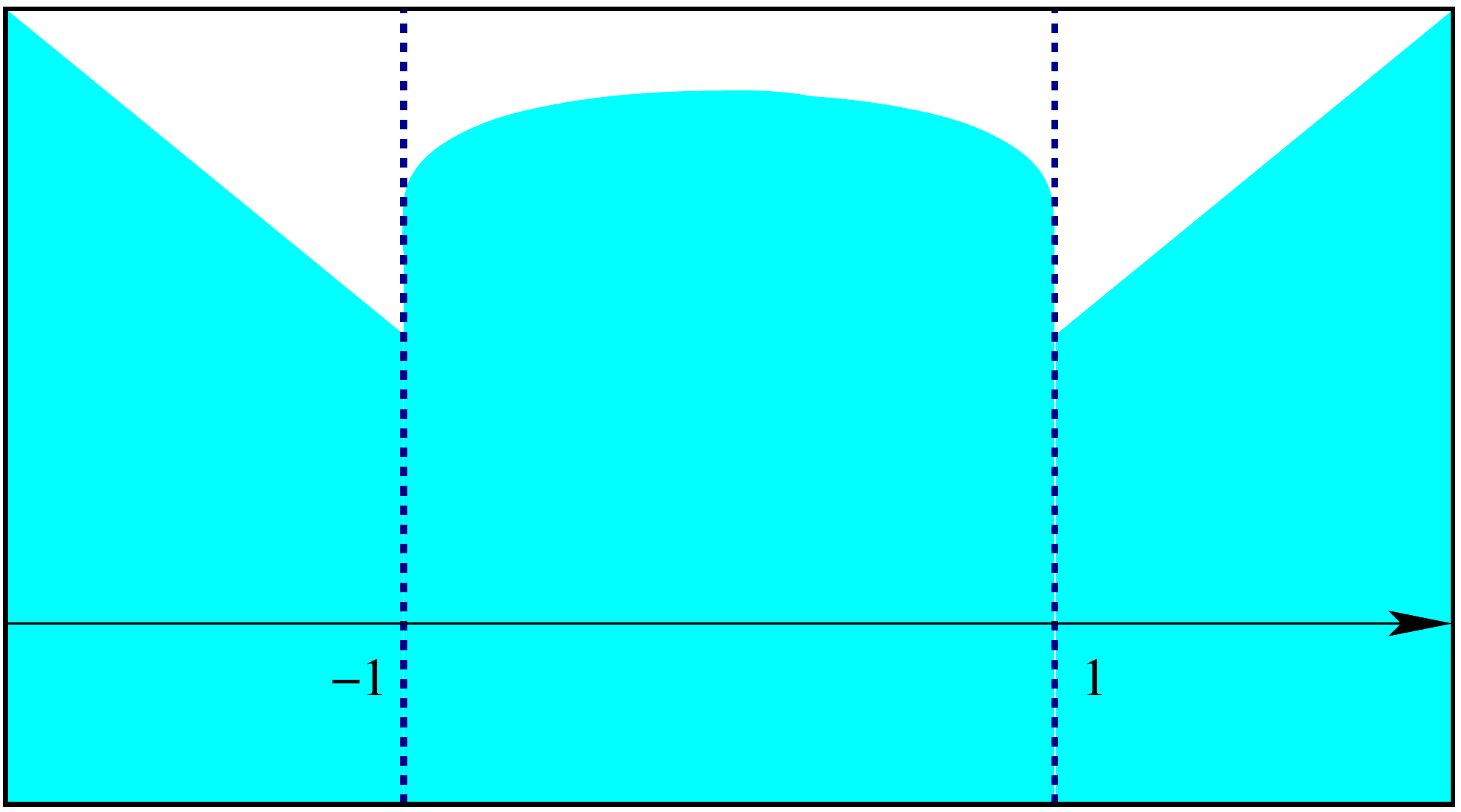}
\caption{\sl Confrontation between the suspects.}
\label{SOSPETTI}
\end{figure}

Then, we have the regular guys with some strange hobbies,
we know from TV that they are also quite plausible candidates
for being guilty; in our analogy, these are the usual suspects
number~3 and~4, which meet the boundary data in a Lipschitz or
H\"older fashion (and one may also observe that number~3
is the minimal set in the local case).\medskip

Then, we have the candidates which look above suspicion,
the ones to which nobody ever consider to be guilty, usually
the postman or the butler. In our analogy, these
are the suspects number~5 and~6, which are discontinuous
at the boundary.\medskip

Now, we know from TV how we should proceed:
if a suspect has a strong and verified alibi, we can rule him or her out
of the list. In our case, an alibi can be offered by
the necessary condition for $s$-minimality
given in Lemma~\ref{iu678HHJKA}. Indeed, if one of our suspects~$E$
does not satisfy that~$H^s_E=0$ along~$(\partial E)\cap\Omega$,
then~$E$ cannot be $s$-minimal and we can cross out~$E$
from our list of suspects ($E$ has an alibi!).

Now, it is easily seen that all the suspects number~1, 2, 3, 4 and~5
have an alibi: indeed, from Figure~\ref{SOSPETTIa} we see that~$H^s_E(p)\ne0$,
since the set~$E$ occupies (in measure, weighted
by the kernel in~\eqref{HS-DEF})
more than a halfplane\footnote{Indeed, in view of~\eqref{HS-DEF},
we know that an $s$-minimal set, seen from
any point of the boundary, satisfies a perfect balance between the
weighted measure of the set itself and the weighted measure
of its complement (here, weighted is intended with
respect to the kernel in~\eqref{HS-DEF}). Since a halfplane
also satisfies such perfect balance when seen from
any point of its boundary (due to odd symmetry),
one can say that
a set is $s$-minimal
when,
at any point of its boundary, 
the weighted contributions of
the set and its complement produce the same result as
the ones of a hyperplane passing through such point.
This geometric trick often allows us to ``subtract
the tangent halfplane'' from a set without modifying its
fractional curvature (and this is often convenient to
observe cancellations).}
passing through~$p$:
in Figure~\ref{SOSPETTIa} the point~$p$ is the big dot
and the halfplane is marked by the line passing through it, so
a quick inspection confirms that the  alibis
of number~1, 2, 3, 4 and~5 check out, hence their nonlocal
mean curvature does not vanish at~$p$ and consequently they are not $s$-minimal sets.

On the other hand, the alibi of number~6 doesn't hold water.
Indeed, near~$p$, the set~$E$ is confined below the horizontal line,
but at infinity the set~$E$ go well beyond such line: these effects
might compensate each other and produce a vanishing
mean curvature.

\begin{figure}[ht]
{\bf \#1}\includegraphics[width=6.9cm]{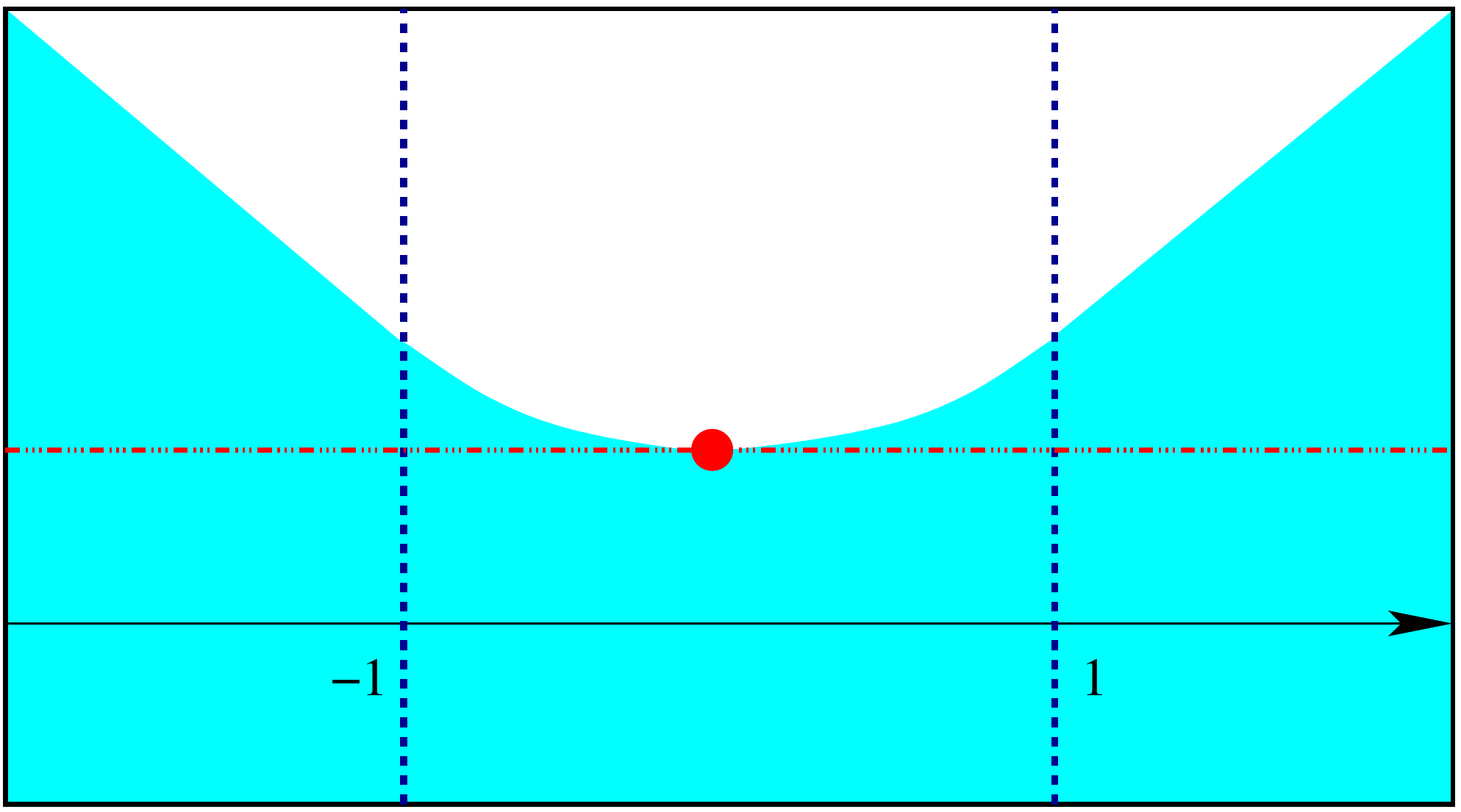}\hspace{0.3cm}
{\bf \#2}\includegraphics[width=6.9cm]{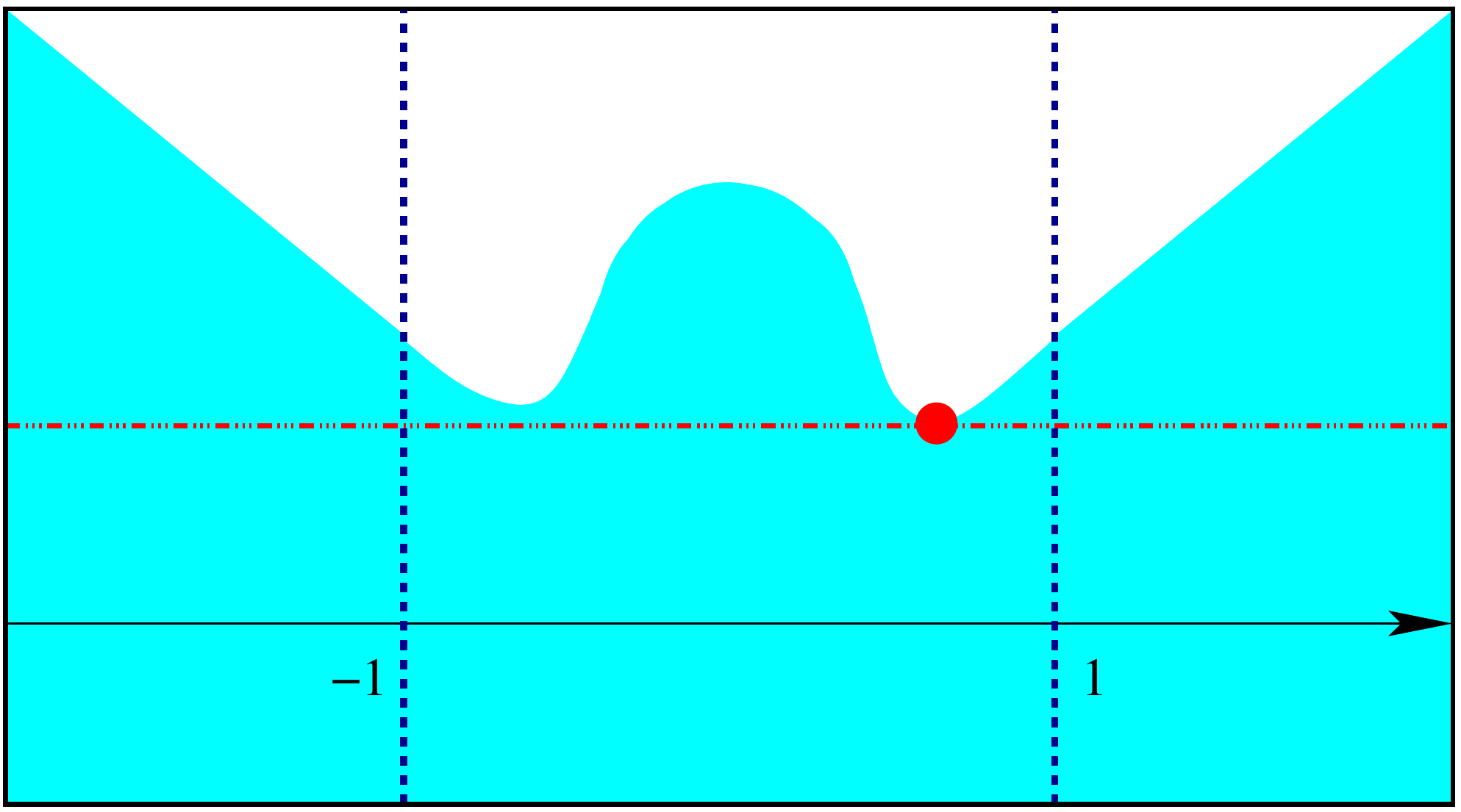}\vspace{0.3cm}
{\bf \#3}\includegraphics[width=6.9cm]{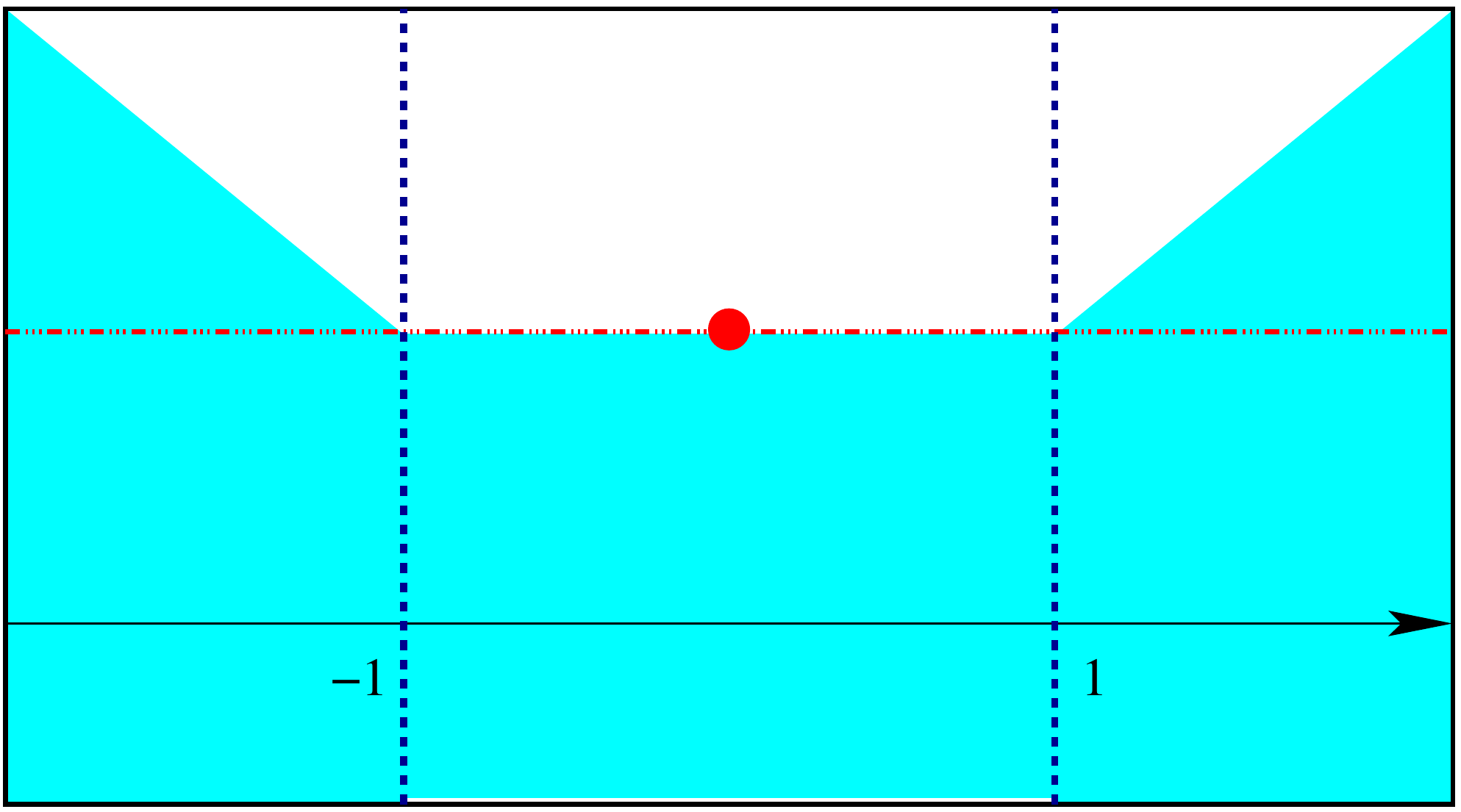}\hspace{0.3cm}
{\bf \#4}\includegraphics[width=6.9cm]{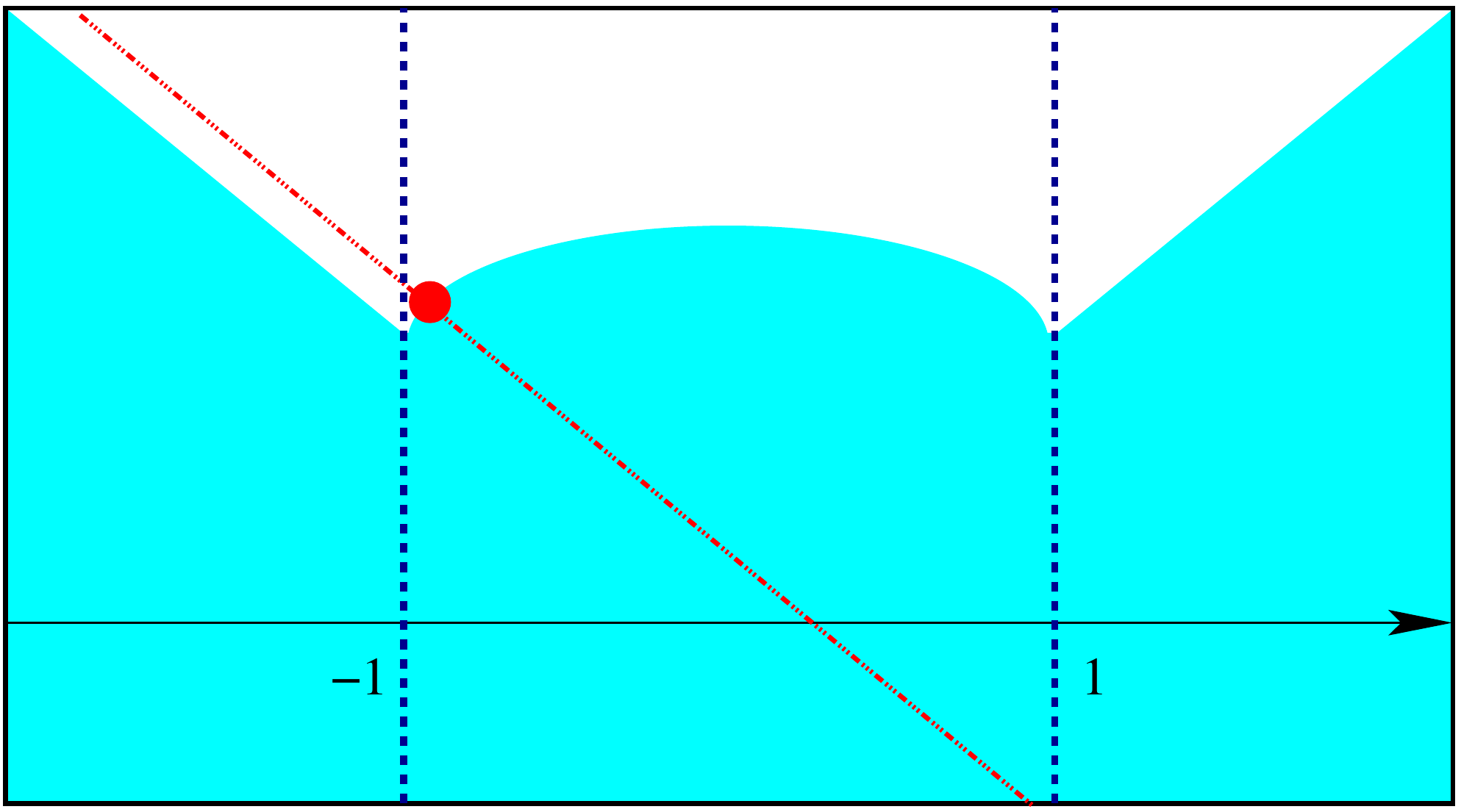}\vspace{0.3cm}
{\bf \#5}\includegraphics[width=6.9cm]{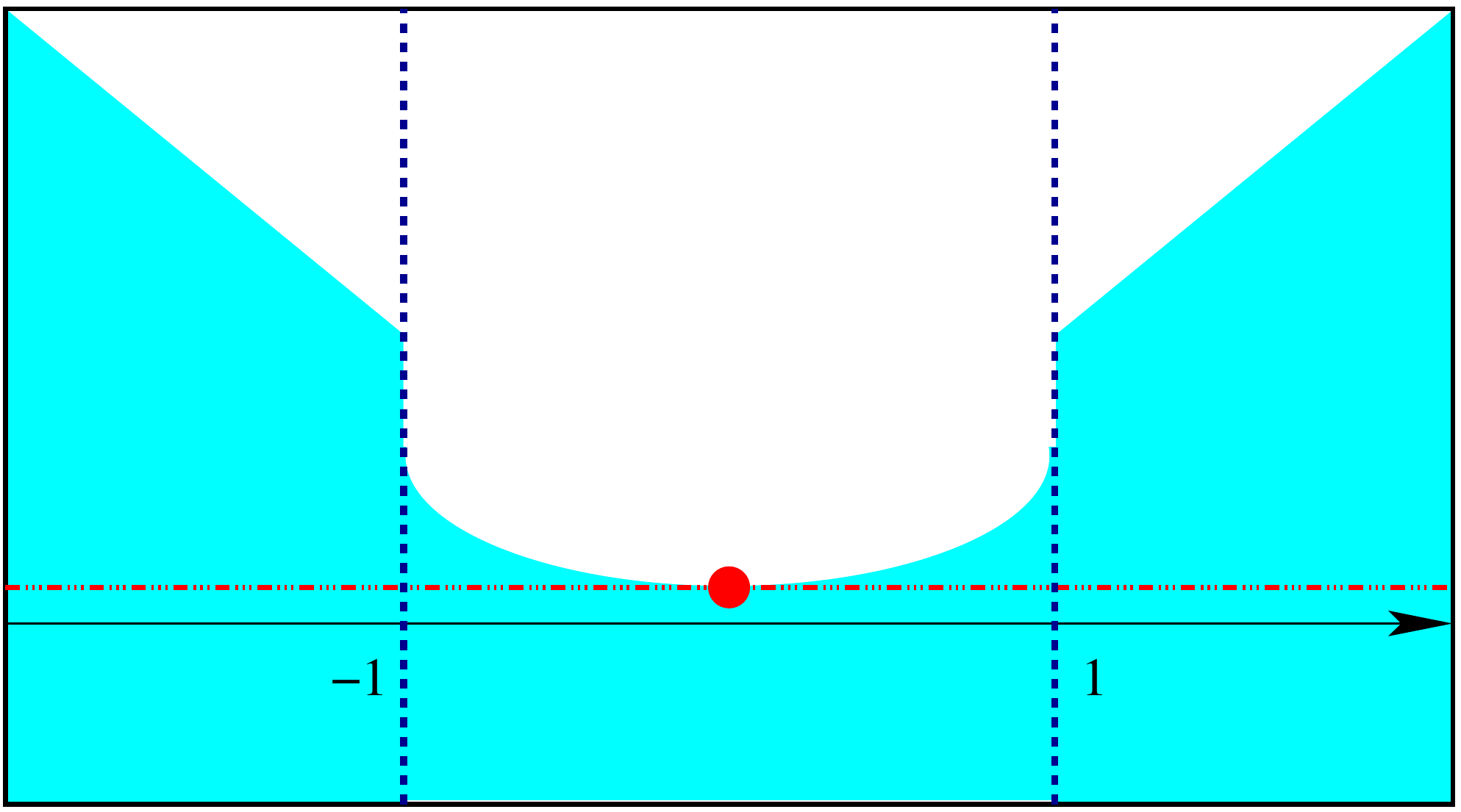}\hspace{0.3cm}
{\bf \#6}\includegraphics[width=6.9cm]{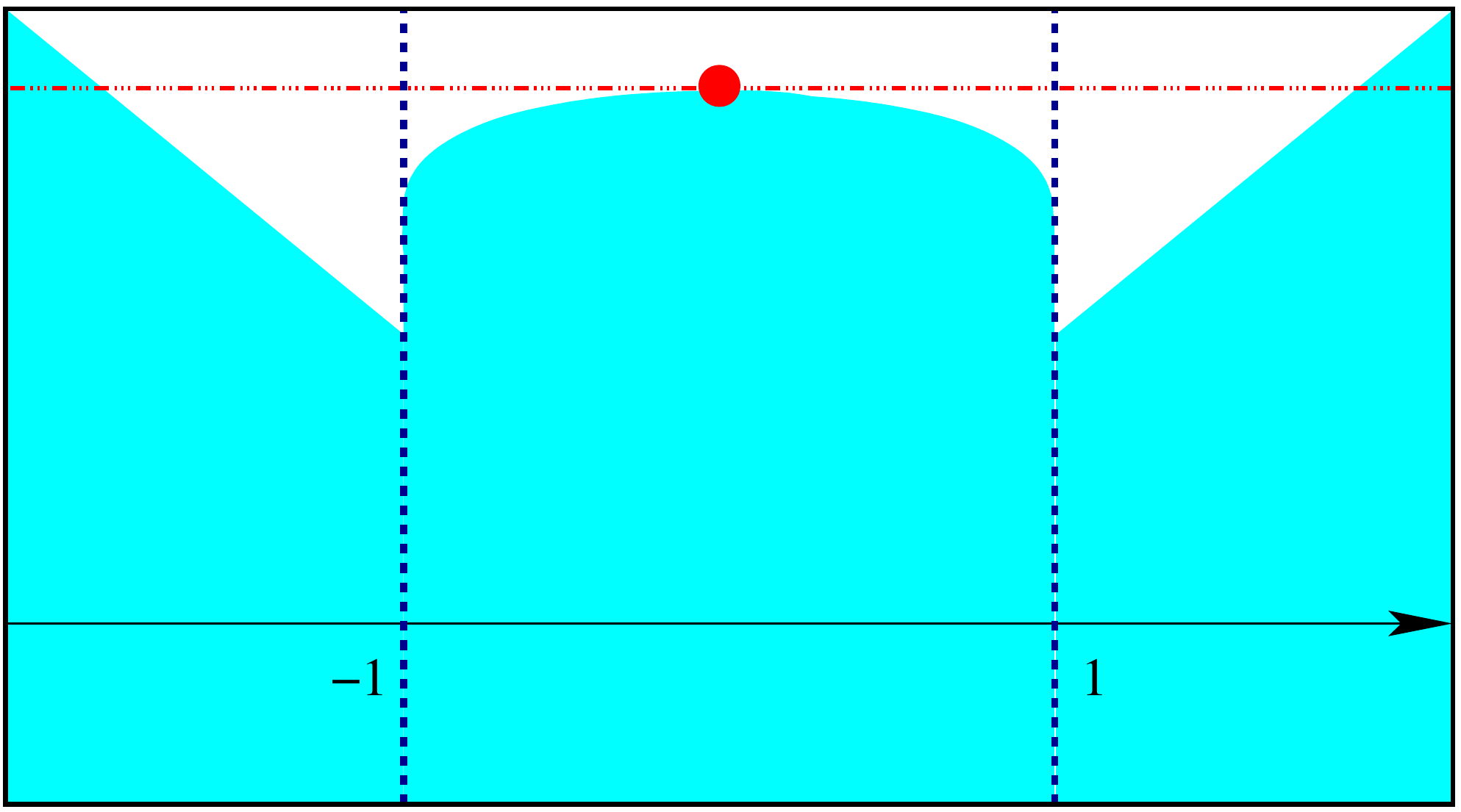}
\caption{\sl The alibis of the suspects.}
\label{SOSPETTIa}
\end{figure}

So, having ruled out all the suspects but number~6,
we have only to remember what the old investigators have taught us
(e.g., ``When you have eliminated the impossible, whatever remains,
however improbable, must be the truth''), to find that
the only possible (though, in principle, rather improbable)
culprit is number~6.\medskip

Of course, once that we know that 
the butler did it, i.e. that number~6 is $s$-minimal,
it is our duty to prove it beyond any reasonable doubt.
Many pieces of evidence, and a complete proof, is
given in~\cite{stick} (where indeed the more general version
given in Theorem~\ref{PER} is established). Here,
we provide some ideas towards the proof of Theorem~\ref{PER}
in Section~\ref{PER:S}.

\medskip

This set of notes is organized as follows. In Section~\ref{iu678HHJKA:PF} we present the proof 
of Lemma~\ref{iu678HHJKA}. Sections~\ref{sec:primo} and~\ref{sec:secondo} 
are devoted to the proofs of the quantitative estimates in Theorems~\ref{B V T} and~\ref{1123THJ}, 
respectively. Then, Section~\ref{PER:S} is dedicated to a sketch of the proof of 
Theorem~\ref{PER}. We also provide Appendix~\ref{APP12}
to discuss briefly the asymptotics of the $s$-perimeter as $s\nearrow1/2$
and as $s\searrow0$ and Appendix~\ref{APP1W} to discuss
the asymptotic expansion of the nonlocal mean curvature as $s\searrow0$.
Finally, in Appendix~\ref{9ojknAAsaw} we discuss the second variation 
of the fractional perimeter functional. 

\section{Proof of Lemma~\ref{iu678HHJKA}}\label{iu678HHJKA:PF}

\begin{proof}[Proof of Lemma~\ref{iu678HHJKA}]
We consider a diffeomorphism~$T_\epsilon(x):=x+\epsilon v(x)$,
with~$v\in C^\infty_0(\Omega,\R^n)$ and we take~$E_\epsilon:=T_\epsilon(E)$.
By minimality, we know that~${\rm Per}_s\,(E_\epsilon,\Omega)\ge
{\rm Per}_s\,(E,\Omega)$ for every~$\epsilon\in (-\epsilon_0,\epsilon_0)$, 
with~$\epsilon_0>0$ sufficiently small, hence
\begin{equation}\label{89iokGHA:IK}
{\rm Per}_s\,(E_\epsilon,\Omega) - {\rm Per}_s\,(E,\Omega) = o(\epsilon). 
\end{equation}
Suppose, for simplicity, that~$I(E\setminus\Omega ,E^c\setminus\Omega)<+\infty$,
so that we can write
\begin{equation*}
{\rm Per}_s\,(E_\epsilon,\Omega) - {\rm Per}_s\,(E,\Omega) = 
I(E_\epsilon,E_\epsilon^c)-I(E,E^c).
\end{equation*}
Moreover, if we use the notation~$X:=T_\epsilon^{-1}(x)$, we have that
$$ dx = |\det DT_\epsilon(X)|\,dX
= \big(1+\epsilon\,{\rm div}\, v(X)+o(\epsilon)\big)\,dX.$$
Similarly, if~$Y:=T_\epsilon^{-1}(y)$, we find that
\begin{eqnarray*}
&& |x-y|^{-n-2s}\\ &=& |T_\epsilon(X)-T_\epsilon(Y)|^{-n-2s}
\\&=&\big|X-Y +\epsilon\big(v(X)-v(Y)\big)|^{-n-2s} \\
&=&\big|X-Y|^{-n-2s} -(n+2s)\,\epsilon\,|X-Y|^{-n-2s-2}
(X-Y)\cdot \big(v(X)-v(Y)\big)+o(\epsilon).
\end{eqnarray*}
As a consequence,
\begin{eqnarray*}
&& {\rm Per}_s\,(E_\epsilon,\Omega) - {\rm Per}_s\,(E,\Omega)\\
&=&
\iint_{E_\epsilon\times E_\epsilon^c} \frac{dx\,dy}{|x-y|^{n+2s}}
-
\iint_{E\times E^c} \frac{dx\,dy}{|x-y|^{n+2s}} \\
&=&
\iint_{E\times E^c} 
\Big[\big|X-Y|^{-n-2s} -(n+2s)\,\epsilon\,|X-Y|^{-n-2s-2}
(X-Y)\cdot \big(v(X)-v(Y)\big)\Big]
\\ &&\qquad\cdot\big(1+\epsilon\,{\rm div}\, v(X)\big)
\big(1+\epsilon\,{\rm div}\, v(Y)\big)\,dX\,dY
\\ &&\qquad-
\iint_{E\times E^c} \frac{dx\,dy}{|x-y|^{n+2s}}
+o(\epsilon)
\\ &=& -(n+2s)\,\epsilon\, \iint_{E\times E^c} \frac{ (x-y)\cdot 
\big(v(x)-v(y)\big) }{|x-y|^{n+2s+2}}\,dx\,dy\\&&\qquad+
\epsilon\, \iint_{E\times E^c} \frac{ {\rm div}\, v(x)+{\rm div}\, v(y)
}{|x-y|^{n+2s}}\,dx\,dy+o(\epsilon).
\end{eqnarray*}
Now we point out that
$$ {\rm div}_x\, \frac{v(x)}{|x-y|^{n+2s}}= -(n+2s)\frac{v(x)\cdot(x-y)}{|x-y|^{n+2s+2}}+
\frac{{\rm div}_x\,v(x)}{|x-y|^{n+2s}}$$
and so, interchanging the names of the variables,
$$ {\rm div}_y\, \frac{v(y)}{|x-y|^{n+2s}}
= (n+2s)\frac{v(y)\cdot(x-y)}{|x-y|^{n+2s+2}}+
\frac{{\rm div}_y\,v(y)}{|x-y|^{n+2s}}
.$$
Consequently,
\begin{eqnarray*}
&&{\rm Per}_s\,(E_\epsilon,\Omega) - {\rm Per}_s\,(E,\Omega)\\
&=& \epsilon\,\iint_{E\times E^c}
\left[{\rm div}_x\, \frac{v(x)}{|x-y|^{n+2s}} +
{\rm div}_y\, \frac{v(y)}{|x-y|^{n+2s}}\right]\,dx\,dy+o(\epsilon)
.\end{eqnarray*}
Now, using
the Divergence Theorem 
and changing the names of the variables
we have that
\begin{eqnarray*}
\iint_{E\times E^c} {\rm div}_x\, 
\frac{v(x)}{|x-y|^{n+2s}}\,dx\,dy
&=&\int_{E^c}\,dy \left[
\int_{\partial E} \frac{v(x)\cdot\nu(x)}{|x-y|^{n+2s}}\,d{\mathcal{H}}^{n-1}(x)
\right]\\
&=& \int_{E^c}\,dx \left[
\int_{\partial E} \frac{v(y)\cdot\nu(y)}{|x-y|^{n+2s}}\,d{\mathcal{H}}^{n-1}(y)
\right]\end{eqnarray*}
and
$$ \iint_{E\times E^c} {\rm div}_y\,
\frac{v(y)}{|x-y|^{n+2s}}\,dx\,dy
=-\int_{E}\,dx \left[
\int_{\partial E} \frac{v(y)\cdot\nu(y)}{|x-y|^{n+2s}}\,d{\mathcal{H}}^{n-1}(y)
\right].$$
Accordingly, we find that
\begin{eqnarray*}
&&{\rm Per}_s\,(E_\epsilon,\Omega) - {\rm Per}_s\,(E,\Omega)\\
&=& \epsilon\,\int_{\partial E} \,d{\mathcal{H}}^{n-1}(y) v(y)\cdot\nu(y)
\left[ \int_{E^c} \frac{dx}{|x-y|^{n+2s}} - \int_{E} \frac{dx}{|x-y|^{n+2s}}
\right] +o(\epsilon)\\
&=& \epsilon\,\int_{\partial E} 
v(y)\cdot\nu(y) \, H_E^s(y)
\,d{\mathcal{H}}^{n-1}(y)+o(\epsilon).
\end{eqnarray*}
Comparing with~\eqref{89iokGHA:IK}, we see that
$$ \int_{\partial E}
v(y)\cdot\nu(y) \, H_E^s(y)
\,d{\mathcal{H}}^{n-1}(y)=0$$
and so, since~$v$ is an arbitrary vector field supported in~$\Omega$,
the desired result
follows.
\end{proof}

\section{Proof of Theorem~\ref{B V T}}\label{sec:primo}

The basic idea goes as follows. 
One uses
the appropriate combination of two general facts: on the one 
hand, one can perturb a given set by a smooth flow and compare the 
energy at time~$ t$ with the one at time~$-t$, thus obtaining a second order 
estimate; on the other hand, the nonlocal interaction always charges a 
mass on points that are sufficiently close, thus providing a natural 
measure for the discrepancy between the original set and its flow. One 
can appropriately combine these two facts with the minimality (or more 
generally, the stability) property of a set. Indeed, by choosing as 
smooth flow a translation near the origin, the above arguments lead to 
an integral estimate of the discrepancy between the set and its 
translations, which in turn implies a perimeter estimate.
\medskip

We now give the details of the proof of Theorem~\ref{B V T}.
To do this, we fix~$R\ge1$, a direction~$v\in S^{n-1}$,
a function~$\varphi\in C^\infty_0(B_{9/10})$
with~$\varphi=1$ in~$B_{3/4}$, and a
small scalar quantity~$t\in\left( -\frac{1}{100},\,\frac{1}{100}\right)$,
and we consider the diffeomorphism~$\Phi^t\in C^\infty_0(B_{9/10})$
given by~$\Phi^t(x):=x + t\varphi(x/R)\,v$.
Notice that
\begin{equation}\label{phi t}
{\mbox{$\Phi^t(x)=x+tv$ for any~$x\in B_{3R/4}$.}}\end{equation} 
We also define~$E_t:=
\Phi^t(E)$.
We have the following useful auxiliary estimates
(that will be used in the proofs of both 
Theorem~\ref{B V T} and Theorem~\ref{1123THJ}):

\begin{lemma}\label{USEFUL LEMMA}
Let~$E$ 
be a minimizer for the $s$-perimeter in~$B_R$. Then
\begin{eqnarray}
&& \label{Yh:002}
{\rm Per}_s\,(E_t,B_{R})+{\rm Per}_s\,(E_{-t},B_{R})
-2{\rm Per}_s\,(E,B_{R}) \le CR^{n-2s-2}\,t^2,\\
&& \label{YH:09}
2 I(E_{t}\setminus E, E\setminus E_{t})\le CR^{n-2s-2}\,t^2,\\
\label{GH:2TXT}&&
\min\Big\{
\big| ( (E+ tv)\setminus E)\cap B_{R/2}\big|,\;
\big| (E\setminus (E+ tv))\cap B_{R/2}\big|
\Big\} \le C\,R^{\frac{n-2s-2}{2}}\,|t|,\end{eqnarray}
and
\begin{equation}\label{O9056}
\min\left\{
\int_{B_{R/2}}
\big( \chi_{E}(x+tv)-\chi_E(x)\big)_+\,dx, \;
\int_{B_{R/2}}
\big( \chi_{E}(x+tv)-\chi_E(x)\big)_-\,dx
\right\}\le C\,R^{\frac{n-2s-2}{2}}\,|t|,
\end{equation}
for some~$C>0$.
\end{lemma}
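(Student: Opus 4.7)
\emph{Overall approach.} My plan is to prove \eqref{Yh:002} by a direct second-order Taylor expansion, then deduce \eqref{YH:09} from \eqref{Yh:002} via the submodularity of the fractional perimeter together with the minimality of $E$, and finally obtain \eqref{GH:2TXT} and \eqref{O9056} from \eqref{YH:09} together with the pure-translation property \eqref{phi t} of $\Phi^t$ on $B_{3R/4}$.

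\emph{Step 1 (the bound \eqref{Yh:002}).} Write $w(X):=\varphi(X/R)\,v$, so that $\Phi^{\pm t}(X)=X\pm tw(X)$ and $\|D^k w\|_{L^\infty}\le C R^{-k}$. Performing the change of variable $x=\Phi^{\pm t}(X)$, $y=\Phi^{\pm t}(Y)$ in the integrals defining ${\rm Per}_s(E_{\pm t},B_R)$, the Jacobian factors expand as $1\pm t\,{\rm div}\,w(X)+\tfrac{t^2}{2}q_\pm(X)+O(t^3)$ with $|q_\pm|\le C/R^2$, and the kernel expands as
\[
|\Phi^{\pm t}(X)-\Phi^{\pm t}(Y)|^{-n-2s} = |X-Y|^{-n-2s}\bigl[1\pm t\,a(X,Y)+t^2\,b(X,Y)+O(t^3)\bigr],
\]
with $a$ odd and $b$ even under $t\leftrightarrow -t$. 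Adding the expansions for $+t$ and $-t$ and subtracting $2{\rm Per}_s(E,B_R)$ cancels all odd-in-$t$ contributions, leaving an $O(t^2)$ remainder in which each term carries at least two factors among $\nabla w,\nabla^2 w$, hence a factor $R^{-2}$. The residual double integral, taken over $(X,Y)\in E\times E^c$ with at least one variable in $B_{9R/10}$, has the natural scaling $O(R^{n-2s})$; multiplying by $t^2/R^2$ gives the desired $CR^{n-2s-2}t^2$.

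\emph{Step 2 (the bound \eqref{YH:09}).} The crucial input is the submodularity identity
\[
{\rm Per}_s(E,B_R)+{\rm Per}_s(F,B_R) = {\rm Per}_s(E\cap F,B_R)+{\rm Per}_s(E\cup F,B_R)+2\,I(E\setminus F,F\setminus E),
\]
valid whenever $E\setminus B_R=F\setminus B_R$; it is verified by decomposing $\R^n$ into the four Venn regions of $E,F$ and noting that the contributions entirely outside $B_R$ cancel. Apply this to $F=E_t$: since $E\cap E_t$ and $E\cup E_t$ also coincide with $E$ outside $B_R$, minimality of $E$ gives ${\rm Per}_s(E,B_R)\le{\rm Per}_s(E\cap E_t,B_R)$ and ${\rm Per}_s(E,B_R)\le{\rm Per}_s(E\cup E_t,B_R)$, so the identity implies
\[
2I(E_t\setminus E,E\setminus E_t)\le{\rm Per}_s(E_t,B_R)-{\rm Per}_s(E,B_R).
\]
Writing the same inequality for $-t$, summing, and invoking \eqref{Yh:002}, each non-negative summand on the left is bounded by $CR^{n-2s-2}t^2$, which is \eqref{YH:09}.

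\emph{Step 3 (the bounds \eqref{GH:2TXT} and \eqref{O9056}), and the main obstacle.} By \eqref{phi t}, $E_t\cap B_{R/2}=(E+tv)\cap B_{R/2}$, so setting $A:=((E+tv)\setminus E)\cap B_{R/2}$ and $B:=(E\setminus(E+tv))\cap B_{R/2}$ and restricting \eqref{YH:09} to pairs in $B_{R/2}\times B_{R/2}$ yields $I(A,B)\le CR^{n-2s-2}t^2$. Moreover \eqref{O9056} is just a rewriting of \eqref{GH:2TXT} via the pointwise identities
\[
\bigl(\chi_E(x+tv)-\chi_E(x)\bigr)_+=\chi_{(E-tv)\setminus E}(x),\qquad \bigl(\chi_E(x+tv)-\chi_E(x)\bigr)_-=\chi_{E\setminus(E-tv)}(x),
\]
together with the invariance of the minimum in \eqref{GH:2TXT} under $t\leftrightarrow -t$. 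The main technical obstacle is the passage from the interaction bound $I(A,B)\le CR^{n-2s-2}t^2$ to the measure bound $\min(|A|,|B|)\le CR^{(n-2s-2)/2}|t|$: the trivial lower bound $I(A,B)\ge R^{-n-2s}|A||B|$ yields only the weaker $\min(|A|,|B|)\le CR^{n-1}|t|$. The improvement to the claimed exponent must exploit the fact that $A$ and $B$ arise from translating $E$ by $tv$, so that for most points of the smaller of $A,B$ one can locate a companion in the other set at distance of order $|t|$; this forces the kernel $|x-y|^{-n-2s}$ to contribute the large factor $|t|^{-n-2s}$ on a set of substantial measure, yielding a refined lower bound on $I(A,B)$ in terms of $\min(|A|,|B|)$ that combines with the upper bound to give \eqref{GH:2TXT}. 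Implementing this companion-point argument rigorously, without any a priori regularity of $\partial E$ and without losing too much to the boundary layer near $\partial B_{R/2}$, is where the technical work lies.
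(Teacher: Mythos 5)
Your Steps~1 and~2 reproduce the paper's argument. In Step~1 the paper first establishes, as a ``general fact'' not using minimality, that
\[
{\rm Per}_s(E_t,B_R)+{\rm Per}_s(E_{-t},B_R)-2{\rm Per}_s(E,B_R)\le \frac{Ct^2}{R^2}\,{\rm Per}_s(E,B_R),
\]
and then invokes minimality \emph{a second time} — comparing $E$ with the competitor $E\cup B_R$ — to bound ${\rm Per}_s(E,B_R)\le I(B_R,B_R^c)\le CR^{n-2s}$. Your remark that the residual double integral ``has the natural scaling $O(R^{n-2s})$'' is precisely this second use of minimality and needs to be made explicit: ${\rm Per}_s(E,B_R)$ is not finite for an arbitrary measurable $E$, so dimensional analysis alone does not give the bound. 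Step~2 is identical to the paper's derivation of \eqref{Yh:003} and the comparison with the two competitors $E\cap E_t$ and $E\cup E_t$.

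In Step~3 you have put your finger on a genuine issue, but the fix you gesture at (a ``companion point'' pairing) is not what the paper does, and I do not believe it can produce the stated exponent. The paper simply writes $I(E_t\setminus E,E\setminus E_t)\ge |A|\,|B|$ with $A:=(E_t\setminus E)\cap B_{R/2}$ and $B:=(E\setminus E_t)\cap B_{R/2}$, asserting that ``the interaction kernel is bounded away from zero in $B_{R/2}$''. As you correctly observe, the honest pointwise bound on $B_{R/2}\times B_{R/2}$ is only $|x-y|^{-n-2s}\ge R^{-n-2s}$, which yields $\min(|A|,|B|)\le CR^{n-1}|t|$; and the inequality $I(A,B)\ge c\,|A|\,|B|$ with $c$ uniform in $R$ genuinely fails once $A$ and $B$ can sit at mutual distance $\sim R$. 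What the displayed computation actually proves — and what the rest of the paper actually uses — is \eqref{GH:2TXT} and \eqref{O9056} with a \emph{fixed} ball, say $B_{1/2}$ or $B_1$, in place of $B_{R/2}$: on a fixed ball the kernel is genuinely bounded below by a dimensional constant, so $|A\cap B_1|\,|B\cap B_1|\le C\,I(E_t\setminus E,E\setminus E_t)\le CR^{n-2s-2}t^2$, giving the exponent $\frac{n-2s-2}{2}$. This fixed-ball version is all that is required downstream: Theorem~\ref{B V T} invokes Lemma~\ref{USEFUL LEMMA} only with $R=1$, and the proof of Theorem~\ref{1123THJ}, after dividing \eqref{O9056} by $t$ and sending $t\to 0$, immediately passes through \eqref{BO:000-II}, which concerns integrals over $B_1$. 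So rather than hunting for a refined pairing argument, you should replace $B_{R/2}$ by a fixed ball in \eqref{GH:2TXT} and \eqref{O9056}; then the elementary kernel bound suffices and the proof closes.
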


\begin{proof} First we observe that
\begin{equation}\label{Yh:001}
{\rm Per}_s\,(E_t,B_{R})+{\rm Per}_s\,(E_{-t},B_{R})
-2{\rm Per}_s\,(E,B_{R}) \le \frac{Ct^2}{R^2} {\rm Per}_s\,(E,B_{R}),
\end{equation}
for some~$C>0$.
This is indeed
a general estimate, which does not use
minimality,
and which follows
by changing variable in the integrals of the fractional perimeter
(and noticing that the linear term in~$t$ simplifies).
We provide some details of the proof of~\eqref{Yh:001}
for the facility of the reader.
To this aim, we observe that
$$ |\det D\Phi^t(X)| = |\det ( {\bf 1} + tR^{-1}
\nabla \varphi(X/R)\otimes v)|
= 1 +tR^{-1} \nabla \varphi(X/R)\cdot v + O(t^2R^{-2}).$$
Moreover, if, for any~$\xi$, $\eta\in\R^n$, we set
$$ g(\xi,\eta):= \frac{\big(\varphi(\xi)-\varphi(\eta)\big)\,v}{|\xi-\eta|},$$
we have that~$g$ is bounded and
\begin{eqnarray*}
|\Phi^t(X)-\Phi^t(Y)| &=&
\left| X-Y+ t\big(\varphi(X/R)-\varphi(Y/R)\big)\,v \right| \\
&=& |X-Y|\, 
\left| \frac{X-Y}{|X-Y|}+ tR^{-1}
\frac{\big(\varphi(X/R)-\varphi(Y/R)\big)\,v}{|(X/R)-(Y/R)|} \right|
\\
&=& |X-Y|\, 
\left| \frac{X-Y}{|X-Y|}+ tR^{-1} g(X/R,Y/R)
\right|.
\end{eqnarray*}
Therefore
\begin{eqnarray*}
|\Phi^t(X)-\Phi^t(Y)|^{-n-2s} &=&
|X-Y|^{-n-2s}\,
\left| \frac{X-Y}{|X-Y|}+ tR^{-1} g(X/R,Y/R)
\right|^{-n-2s}
\\ &=& |X-Y|^{-n-2s} \left(
1-(n+2s) t R^{-1} \frac{X-Y}{|X-Y|}\cdot g(X/R,Y/R)
+O(t^2R^{-2})
\right).\end{eqnarray*}
Now we observe that~$\Phi^t$ is the identity outside~$B_R$
and therefore if~$A\in \{ B_R, B_R^c,\R^n\}$
then~$E_t\cap A = \Phi^t(E\cap A)$.
Accordingly, for any~$A$, $B\in \{ B_R, B_R^c,\R^n\}$, 
a change of variables~$x:=\Phi^t(X)$ and~$y:=\Phi^t(Y)$
gives that
\begin{eqnarray*}&&
I(E_t\cap A, E_t^c\cap B)\\ &=&
\int_{\Phi^t(E\cap A)} \int_{\Phi^t(E\cap B)}
|x-y|^{-n-2s}\,dx\,dy \\
&=& 
\int_{E\cap A} \int_{E\cap B}
|\Phi^t(X)-\Phi^t(Y)|^{-n-2s}\,
|\det D\Phi^t(X)|\,
|\det D\Phi^t(Y)|\,dX\,dY
\\ &=&
\int_{E\cap A} \int_{E\cap B}
|X-Y|^{-n-2s} \left(
1-(n+2s) t R^{-1} \frac{X-Y}{|X-Y|}\cdot g(X/R,Y/R)
+O(t^2R^{-2})
\right) \\
&&\quad\cdot\left(
1 +tR^{-1} \nabla \varphi(X/R)\cdot v + O(t^2R^{-2})
\right)\left(
1 +tR^{-1} \nabla \varphi(Y/R)\cdot v + O(t^2R^{-2})
\right)\,dX\,dY
\\ &=&
\int_{E\cap A} \int_{E\cap B}
|X-Y|^{-n-2s} \left(
1-(n+2s) t R^{-1} \tilde g(X/R,Y/R)
+O(t^2R^{-2}) \right)\,dX\,dY,
\end{eqnarray*}
for a suitable scalar function~$\tilde g$.

Then, replacing $t$ with~$-t$ and summing up,
the linear term in~$t$ simplifies and we obtain
$$ I(E_t\cap A, E_t^c\cap B)+
I(E_{-t}\cap A, E_{-t}^c\cap B)
=
\big(2+O(t^2R^{-2})\big)\int_{E\cap A} \int_{E\cap B}
\frac{dX\,dY}{|X-Y|^{n+2s}}
.$$
This, choosing~$A$ and~$B$ appropriately, establishes~\eqref{Yh:001}.

On the other hand, the $s$-minimality of~$E$ gives that~$
{\rm Per}_s\,(E,B_{R})\le{\rm Per}_s\,(E\cup B_R,B_{R})$,
which, in turn, is bounded from above
by the interaction between~$B_R$
and~$ B_R^c$, namely~$
I(B_R, B_R^c)$,
which is a constant (only depending on~$n$
and~$s$) times~$R^{n-2s}$, due to scale invariance
of the fractional perimeter. 
That is, we have that~${\rm Per}_s\,(E,B_{R})\le CR^{n-2s}$,
for some~$C>0$, and then we can make the right hand side
of~\eqref{Yh:001} uniform in~$E$
and obtain~\eqref{Yh:002},
up to renaming $C>0$.

The next step is to charge mass in a ball.
Namely, one defines~$E_t^\cup := E\cup E_t$ and~$E_t^\cap:=E\cap E_t$.
By counting the interactions of the different sets, one sees that
\begin{equation}\label{Yh:003}
{\rm Per}_s\,(E,B_{R})
+{\rm Per}_s\,(E_t,B_{R})
-{\rm Per}_s\,(E_t^\cup,B_{R})-{\rm Per}_s\,(E_{t}^\cap,B_{R})
= 2 I(E_t\setminus E, E\setminus E_t).
\end{equation}
To check this, one observes indeed that
the set~$E_t\setminus E$ interacts with~$E\setminus E_t$
in the computations of~${\rm Per}_s\,(E,B_{R})$
and~${\rm Per}_s\,(E_t,B_{R})$,
while these two sets do not interact
in the computations of~${\rm Per}_s\,(E_t^\cup,B_{R})$
and~${\rm Per}_s\,(E_{t}^\cap,B_{R})$ (the interactions
of the other sets simplify). This proves~\eqref{Yh:003}.
We remark that,
again, formula~\eqref{Yh:003}
is a general fact and is not based on minimality.
Changing~$t$ with~$-t$, we also obtain from~\eqref{Yh:003} that
\begin{equation*}
{\rm Per}_s\,(E,B_{R})
+{\rm Per}_s\,(E_{-t},B_{R}) 
-{\rm Per}_s\,(E_{-t}^\cup,B_{R})-{\rm Per}_s\,(E_{-t}^\cap,B_{R})
= 2 I(E_{-t}\setminus E, E\setminus E_{-t}).
\end{equation*}
This and~\eqref{Yh:003} give that
\begin{eqnarray*}&&
{\rm Per}_s\,(E_{t},B_{R})+
{\rm Per}_s\,(E_{-t},B_{R}) -2{\rm Per}_s\,(E,B_{R})\\
&=& 
{\rm Per}_s\,(E_{t}^\cup,B_{R})+{\rm Per}_s\,(E_{t}^\cap,B_{R})
+{\rm Per}_s\,(E_{-t}^\cup,B_{R})+{\rm Per}_s\,(E_{-t}^\cap,B_{R})
\\ &&\qquad-4{\rm Per}_s\,(E,B_{R})
+2 I(E_{t}\setminus E, E\setminus E_{t})
+2 I(E_{-t}\setminus E, E\setminus E_{-t})\\
&\ge& 2 I(E_{t}\setminus E, E\setminus E_{t})
+2 I(E_{-t}\setminus E, E\setminus E_{-t}),
\end{eqnarray*}
thanks to the $s$-minimality of~$E$. In particular,
$$ {\rm Per}_s\,(E_{t},B_{R})+
{\rm Per}_s\,(E_{-t},B_{R}) -2{\rm Per}_s\,(E,B_{R})\ge
2 I(E_{t}\setminus E, E\setminus E_{t}).$$
This and~\eqref{Yh:002} imply~\eqref{YH:09}.

Now, the interaction kernel is bounded away from zero in~$B_{R/2}$,
and so
$$ I(E_{t}\setminus E, E\setminus E_{t})\ge
\big| (E_{t}\setminus E)\cap B_{R/2}\big|\cdot
\big| (E\setminus E_t)\cap B_{R/2}\big|.$$
This is again a general fact, not
depending on
minimality.
By plugging this into~\eqref{YH:09}, we conclude that
\begin{eqnarray*} 
CR^{n-2s-2}\,t^2 &\ge& 
\big| (E_{t}\setminus E)\cap B_{R/2}\big|\cdot
\big| (E\setminus E_t)\cap B_{R/2}\big|\\
&\ge& \min\Big\{
\big| (E_{t}\setminus E)\cap B_{R/2}\big|^2,\;
\big| (E\setminus E_t)\cap B_{R/2}\big|^2
\Big\}\end{eqnarray*}
and so, again up to renaming~$C$,
\begin{equation}\label{phi tt}
\min\Big\{
\big| (E_{t}\setminus E)\cap B_{R/2}\big|,\;
\big| (E\setminus E_t)\cap B_{R/2}\big|
\Big\} \le CR^{\frac{n-2s-2}{2}}\,t.\end{equation}
Now, we recall~\eqref{phi t} and we observe that~$E_t\cap B_{R/2}
=(E+tv)\cap B_{R/2}$. Hence, the estimate in~\eqref{phi tt}
becomes
\begin{equation}\label{GH:1}
\min\Big\{
\big| ( (E+tv)\setminus E)\cap B_{R/2}\big|,\;
\big| (E\setminus (E+tv))\cap B_{R/2}\big|
\Big\} \le CR^{\frac{n-2s-2}2}\,t.\end{equation}
Since this is valid for any~$v\in S^{n-1}$,
we may also switch the sign of~$v$ and obtain that
\begin{equation}\label{GH:2}             
\min\Big\{
\big| ( (E-tv)\setminus E)\cap B_{R/2}\big|,\;
\big| (E\setminus (E-tv))\cap B_{R/2}\big|
\Big\} \le CR^{\frac{n-2s-2}2}\,t.\end{equation}
{F}rom~\eqref{GH:1} and~\eqref{GH:2} we obtain~\eqref{GH:2TXT}.

Now we observe that, for any sets~$A$ and~$B$,
\begin{equation}\label{KA:901}
\chi_{A\setminus B}(x) \ge \chi_A(x)-\chi_B(x)
.\end{equation}
Indeed, this formula is clearly true if~$x\in B$,
since in this case the right hand side is nonpositive.
The formula is also true if~$x\in A\setminus B$,
since in this case the left hand side is~$1$ and the
right hand side is less or equal than~$1$.
It remains to consider the case in which~$x\not\in
A\cup B$. 
In this case, $\chi_A(x)=0$,
hence the right hand side is nonpositive, which gives
that~\eqref{KA:901} holds true.

By~\eqref{KA:901},
$$ \chi_{A\setminus B}(x) \ge \big( \chi_A(x)-\chi_B(x)\big)_+. $$
As a consequence,
\begin{eqnarray*}
&& \big| ( (E-tv)\setminus E)\cap B_{R/2}\big|
=\int_{B_{R/2}} \chi_{ (E-tv)\setminus E }(x)\,dx
\\ &&\quad\ge \int_{B_{R/2}}
\big( \chi_{E-tv}(x)-\chi_E(x)\big)_+\,dx
= \int_{B_{R/2}}
\big( \chi_{E}(x+tv)-\chi_E(x)\big)_+\,dx \\
{\mbox{and }}&&
\big| (E\setminus (E-tv))\cap B_{R/2}\big|
=\int_{B_{R/2}} \chi_{ E\setminus (E-tv) }(x)\,dx
\\ &&\quad\ge \int_{B_{R/2}}
\big( \chi_{E}(x)-\chi_{E-tv}(x)\big)_+\,dx
= \int_{B_{R/2}}
\big( \chi_{E}(x)-\chi_E(x+tv)\big)_+\,dx 
\\ &&\quad= \int_{B_{R/2}}
\big( \chi_{E}(x+tv)-\chi_E(x)\big)_-\,dx.
\end{eqnarray*}
This and~\eqref{GH:2} give that
\begin{equation*}
CR^{\frac{n-2s-2}2}\,t\ge 
\min\left\{
\int_{B_{R/2}}
\big( \chi_{E}(x+tv)-\chi_E(x)\big)_+\,dx, \;
\int_{B_{R/2}}
\big( \chi_{E}(x+tv)-\chi_E(x)\big)_-\,dx
\right\},
\end{equation*}
which is~\eqref{O9056}. This ends the proof of Lemma~\ref{USEFUL LEMMA}.\end{proof}

With the preliminary work done in
Lemma~\ref{USEFUL LEMMA} (to be used here
with~$R=1$), we can now complete the proof
of Theorem~\ref{B V T}. To this end, we observe that
\begin{equation}\label{hj987654rgKA:1}
\begin{split}
& \left|\int_{B_{1/2}}
\big( \chi_{E}(x+tv)-\chi_E(x)\big)_+\,dx -
\int_{B_{1/2}}
\big( \chi_{E}(x+tv)-\chi_E(x)\big)_-\,dx\right|
\\ =\;& \left|\int_{B_{1/2}}
\big( \chi_{E}(x+tv)-\chi_E(x)\big)\,dx\right| \\=\;&\left|
\int_{B_{1/2}-tv} \chi_{E}(x)\,dx-
\int_{B_{1/2}}\chi_E(x)\,dx \right| \\ \le\;&
\big| (B_{1/2}-tv)\Delta B_{1/2}\big|\\ 
\le\;& C\,t,
\end{split}\end{equation}
for some~$C>0$.

Also, we observe that, for any~$a$, $b\in\R$,
\begin{equation}\label{hj987654rgKA:2}
a+b \le |a-b| + 2\min\{a,b\}
.\end{equation}
Indeed, up to exchanging~$a$ and~$b$, we may suppose that~$a\ge b$; thus
$$ a+b = a-b +2b = |a-b| + 2\min\{a,b\},$$
which proves~\eqref{hj987654rgKA:2}.

Using~\eqref{O9056}, \eqref{hj987654rgKA:1} and~\eqref{hj987654rgKA:2},
we obtain that
\begin{eqnarray*}
&& \int_{B_{1/2}}
\big| \chi_{E}(x+tv)-\chi_E(x)\big|\,dx\\
&=& \int_{B_{1/2}}
\big( \chi_{E}(x+tv)-\chi_E(x)\big)_+\,dx +
\int_{B_{1/2}}
\big( \chi_{E}(x+tv)-\chi_E(x)\big)_-\,dx \\ &\le&\left|
\int_{B_{1/2}}
\big( \chi_{E}(x+tv)-\chi_E(x)\big)_+\,dx -
\int_{B_{1/2}}
\big( \chi_{E}(x+tv)-\chi_E(x)\big)_-\,dx\right|\\ &&\quad+
2\min\left\{
\int_{B_{1/2}}
\big( \chi_{E}(x+tv)-\chi_E(x)\big)_+\,dx ,\;
\int_{B_{1/2}}
\big( \chi_{E}(x+tv)-\chi_E(x)\big)_-\,dx\right\}
\\ &\le& Ct,
\end{eqnarray*}
up to renaming~$C$. Dividing by~$t$ and sending~$t\searrow0$
(up to subsequences), one finds that
$$ \int_{B_{1/2}} |\partial_v \chi_E(x)|\,dx\le C,$$
for any~$v\in S^{n-1}$, in the bounded variation sense.
Since the direction~$v$ is arbitrary,
this proves that
$$ {\rm Per}\,(E,B_{1/2})= \int_{B_{1/2}} |\nabla \chi_E(x)|\,dx
\le C.$$
This proves Theorem~\ref{B V T} with~$R=1$,
and the general case follows from scaling.

\section{Proof of Theorem \ref{1123THJ}}\label{sec:secondo}

In this part, we will make use
of some integral geometric formulas which compute the perimeter
of a set by averaging the number of intersections
of straight lines with the boundary of a set.

For this,
we recall the notation of the positive and negative
part of a function~$u$, namely
$$ u_+(x):= \max\{ u(x),\;0\} \qquad{\mbox{
and }}\qquad u_-(x):= \max\{ -u(x),\;0\}.$$
Notice that~$u_\pm\ge0$, that~$|u|=u_+ + u_-$ and that~$u=u_+ - u_-$.

Also, if~$v\in \partial B_1$ and~$p\in\R^n$, we define
\begin{eqnarray*}
&& v^\perp := \{ y\in\R^n {\mbox{ s.t. }} y\cdot v=0 \}\\
{\mbox{and }}
&& p+\R v:= \{ p+tv {\mbox{ s.t. }} t\in\R\}.
\end{eqnarray*}
That is, $v^\perp$ is the orthogonal linear space to~$v$
and~$p+\R v$ is the line passing through~$p$ with direction~$v$.

Now, given a Caccioppoli set~$E\subseteq\R^n$ with exterior normal~$\nu$
(and reduced boundary denoted by~$\partial^* E$),
and~$v\in \partial B_1$, we set
%% \begin{equation}\label{JK:001:BIS}
%% \int_{B_1\cap(\partial^* E)} |v\cdot\nu(x)|\,d{\mathcal{H}}^{n-1}(x)=
%% \int_{y\in v^\perp} {\mathcal{H}}^0 \big( B_1\cap (\partial^* E)\cap (y+\R v)\big)
%% \,d{\mathcal{H}}^{n-1}(y)
%% \end{equation}
%% and
%% \begin{equation}\label{JK:001}
%% \int_{B_1\cap(\partial^* E)} (v\cdot\nu(x))_\pm\,d{\mathcal{H}}^{n-1}(x)=
%% \int_{y\in v^\perp} I_{v,\pm}(y)\,d{\mathcal{H}}^{n-1}(y)
%% ,\end{equation}
%% where
\begin{equation}\label{JK:002}
I_{v,\pm}(y) := \sup \mp
\int_{y+\R v} \chi_E(x)\, \phi'(x)
\,d{\mathcal{H}}^1(x),
\end{equation}
with the~$\sup$ taken
over all smooth~$\phi$ supported in
the segment~$B_1\cap( {y+\R v} )$ with image in~$[0,1]$.
%% See for instance Theorem~1 in~\cite{MR758909}
%% for this and other formulas in integral geometry.
We have (see e.g. Proposition~4.4 in~\cite{joaq}) that
one can compute the directional
derivative in the sense of bounded variation by the formula
\begin{equation} \label{PAKJ087}
\int_{B_1} (\partial_v \chi_E)_\pm (x)\,dx
=\int_{y\in v^\perp} I_{v,\pm}(y)\,d{\mathcal{H}}^{n-1}(y)\end{equation}
and we also have that~$I_{v,\pm}(y)$ is the number of
points~$x$ that lie in~$B_1\cap (\partial^* E)\cap (y+\R v)$
and such that~$\mp v\cdot\nu(x)>0$.
That is, the quantity~$I_{v,+}(y)$ (resp.,~$I_{v,-}(y)$)
counts the number of intersections in the ball~$B_1$
between the line~$y+\R v$ and the (reduced) boundary of~$E$
that occur at points~$x$ in which~$v\cdot\nu(x)$ is negative
(resp., positive).
In particular,
\begin{equation}\label{JK:003}
I_{v,\pm}(y) \in \Z\cap [0,+\infty) =\{0,1,2,3,\dots\}.
\end{equation}
Furthermore, the vanishing of~$I_{v,+}(y)$ (resp., $I_{v,-}(y)$)
is related
to the fact that, moving along the segment~$B_1\cap( y+\R v )$,
one can only exit (resp., enter) the set~$E$, according
to the following result:

\begin{lemma}\label{NONE}
If~$I_{v,+}(y)=0$, then the map~$ B_1\cap( y+\R v )\ni x\mapsto
\chi_E(x)$ is nonincreasing.
\end{lemma}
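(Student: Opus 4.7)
The plan is to reduce Lemma~\ref{NONE} to a one-dimensional statement about the distributional derivative of a characteristic function along the line. Parametrize the line by $t\mapsto\gamma(t):=y+tv$, so that the open interval $I:=\{t\in\R:\gamma(t)\in B_1\}$ is the parameter range corresponding to $B_1\cap(y+\R v)$. Setting $f(t):=\chi_E(\gamma(t))$, the definition~\eqref{JK:002} specializes to
$$I_{v,+}(y)=\sup_{\phi\in C^\infty_c(I;[0,1])}\left(-\int_I f(t)\,\phi'(t)\,dt\right),$$
so the hypothesis $I_{v,+}(y)=0$ translates into the inequality $-\int_I f\,\phi'\,dt\le 0$ for every admissible $\phi$.

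By a positive rescaling, any non-negative $\phi\in C^\infty_c(I)$ can be normalized to take values in $[0,1]$, so the inequality extends to every such $\phi\ge0$: that is, $\langle f',\phi\rangle:=-\int_I f\,\phi'\,dt\le 0$ for all non-negative test functions. Equivalently, the distributional derivative $f'$ is a non-positive distribution on~$I$. By the classical Schwartz representation theorem for distributions of definite sign, $f'$ must then be a non-positive Radon measure on~$I$.

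To conclude, I would invoke the standard fact that a bounded measurable function on an interval whose distributional derivative is a non-positive measure admits a nonincreasing representative. One verifies this by mollifying $f$ at scale $\rho>0$: the mollifications $f_\rho$ are smooth and have non-positive classical derivative, hence are nonincreasing, and the a.e.\ limit as $\rho\searrow0$ yields a nonincreasing representative of $f$. Applied here, this shows that $t\mapsto\chi_E(y+tv)$ is nonincreasing on $I$, which is exactly the content of Lemma~\ref{NONE}.

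The main subtlety is the passage from the restricted class of test functions $\phi\in C^\infty_c(I;[0,1])$ appearing in~\eqref{JK:002} to arbitrary non-negative $\phi$ (a clean rescaling argument), together with the identification of a non-positive distribution with a genuine Radon measure so that monotonicity of $f$ can be deduced rigorously rather than merely formally. Everything else is a routine one-dimensional BV computation.
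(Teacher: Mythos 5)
Your proposal is correct and follows essentially the same approach as the paper: the paper simply observes that $\int \chi_E\,\phi'\ge 0$ for every admissible $\phi$ and states that this ``gives the desired result,'' implicitly relying on exactly the distribution-theory chain you spell out (nonpositive distributional derivative $\Rightarrow$ nonpositive Radon measure $\Rightarrow$ nonincreasing representative via mollification). Your version makes explicit the rescaling from $\phi\in C^\infty_c(I;[0,1])$ to arbitrary nonnegative test functions and the passage to a monotone representative, which the paper leaves to the reader.
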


\begin{proof} For any
smooth~$\phi$ supported in
the segment~$B_1\cap( {y+\R v} )$ with image in~$[0,1]$,
$$ 0=I_{v,+}(y)\ge
-\int_{y+\R v} \chi_E(x)\, \phi'(x)
\,d{\mathcal{H}}^1(x),$$
that is
$$ \int_{y+\R v} \chi_E(x)\, \phi'(x)
\,d{\mathcal{H}}^1(x)\ge0,$$
which gives the desired result.
\end{proof}

Now we define
\begin{equation}\label{BO:000}
\Phi_\pm (v):=\int_{y\in v^\perp} I_{v,\pm}(y)\,d{\mathcal{H}}^{n-1}(y)
.\end{equation}
By~\eqref{PAKJ087},
\begin{equation}\label{BO:000-II}
\Phi_\pm (v)=
\int_{B_1} (\partial_v \chi_E)_\pm (x)\,dx.
\end{equation}

We observe that

\begin{lemma}\label{12345888LAJ}
Let~${\rm Per}\,(E,B_1)<+\infty$ and~$n\ge2$.
Then the functions~$\Phi_\pm$ are continuous on~$S^{n-1}$.
Moreover, there exists~$v_\star$ such that
\begin{equation}\label{vstar}
\Phi_+(v_\star)=\Phi_-(v_\star).\end{equation}\end{lemma}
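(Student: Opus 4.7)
The plan is to rewrite $\Phi_\pm(v)$ as an $\mathcal{H}^{n-1}$-integral of a Lipschitz function of $v$ over $B_1 \cap \partial^* E$, deduce continuity from dominated convergence, and find $v_\star$ by an odd-symmetry argument that uses the hypothesis $n \geq 2$. Since ${\rm Per}(E,B_1) < +\infty$, the distributional gradient of $\chi_E$ in $B_1$ is representable on the reduced boundary $\partial^* E$ by $D\chi_E = -\nu\, d\mathcal{H}^{n-1}$, with $\nu$ the measure-theoretic outer normal. Combining this with~\eqref{BO:000-II} (and writing the positive/negative parts of the signed measure $\partial_v \chi_E = -(v\cdot\nu)\, d\mathcal{H}^{n-1}$) gives the identities
$$\Phi_+(v) = \int_{B_1 \cap \partial^* E} (v\cdot\nu(x))_-\,d\mathcal{H}^{n-1}(x), \qquad \Phi_-(v) = \int_{B_1 \cap \partial^* E} (v\cdot\nu(x))_+\,d\mathcal{H}^{n-1}(x),$$
which are consistent with the geometric interpretation of $I_{v,\pm}$ recalled just after~\eqref{PAKJ087}.

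For continuity, I would observe that, for each fixed $x \in \partial^* E$, the maps $v \mapsto (v\cdot\nu(x))_\pm$ are $1$-Lipschitz and bounded by $1$, and the dominating constant $1$ is integrable on $B_1 \cap \partial^* E$ because this set has finite $\mathcal{H}^{n-1}$-measure, equal to ${\rm Per}(E,B_1)$. Dominated convergence then shows that $\Phi_\pm$ are continuous (indeed, Lipschitz) on $S^{n-1}$.

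For~\eqref{vstar}, set $\Psi(v) := \Phi_+(v) - \Phi_-(v)$. Subtracting the two representations above gives
$$\Psi(v) = -\int_{B_1 \cap \partial^* E} v\cdot\nu(x)\,d\mathcal{H}^{n-1}(x) = -v \cdot W, \qquad W := \int_{B_1 \cap \partial^* E} \nu(x)\,d\mathcal{H}^{n-1}(x),$$
so $\Psi$ is the restriction to $S^{n-1}$ of a linear functional of $v$. If $W = 0$, any $v_\star \in S^{n-1}$ works; if $W \neq 0$, then, since $n \geq 2$, the hyperplane $W^\perp$ meets $S^{n-1}$, and any $v_\star$ in this intersection satisfies $\Psi(v_\star) = 0$. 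Alternatively, $\Psi$ is continuous and odd (it satisfies $\Psi(-v)=-\Psi(v)$) on the connected sphere $S^{n-1}$, so a direct intermediate-value argument produces $v_\star$ as well. The only real obstacle is the bookkeeping of matching the sign conventions between~\eqref{JK:002} and the BV representation of $D\chi_E$; once that is done, both the continuity statement and the existence of $v_\star$ follow immediately.
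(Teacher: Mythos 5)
Your proof is correct, and for the existence of $v_\star$ it actually makes a sharper observation than the paper does. Both arguments start from the same identity~\eqref{BO:000-II}, and both continuity proofs are essentially equivalent: the paper bounds $|\Phi_\pm(v)-\Phi_\pm(w)|$ directly by $|v-w|\,{\rm Per}(E,B_1)$ using $|a_\pm - b_\pm|\le|a-b|$ on the distributional derivatives, while you pass through the reduced-boundary representation $D\chi_E=-\nu\,\mathcal{H}^{n-1}|_{\partial^*E}$ and use the $1$-Lipschitz dependence of $(v\cdot\nu)_\pm$ on $v$ plus dominated convergence — same Lipschitz constant, same conclusion. Where you genuinely diverge is the existence of $v_\star$: the paper only records that $\Psi(v)=\Phi_+(v)-\Phi_-(v)$ is odd (via $\Phi_\pm(-v)=\Phi_\mp(v)$) and then applies the intermediate value theorem along a path from $e_1$ to $-e_1$ on the connected sphere $S^{n-1}$, whereas you compute $\Psi(v)=-v\cdot W$ with $W=\int_{B_1\cap\partial^*E}\nu\,d\mathcal{H}^{n-1}$, so $\Psi$ is the restriction of a linear form, and $v_\star$ can be taken to be any unit vector in $W^\perp$ (nonempty because $n\ge2$). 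Your linearity observation is strictly stronger than odd symmetry and makes the use of $n\ge2$ more transparent; the paper's route is slightly more self-contained in that it avoids invoking the structure theorem for sets of finite perimeter. Your aside about sign conventions is resolved exactly as you expect: $\partial_v\chi_E=-(v\cdot\nu)\,\mathcal{H}^{n-1}|_{\partial^*E}$ gives $(\partial_v\chi_E)_+=(v\cdot\nu)_-\,\mathcal{H}^{n-1}|_{\partial^*E}$, matching the geometric description of $I_{v,+}$ in the paper (counting intersections where $v\cdot\nu<0$).
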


\begin{proof} Let $v$, $w\in S^{n-1}$.
By~\eqref{BO:000-II},
\begin{eqnarray*}
&& \big| \Phi_+ (v)-\Phi_+(w)\big| \le
\int_{B_1} \big| (\partial_v \chi_E)_+ (x)
-(\partial_w \chi_E)_+ (x)
\big|\,dx\\
&&\qquad \le 
\int_{B_1} \big| \partial_v \chi_E (x)
-\partial_w \chi_E (x)
\big|\,dx \le |v-w|\,\int_{B_1}|\nabla\chi_E(x)|\,dx
=|v-w|\,{\rm Per}\,(E,B_1).\end{eqnarray*}
This shows that~$\Phi_+$ is continuous. Similarly, one sees that~$\Phi_-$
is continuous.

Now we prove~\eqref{vstar}.
For this, let~$\Psi(v):=\Phi_+(v) -\Phi_-(v)$.
By~\eqref{BO:000-II},
\begin{equation*}
\Phi_\pm (-v)=\Phi_\mp(v). 
\end{equation*}
Therefore
\begin{equation}\label{iao}
\Psi(-v)= \Phi_+(-v) -\Phi_-(-v)
=\Phi_-(v) -\Phi_+(v) = -\Psi(v).\end{equation}
Now, if~$\Psi(e_1)=0$, we can take~$v_\star:=e_1$
and~\eqref{vstar} is proved. So we can assume that~$\Psi(e_1)>0$
(the case~$\Psi(e_1)<0$ is analogous). By~\eqref{iao}, we obtain that~$\Psi(-e_1)<0$.
Hence, since~$\Psi$ is continuous, it must have a zero on any path
joining~$e_1$ to~$-e_1$, and this proves~\eqref{vstar}.
\end{proof}

A control on the function~$\Phi_\pm$ implies a quantitative flatness
bound on the set~$E$, as stated here below:

\begin{lemma}\label{GHA98} Let~$n=2$.
There exists~$\mu_o>0$ such that for any~$\mu\in(0,\mu_o]$
the following statement holds.

Assume that
\begin{equation}\label{BO:001}
\Phi_-(e_2)\le \mu
\end{equation}
and that
\begin{equation}\label{BO:002}
\max\{ \Phi_+(e_1),\;\Phi_-(e_1)\}\le\mu 
.\end{equation}
Then, there exists a horizontal halfplane~$h\subset\R^2$ such that
\begin{equation}\label{JHAU}
\big| (E\setminus h)\cap B_1\big|+\big|(h\setminus E)\cap B_1\big|\le C\mu,\end{equation}
for some~$C>0$.
\end{lemma}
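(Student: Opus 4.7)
The plan is to use the integral-geometric representations \eqref{BO:000}--\eqref{BO:000-II} to transfer the hypotheses \eqref{BO:001}--\eqref{BO:002} into pointwise monotonicity on horizontal and vertical chords of $B_1$, and then combine the two slicings to locate the boundary height of the desired halfplane.

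First, I apply Markov's inequality. Since $I_{e_1,\pm}$ are nonnegative integer-valued by \eqref{JK:003}, the set
\[
A := \bigl\{\,y_2\in(-1,1):\; I_{e_1,+}\bigl((0,y_2)\bigr)+I_{e_1,-}\bigl((0,y_2)\bigr)\ge 1\,\bigr\}
\]
has $\mathcal{H}^1$-measure at most $2\mu$. For $y_2\in(-1,1)\setminus A$, Lemma~\ref{NONE} (applied with $v=e_1$, and also with $v=-e_1$ via the reversal identity $I_{-v,+}(y)=I_{v,-}(y)$) shows that $x_1\mapsto\chi_E(x_1,y_2)$ is both nonincreasing and nondecreasing along $B_1\cap\{x_2=y_2\}$, hence constant. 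Define
\[
B:=\{y_2\in(-1,1)\setminus A:\chi_E\equiv 1\text{ on the chord}\},\qquad C:=\{y_2\in(-1,1)\setminus A:\chi_E\equiv 0\text{ on the chord}\}.
\]
By the same reasoning applied to \eqref{BO:001}, via the identity $\Phi_-(e_2)=\Phi_+(-e_2)$ from the proof of Lemma~\ref{12345888LAJ}, the set
\[
A':=\bigl\{\,y_1\in(-1,1):\;I_{-e_2,+}\bigl((y_1,0)\bigr)\ge 1\,\bigr\}
\]
has measure at most $\mu$, and for $y_1\notin A'$ the map $x_2\mapsto\chi_E(y_1,x_2)$ is nondecreasing on $B_1\cap\{x_1=y_1\}$.

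I then combine the two facts. Suppose $y_2^B\in B$, $y_2^C\in C$ with $y_2^C>y_2^B$ and $M:=\max(|y_2^B|,|y_2^C|)\le\sqrt{1-\mu^2/4}$; for every $y_1$ in the open interval $J:=(-\sqrt{1-M^2},\sqrt{1-M^2})$ both $(y_1,y_2^B),(y_1,y_2^C)\in B_1$, with $\chi_E(y_1,y_2^B)=1$ and $\chi_E(y_1,y_2^C)=0$, so any $y_1\in J\setminus A'$ contradicts vertical nondecreasingness. Since $|J|\ge 2\sqrt{1-\mu^2/4}>\mu\ge|A'|$ for $\mu\le\mu_o$ small enough, no such pair exists. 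Hence, writing $I_\mu:=(-\sqrt{1-\mu^2/4},\sqrt{1-\mu^2/4})$, each element of $C\cap I_\mu$ lies below every element of $B\cap I_\mu$. Letting $c:=\inf(B\cap I_\mu)$ (with natural conventions when $B\cap I_\mu$ or $C\cap I_\mu$ is empty, giving $h=\emptyset$ or $h=\R^2$), set $h:=\R\times(c,+\infty)$, a horizontal halfplane.

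Finally I conclude \eqref{JHAU} by Fubini in $y_2$: chords with $y_2\in B\cap I_\mu$ lie in $h\cap E$, chords with $y_2\in C\cap I_\mu$ lie outside both, so neither contributes to $E\Delta h$; chords with $y_2\in A$ contribute at most $2|A|\le 4\mu$; and chords with $y_2\in(-1,1)\setminus I_\mu$ have length at most $\mu$, contributing at most $\mu\cdot|(-1,1)\setminus I_\mu|=O(\mu^3)$. Summing yields $|(E\Delta h)\cap B_1|\le C\mu$. The main obstacle is the combinatorial separation step: one must verify that vertical monotonicity really forces a clean ordering between $B$ and $C$ throughout the bulk $I_\mu$, and then absorb the boundary sliver $(-1,1)\setminus I_\mu$ -- where chords are too short to be guaranteed to meet a good vertical line -- into the final $O(\mu)$ error.
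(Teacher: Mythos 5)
Your proposal follows essentially the same route as the paper: use $\Phi_\pm(v)\ge\mathcal{H}^1(\mathcal{B}_\pm(v))$ to bound the sets where $I_{v,\pm}\ne 0$, invoke Lemma~\ref{NONE} (and its reflection via $I_{-v,+}=I_{v,-}$ and $\Phi_\pm(-v)=\Phi_\mp(v)$) to deduce that generic horizontal chords are constant and generic vertical chords are monotone, conclude that the ``all-in'' heights and ``all-out'' heights are separated, and take $h$ to be a separating horizontal halfplane. You also make rigorous two points the paper glosses over: (i) the combinatorial step that produces the strict ordering of $B$ and $C$ needs a common vertical line through both chords that avoids the bad set $A'$, and (ii) the boundary sliver where chords are too short to guarantee such a line must be absorbed into the $O(\mu)$ error, which you do via the short-chord estimate.

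However, there is an arithmetic slip in the combination step that leaves a (small, fixable) gap. You set $M:=\max(|y_2^B|,|y_2^C|)\le\sqrt{1-\mu^2/4}$ and $J:=(-\sqrt{1-M^2},\sqrt{1-M^2})$, so in the worst case $1-M^2\ge\mu^2/4$ gives only $|J|=2\sqrt{1-M^2}\ge\mu$, not the $|J|\ge 2\sqrt{1-\mu^2/4}$ you wrote. Since also $|A'|\le\mu$, you cannot conclude $J\setminus A'\ne\emptyset$: the inequality $|J|\ge\mu\ge|A'|$ is not strict. The fix is simply to shrink the bulk interval a bit more, e.g.\ take $I_\mu:=(-\sqrt{1-4\mu^2},\sqrt{1-4\mu^2})$, giving $|J|\ge 4\mu>\mu\ge|A'|$; the boundary-sliver estimate still yields a contribution of order $\mu\cdot O(\mu^2)=O(\mu^3)$ and the overall bound $|(E\Delta h)\cap B_1|\le C\mu$ goes through unchanged.
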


\begin{proof} 
Given~$v\in\partial B_1$,
we take into account the sets of $y\in v^\perp$
which give a positive contribution to~$I_{v,\pm}(y)$. For this,
we define
$$ {\mathcal{B}}_\pm(v):=
\{ y\in v^\perp {\mbox{ s.t. }} I_{v,\pm}(y)\ne0\}.$$
{F}rom~\eqref{JK:003}, we know that if~$y\in {\mathcal{B}}_\pm(v)$, then
$I_{v,\pm}(y)\ge1$. As a consequence
of this and of~\eqref{BO:000}, we have that
$$ \Phi_\pm (v)\ge \int_{ {\mathcal{B}}_\pm (v) } I_{v,\pm}(y)
\,d{\mathcal{H}}^{1}(y)
\ge {\mathcal{H}}^{1} ( {\mathcal{B}}_\pm(v) ).$$
Accordingly, by~\eqref{BO:001}
and~\eqref{BO:002}, we see that
\begin{equation}\label{BO:003A}
{\mathcal{H}}^{1} ( {\mathcal{B}}_-(e_2) )\le
\mu 
\end{equation}
and
\begin{equation}\label{BO:003}
{\mathcal{H}}^{1} ( {\mathcal{B}}_\pm(e_1) )\le
\mu 
.\end{equation}
Furthermore, for any~$y\in v^\perp\setminus{\mathcal{B}}_+(v)$
(resp.~$y\in v^\perp\setminus{\mathcal{B}}_-(v)$),
we have that~$I_{v,+}(y)=0$ (resp.,~$I_{v,-}(y)=0)$ and thus, by
Lemma~\ref{NONE},
the map~$B_1\cap( {y+\R v} )\ni x\mapsto
\chi_E(x)$ is nonincreasing (resp., nondecreasing).

Therefore,
by~\eqref{BO:003},
we have that for any vertical coordinate~$y\in e_1^\perp$
outside the small set~${\mathcal{B}}_-(e_1)\cup {\mathcal{B}}_+(e_1)$
(which has total length of size~$2\mu$),
the vertical line~$y+\R e_1$ is either all contained in~$E$
or in its complement (see Figure~\ref{INI}).

\begin{figure}[ht]
\includegraphics[width=10.4cm]{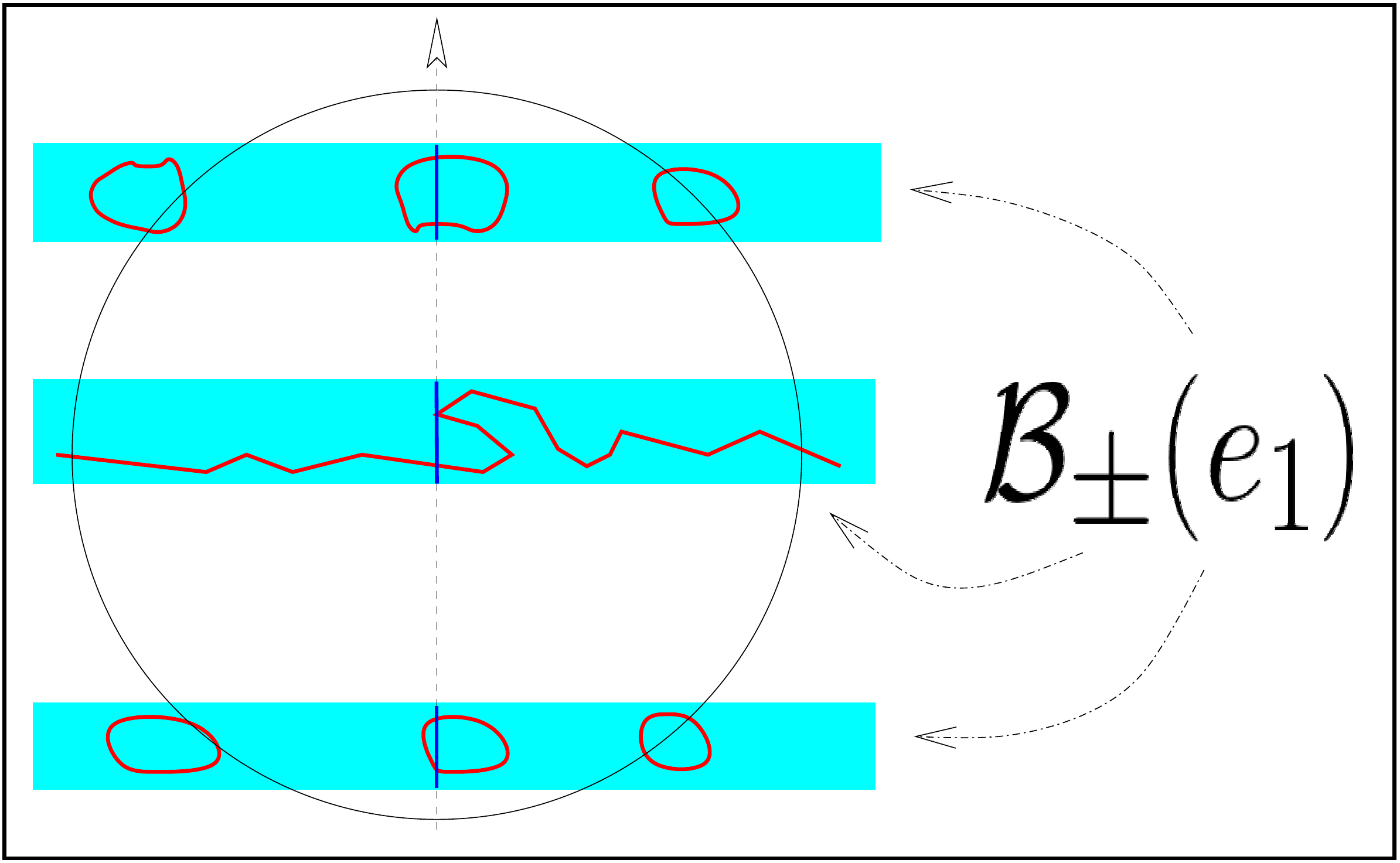}
\caption{\sl Horizontal lines do not meet the boundary of~$E$,
with the exception of a small set~${\mathcal{B}}_\pm(e_1)$.}
\label{INI}
\end{figure}

That is, we can denote by~${\mathcal{G}}_E$
the set of vertical coordinates~$y$
for which the portion in~$B_1$
of the horizontal line passing through~$y$ lies in~$E$
and, similarly, by~${\mathcal{G}}_{E^c}$
the set of vertical coordinates~$y$
for which the portion in~$B_1$
of the horizontal line passing through~$y$ lies in~$E^c$
and we obtain that~${\mathcal{G}}_E\cup {\mathcal{G}}_{E^c}$
exhaust the whole of~$(-1,1)$, up to a set of size at most~$2\mu$.

We also remark that~${\mathcal{G}}_{E}$ lies below~${\mathcal{G}}_{E^c}$:
indeed, by~\eqref{BO:003A},
we have that vertical lines can only exit the set~$E$
(possibly with the exception of a small set of size~$\mu$).
The situation is depicted in Figure~\ref{INI2}.

\begin{figure}[ht]
\includegraphics[width=10.4cm]{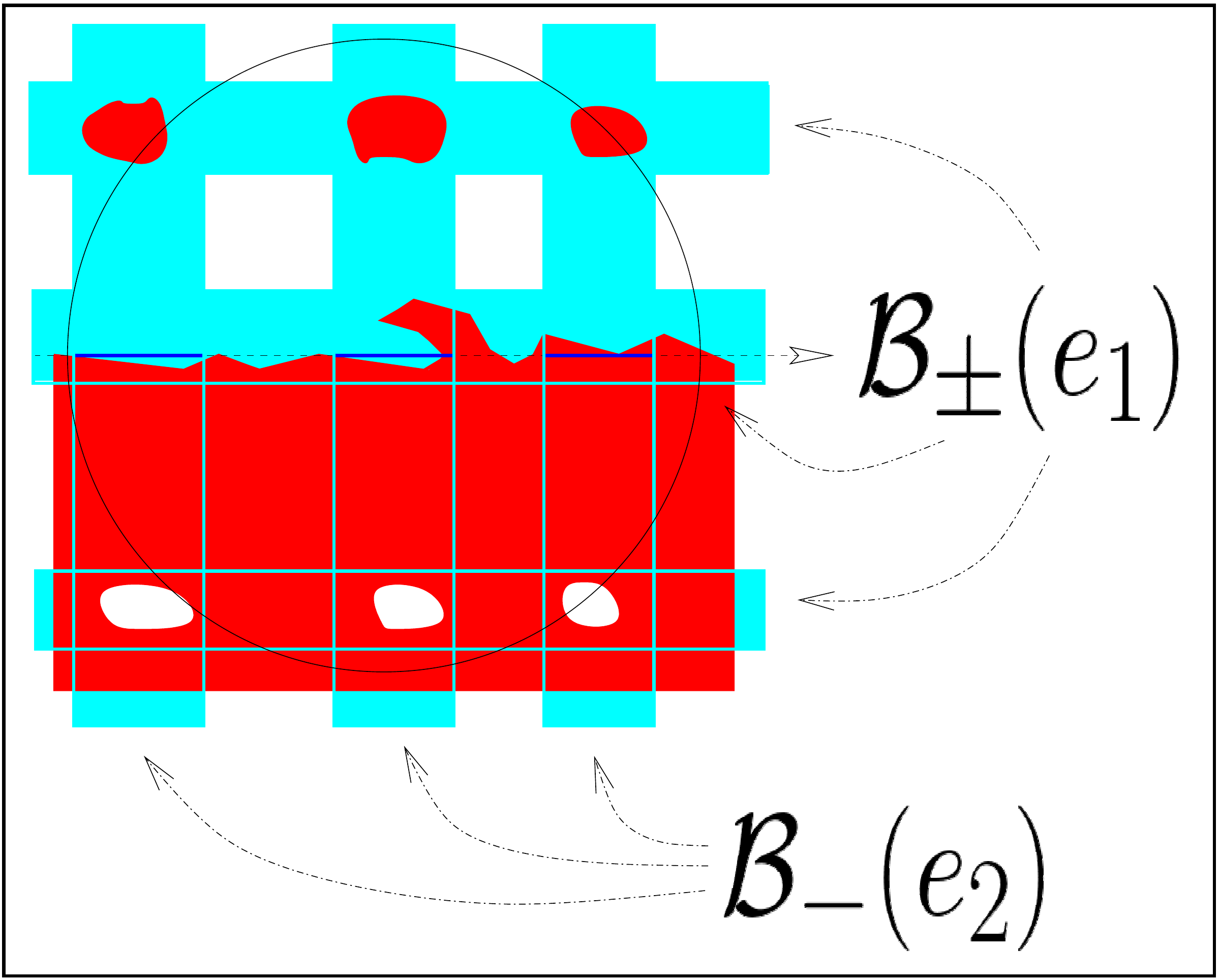}
\caption{\sl Vertical lines do not meet the boundary of~$E$,
with the exception of a small set~${\mathcal{B}}_-(e_2)$.}
\label{INI2}
\end{figure}

Hence, if we take~$h$ to be a horizontal 
halfplane which separates~${\mathcal{G}}_{E}$
and~${\mathcal{G}}_{E^c}$,
we obtain~\eqref{JHAU}.
\end{proof}

With this, we can now complete the proof of Theorem \ref{1123THJ}.
The main tool for this goal is Lemma~\ref{GHA98}.
In order to apply it, we need to check that~\eqref{BO:001}
and~\eqref{BO:002} are satisfied.
To this end,
we argue as follows. First of all, fixed a large~$R>2$,
we consider, as in Section~\ref{sec:primo},
a diffeomorphism~$\Phi^t$
such that~$\Phi^t(x)=x$
for any~$x\in\R^n\setminus B_{9R/10}$,
and
$\Phi^t(x)=x+tv$ for any~$x\in B_{3R/4}$, and we set~$E_t:=
\Phi^t(E)$. {F}rom~\eqref{O9056} (recall that here~$n=2$), we have that
\begin{equation*}
\min\left\{
\int_{B_{R/2}}
\big( \chi_{E}(x+tv)-\chi_E(x)\big)_+\,dx, \;
\int_{B_{R/2}}
\big( \chi_{E}(x+tv)-\chi_E(x)\big)_-\,dx
\right\}\le \frac{Ct}{R^s},
\end{equation*}
for some~$C>0$. Thus, dividing by~$t$ and sending~$t\searrow0$,
$$ \min\left\{
\int_{B_{R/2}}
\big(\partial_v \chi_{E}(x)\big)_+\,dx, \;
\int_{B_{R/2}}
\big( \partial_v\chi_{E}(x)\big)_-\,dx
\right\}\le \frac{C}{R^s}.$$
That is, recalling~\eqref{BO:000-II},
\begin{equation} \label{IO:co1}
\min\left\{ \Phi_+(v),\;\Phi_-(v)
\right\}\le \frac{C}{R^s}.\end{equation}
We also observe that~$E$ has finite perimeter in~$B_1$,
thanks to Theorem~\ref{B V T}, and so we can make use of
Lemma~\ref{12345888LAJ}. In particular, by~\eqref{vstar},
after a rotation of coordinates, we may assume that~$
\Phi_+(e_1)=\Phi_-(e_1)$. Hence~\eqref{IO:co1} says that
\begin{equation} \label{IO:co1:X67X}
\max\left\{ \Phi_+(e_1),\;\Phi_-(e_1)
\right\}
=\min\left\{ \Phi_+(e_1),\;\Phi_-(e_1)
\right\}\le \frac{C}{R^s}.\end{equation}
Also,
up to a change of orientation, we may suppose that~$\Phi_-(e_2)\le
\Phi_+(e_2)$, hence in this case~\eqref{IO:co1} says that
\begin{equation*} 
\Phi_-(e_2)\le \frac{C}{R^s}.\end{equation*}
{F}rom this and~\eqref{IO:co1:X67X},
we see that~\eqref{BO:001}
and~\eqref{BO:002} are satisfied (with~$\mu=C/R^s$)
and so by Lemma~\ref{GHA98} we conclude that
$$ \big| (E\setminus h)\cap B_1\big|+\big|(h\setminus E)\cap B_1\big|\le \frac{C}{R^s},$$
for some halfplane~$h$.
This completes the proof of Theorem~\ref{1123THJ}:
as a matter of fact, the result proven 
is even stronger, since it says that, after removing
horizontal and vertical slabs of size~$C/R^s$,
we have that~$\partial E$ in~$B_1$
is a graph of oscillation bounded by~$C/R^s$,
see Figure~\ref{INI2}
(in fact, more general statements and proofs can be find in~\cite{joaq}).

\section{Sketch of the proof of Theorem~\ref{PER}}\label{PER:S}

The core of the proof of Theorem~\ref{PER}
consists in constructing a suitable barrier that can be slided
``from below'' and which exhibits the desired stickiness
phenomenon: if this is possible, since the $s$-minimal surface
cannot touch the barrier, it has to stay above the barrier
and stick at the boundary as well.

So, the barrier we are looking for should have
negative fractional mean curvature,
coincide with~$F$ outside~$(-1,1)\times\R$
and contain~$(-1,1)\times(-\infty,\delta^\gamma)$.

Such barrier is constructed in~\cite{stick}
in an iterative way, that we now try to describe.\medskip

\noindent{\it Step 1.} Let us start by looking at
the subgraph of the function~$y=\frac{x_+}{\ell}$, given~$\ell\ge0$.
Then,
at all the boundary points~$X=(x,y)$ with
positive abscissa~$x>0$, the fractional mean curvature is
at most
\begin{equation}\label{OA:PAI9}
-\frac{c}{\max\{1,\ell\}\,|X|^{2s}},\end{equation}
for some~$c>0$.
The full computation is given in Lemma~5.1
of~\cite{stick}, but we can give a heuristic justification of
it, by saying that for small~$X$ the boundary point gets close
to the origin, where there is a corner and the curvature blows up
(with a negative sign, since there is ``more than a hyperplane''
contained in the set), see Figure~\ref{BARR1}. Also, the power~$2s$
in~\eqref{OA:PAI9} follows by scaling.

\begin{figure}[ht]
\includegraphics[width=9.4cm]{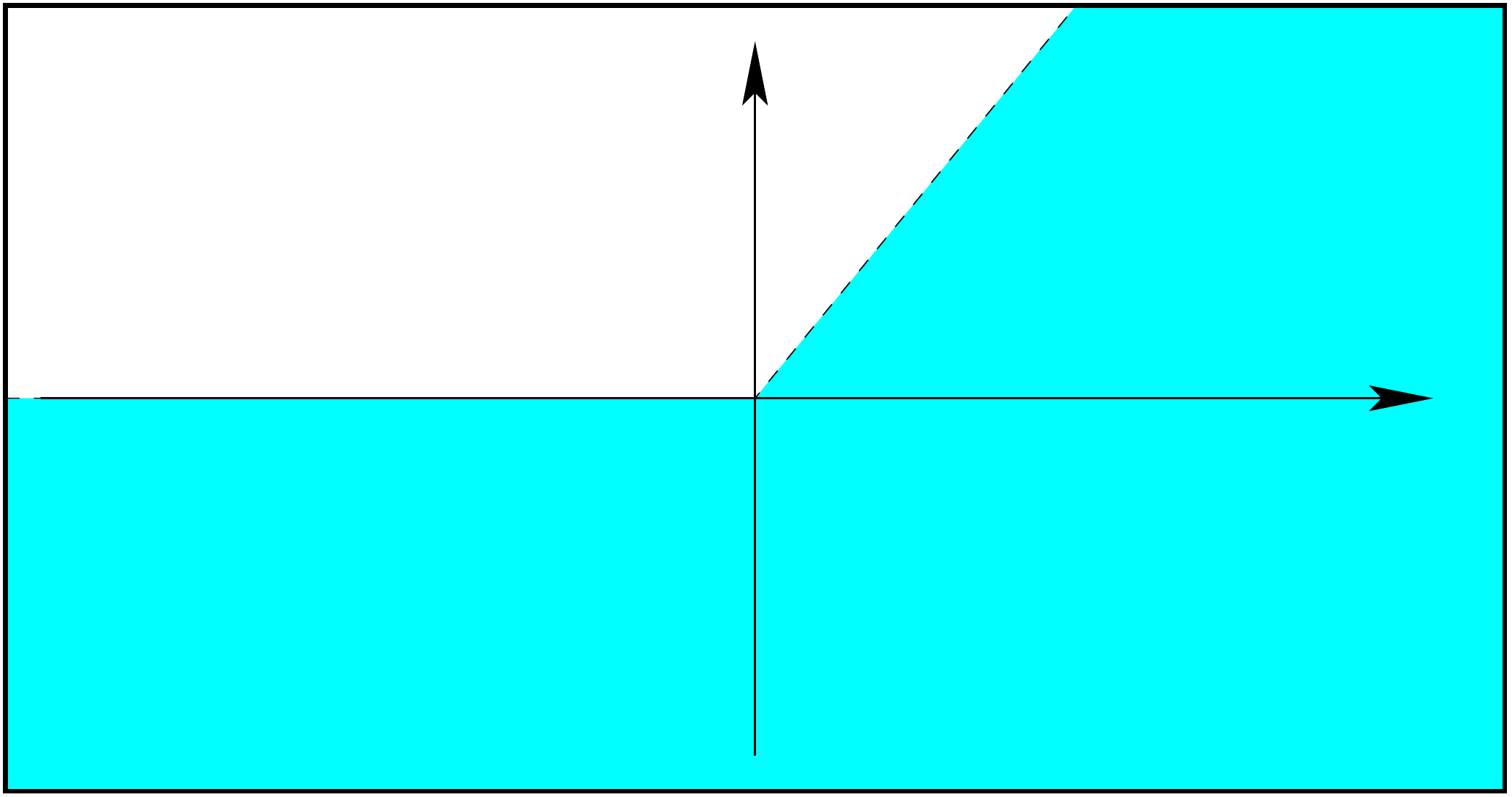}
\caption{\sl Description of Step 1.}
\label{BARR1}
\end{figure}

In addition, if~$\ell$ is close to~$0$, this first barrier
is close to a ninety degree angle, 
while if~$\ell$ is large it is close to a flat line, and these considerations
are also in agreement with~\eqref{OA:PAI9}.
\medskip

\noindent{\it Step 2.} Having understood in Step~1 what happens
for the ``angles'', now we would like to ``shift iteratively 
in a smooth way from one slope
to another'', see Figure~\ref{BARR2}.

\begin{figure}[ht]
\includegraphics[width=9.4cm]{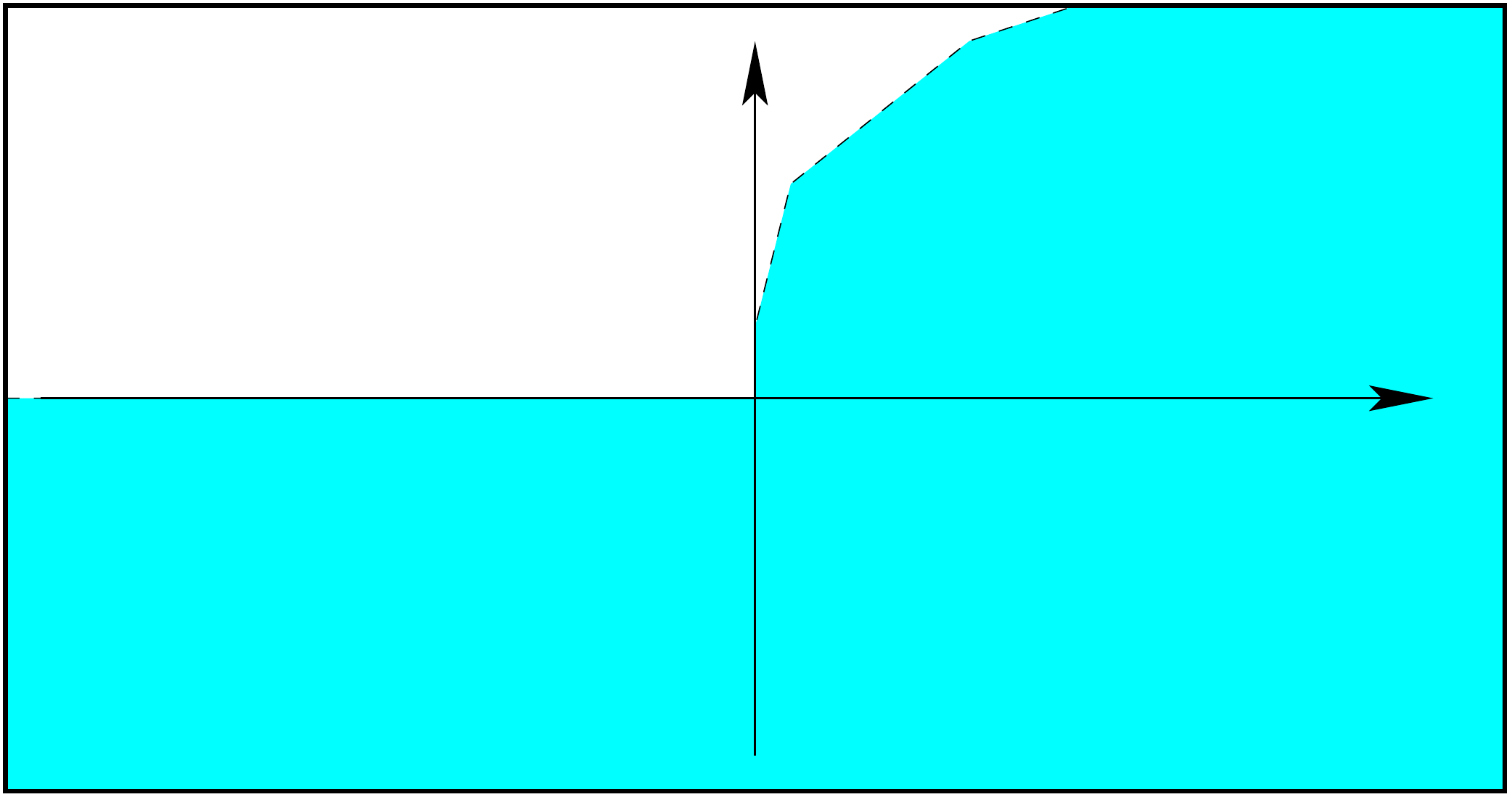}
\caption{\sl Description of Step 2.}
\label{BARR2}
\end{figure}

The detailed statement is given in Proposition~5.3
in~\cite{stick}, but the idea is as follows.
For any~$K\in\N$, $K\ge1$, one looks at the subgraph of
a nonnegative function~$v_K$ such that
\begin{itemize}
\item $v_K(x)=0$ if~$x<0$,
\item $v_K(x)\ge a_K$ if~$x>0$, for some~$a_K>0$,
\item $v_K(x)=\frac{x+q_K}{\ell_K}$ for any~$x\ge\ell_K-q_K$, for some~$\ell_K\ge K$ and~$q_K\in
\left[0,\frac1K\right]$,
\item at all the boundary points~$X=(x,y)$ with
positive abscissa~$x>0$, the fractional mean curvature is
at most~$-\frac{c}{\ell_K\,|X|^{2s}}$, for some~$c>0$.
\end{itemize}
\medskip

\noindent{\it Step 3.} If~$K$ is sufficiently large in Step~2,
the final slope is almost horizontal. In this case, one
can smoothly glue such barrier with a power like function like~$x^{
\frac12+s+\epsilon_0}$. Here, $\epsilon_0$ is any fixed
positive exponent (the power~$\gamma$ in the statement of
Theorem~\ref{PER} is related to~$\epsilon_0$, since~$\gamma:=\frac{
2+\epsilon_0}{1-2s}$). The details of the barrier constructed
in this way are given in Proposition~6.3
of~\cite{stick}.
In this case, one can still control the fractional
mean curvature
at all the boundary points~$X=(x,y)$ with
positive abscissa~$x>0$, but the estimate is of the type either~$|X|^{-2s}$,
for small~$|X|$, or~$|X|^{-\frac12-s+\epsilon_0}$, for large~$|X|$.
A sketch of such barrier is given in
Figure~\ref{BARR3}.

\begin{figure}[ht]
\includegraphics[width=9.4cm]{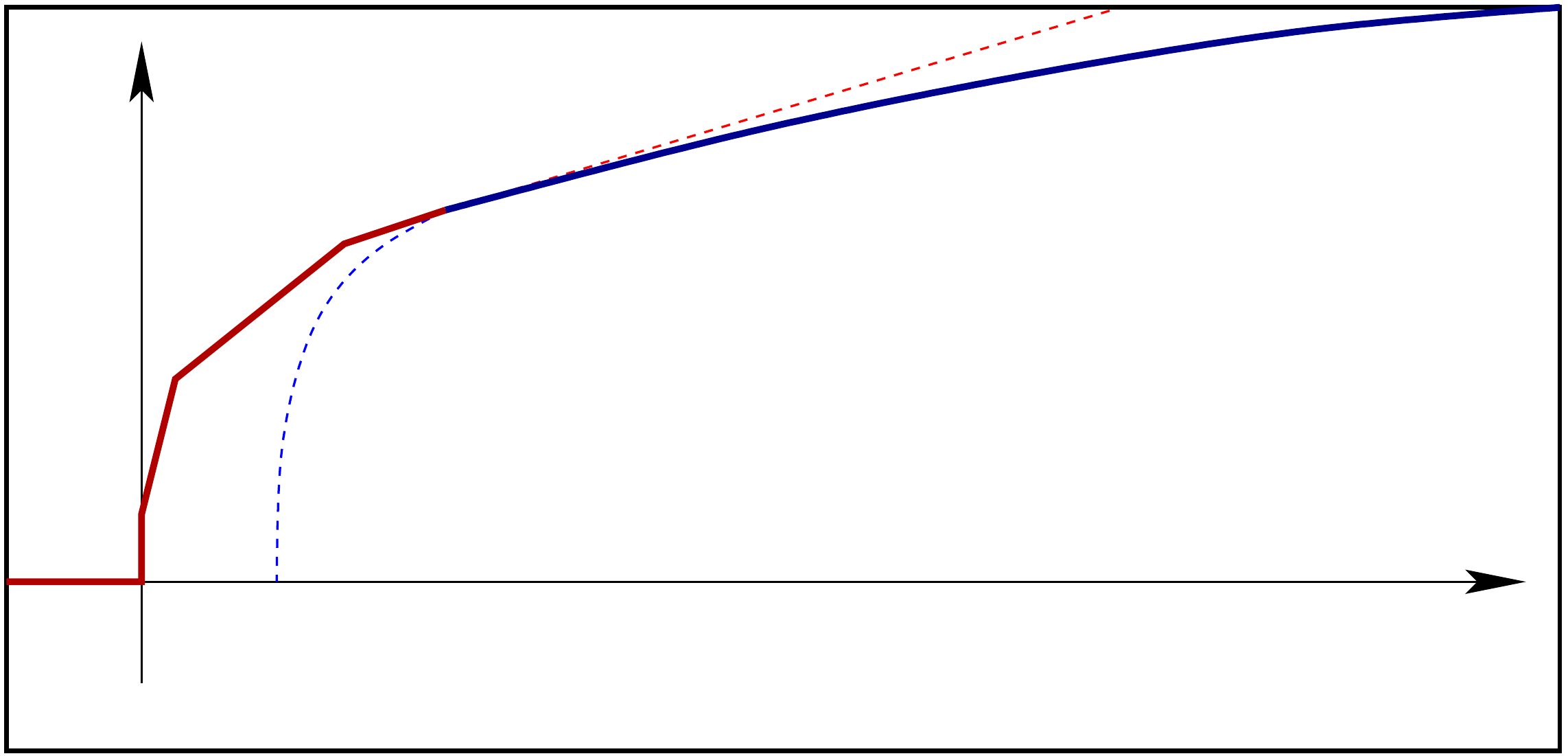}
\caption{\sl Description of Step 3.}
\label{BARR3}
\end{figure}

\medskip

\noindent{\it Step 4.} Now we use the barrier of Step~3
to construct a compactly supported object. The idea is to take such
barrier, to reflect it and to glue it
at a ``horizontal level'', see
Figure~\ref{BARR4}.

\begin{figure}[ht]
\includegraphics[height=6.8cm]{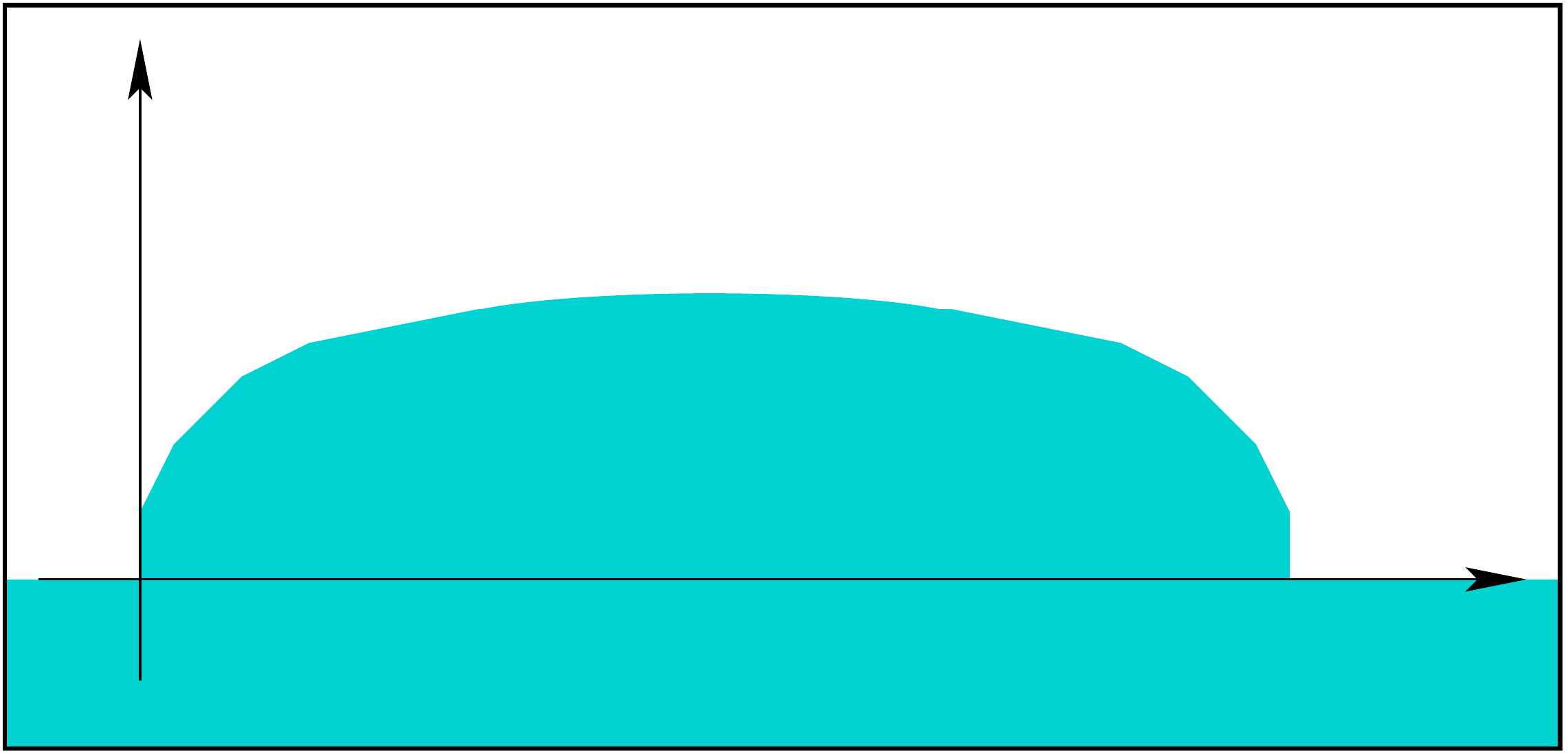}
\caption{\sl Description of Step 4.}
\label{BARR4}
\end{figure}

We remark that such barrier 
has a vertical portion at the origin and one can control its fractional
mean curvature from above with a negative quantity for
the boundary points~$X=(x,y)$ with
positive, but not too large, abscissa.

Of course, this type of estimate cannot hold at the maximal point
of the barrier, where ``more than a hyperplane'' is contained
in the complement of the set, and therefore the fractional mean curvature
is positive (the precise quantitative estimate
is given in
Proposition~7.1. of~\cite{stick}).\medskip

\noindent{\it Step 5.}
Nevertheless, we can now compensate this error in the fractional
mean curvature near the maximal point
of the barrier by adding two suitably large domains on the sides
of the barriers, see Figure~\ref{BARR5}.

\begin{figure}[ht]
\includegraphics[height=6.8cm]{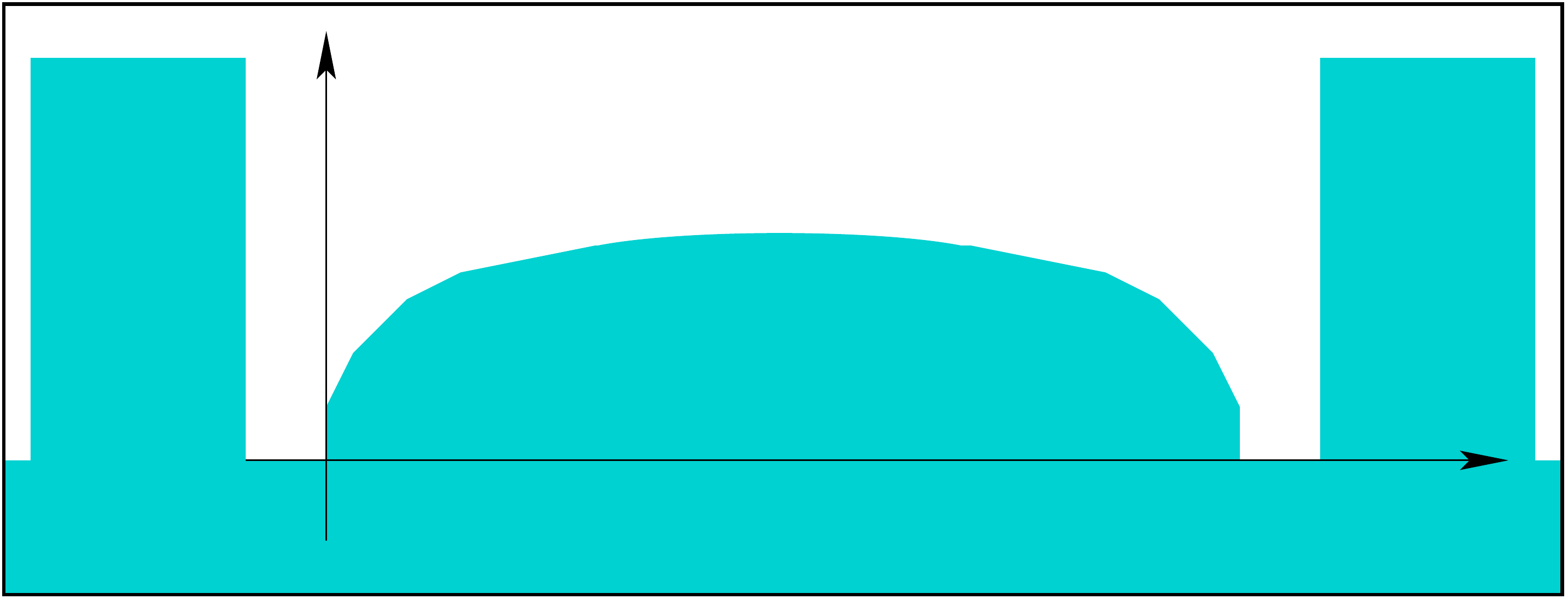}
\caption{\sl Description of Step 5.}
\label{BARR5}
\end{figure}

The barrier constructed in this way is described in details
in Proposition~7.3 of~\cite{stick} and its basic feature is to
possess a vertical portion near the origin and to possess negative
fractional mean curvature. 

By keeping good track of the quantitative estimates on the bumps
of the barriers and on their fractional mean curvatures,
one can now scale the latter barrier and slide it from below,
in order to prove
Theorem~\ref{PER}. The full details are given in Section~8
of~\cite{stick}.

\begin{appendix}

\section{A sketchy discussion on the asymptotics of the $s$-perimeter}\label{APP12}

In this appendix, we would like to emphasize the fact that,
as $s\nearrow1/2$, the $s$-perimeter recovers
(under different perspectives) the classical perimeter, while,
as $s\searrow0$, the nonlocal features become predominant and
the problem produces
the Lebesgue measure ???- or, better to say, convex combinations of Lebesgue
measures by interpolation parameters of nonlocal type.

First of all,
we show that if $E$ is a bounded set with smooth boundary, then
\begin{equation}\label{TccP01}
\lim_{s\nearrow1/2} (1-2s)\,{\rm Per}_s\,(E,\R^n)=
\kappa_{n-1}\,{\rm Per}\,(E,\R^n)
,\end{equation}
where we denoted by $\kappa_n$ 
the $n$-dimensional volume of the $n$-dimensional
unit ball.

For further convenience, we also use the notation $$ \varpi_n := 
{\mathcal{H}}^{n-1}(S^{n-1}).$$ 
Notice that, by polar coordinates, 
\begin{equation}\label{GJB1OUJLsssaAA}
\kappa_n = \int_{S^{n-1}}\left[\int_0^1 \rho^{n-1}\,d\rho\right]\,d{\mathcal{H}}^{n-1}(x)=\frac{\varpi_n}{n}.
\end{equation}
We point out that formula \eqref{TccP01}
is indeed a simple version of more general
approximation
results, for which we refer to \cite{MR1945278, MR1942130,
MR2765717, MR2033060, MR2782803} and to \cite{CV} for the regularity
results that can be achieved by approximation methods.
See also~\cite{MR3506705} for further comments and
examples.
\medskip

The proof of \eqref{TccP01} can be performed by different methods;
here we give a simple argument which uses formula \eqref{8uj56789:A}.
To this aim, we fix $x\in\partial E$ and $\delta>0$.
If $y\in (\partial E)\cap B_\delta(x)$
and $\delta$ is sufficiently small, then $\nu(y)=\nu(x)+O(\delta)$.
Moreover, for any $\varrho\in(0,\delta]$,
the $(n-2)$-dimensional contribution of $\partial E$
in $\partial B_\varrho(x)$ coincides,
up to higher orders in $\delta$, with the one
of the $(n-2)$-dimensional sphere, that is
$\varpi_{n-1}\,\varrho^{n-2}$,
see Figure \ref{de45678gsagdu}.

\begin{figure}[ht]
\includegraphics[width=9.4cm]{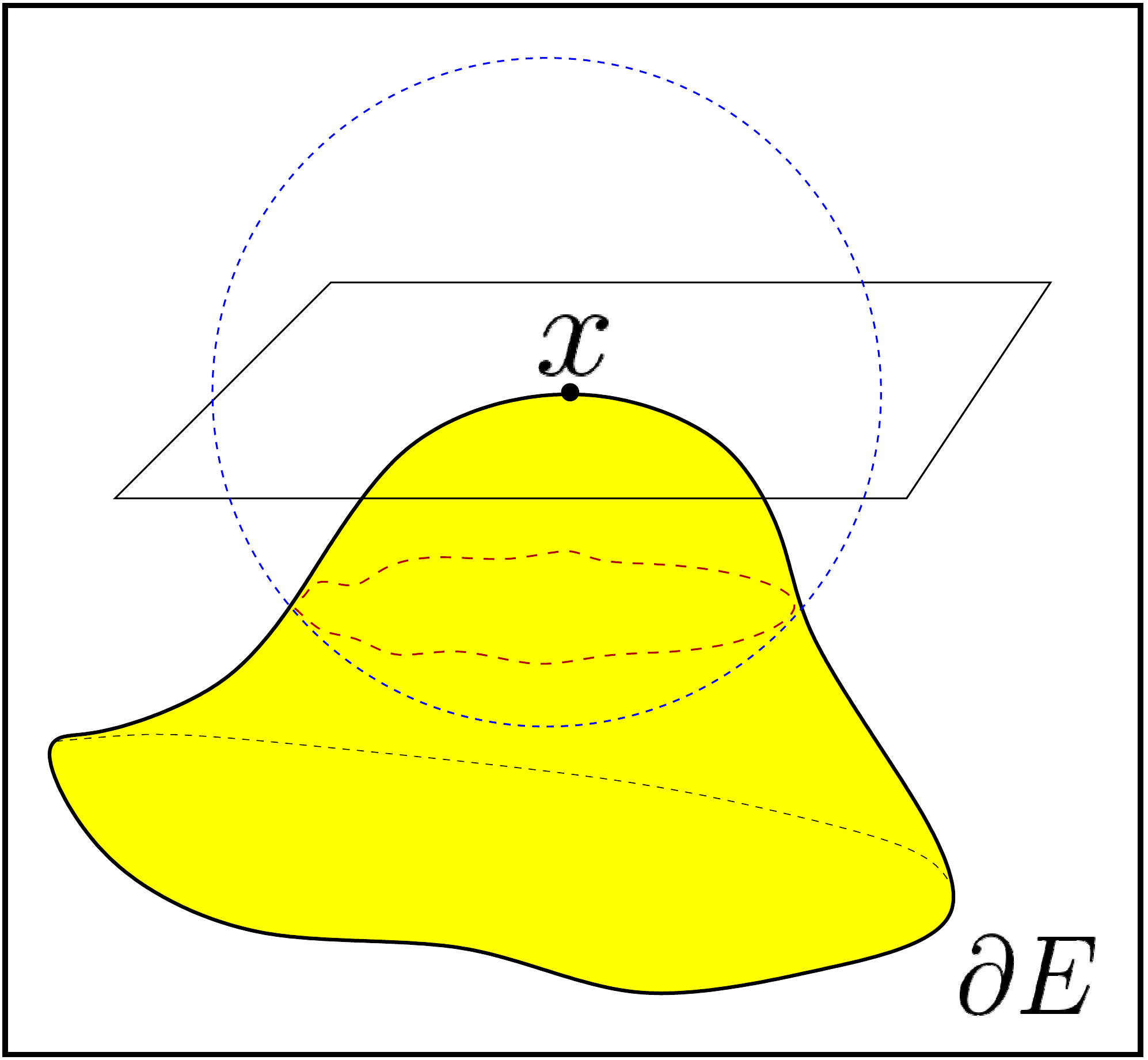}
\caption{\sl ${\mathcal{H}}^{n-2}\big( (\partial E)\cap \partial B_\varrho(x) \big)$
(in the picture, $n=3$).}
\label{de45678gsagdu}
\end{figure}

As a consequence of these observations, we have that
\begin{eqnarray*} 
&&\int_{(\partial E)\cap B_\delta(x)} \frac{ \nu(x)\cdot\nu(y) }{
|x-y|^{n+2s-2}}\,d{\mathcal{H}}^{n-1}(y)
=
\int_{(\partial E)\cap B_\delta(x)} \frac{ 1+O(\delta) }{
|x-y|^{n+2s-2}}\,d{\mathcal{H}}^{n-1}(y) \\
&&\qquad= \big( 1+O(\delta) \big)\,
\int_0^\delta \frac{
{\mathcal{H}}^{n-2} \big( (\partial E)\cap (\partial B_\rho)\big) 
}{\varrho^{n+2s-2}}
\,d\varrho
\\&&\qquad= 
\big( 1+O(\delta) \big)\,\varpi_{n-1}\,
\int_0^\delta \frac{ \varrho^{n-2}
}{\varrho^{n+2s-2}}
\,d\varrho\\
&&\qquad= \frac{ \big( 1+O(\delta) \big)\,\varpi_{n-1}\, \delta^{1-2s} }{
1-2s}.\end{eqnarray*}
On the other hand,
\begin{eqnarray*}
\int_{(\partial E)\setminus B_\delta(x)} \frac{ \nu(x)\cdot\nu(y) }{
|x-y|^{n+2s-2}}\,d{\mathcal{H}}^{n-1}(y)\le
\frac{ {\mathcal{H}}^{n-1} (\partial E)}{\delta^{n+2s-2}}.
\end{eqnarray*}
Therefore
$$ \int_{\partial E} \frac{ \nu(x)\cdot\nu(y) }{
|x-y|^{n+2s-2}}\,d{\mathcal{H}}^{n-1}(y) =
\frac{ \big( 1+O(\delta) \big)\,\varpi_{n-1}\, \delta^{1-2s} }{
1-2s}+O(\delta^{-n-2s+2}).$$
Accordingly, recalling \eqref{8uj56789:A},
\begin{eqnarray*}
&&\lim_{s\nearrow1/2} (1-2s)\,{\rm Per}_s\,(E,\R^n)\\
&&\qquad=
\lim_{s\nearrow1/2} \frac{1-2s}{2s\,(n+2s-2)}\,
\int_{\partial E}\,\left[
\int_{\partial E} \frac{ \nu(x)\cdot\nu(y) }{
|x-y|^{n+2s-2}}\,d{\mathcal{H}}^{n-1}(y)\,\right]
\,d{\mathcal{H}}^{n-1}(x)\\
&&\qquad=
\lim_{s\nearrow1/2} \frac{1-2s}{n-1}\,
\int_{\partial E}\,\left[
\frac{ \big( 1+O(\delta) \big)\,\varpi_{n-1}\, \delta^{1-2s} }{
1-2s}+O(\delta^{-n-2s+2})
\right]\,d{\mathcal{H}}^{n-1}(x) \\
&&\qquad
= \lim_{s\nearrow1/2} \frac{\big( 1+O(\delta) \big)\,\varpi_{n-1}\, \delta^{1-2s}
+(1-2s)\,O(\delta^{-n-2s+2})}{n-1}\,{\mathcal{H}}^{n-1}({\partial E})\\
&&\qquad=
\frac{\big( 1+O(\delta) \big)\,\varpi_{n-1}}{n-1}
\,{\mathcal{H}}^{n-1}({\partial E}).
\end{eqnarray*}
Hence, by taking $\delta$ arbitrarily small,
$$ \lim_{s\nearrow1/2} (1-2s)\,{\rm Per}_s\,(E,\R^n)=
\frac{\varpi_{n-1}}{n-1}
\,{\mathcal{H}}^{n-1}({\partial E}),$$
which gives \eqref{TccP01}, in view of \eqref{GJB1OUJLsssaAA}.\medskip

Now we show that, if
$n\ge3$ and $E$ is a bounded set with smooth boundary,
\begin{equation}\label{TccP02}
\lim_{s\searrow0} s\,{\rm Per}_s\,(E,\R^n)=\frac{\varpi_n}{2}
\,|E|.\end{equation}
Once again, more general (and subtle) statements hold true, see
\cite{MR1940355, MR3007726} for details.
\medskip

To prove \eqref{TccP02}, we denote by
$$ \Gamma(x):= \frac1{(n-2)\,\varpi_n\,|x|^{n-2}}$$
the fundamental solution\footnote{It is interesting to understand how the fundamental solution of the Laplacian also 
occurs when $n=2$. In this case, we observe that if $c_E:=\int_{\partial E}\nu(y)\,
d{\mathcal{H}}^{n-1}(y)$, then of course 
$$ \iint_{(\partial E)\times (\partial E)} \nu(x)\cdot\nu(y)\,d{\mathcal{H}}^{n-1}(x)
\,d{\mathcal{H}}^{n-1}(y) =\int_{\partial E} \nu(x)\cdot c_E \,d{\mathcal{H}}^{n-1}(x)
=\int_E {\rm div}_x c_E\,dx =\int_E 0\,dx=0.$$
Hence, we write
$$ \frac{1}{|x-y|^{2s}}= \exp\left( -2s\log |x-y|\right)=
1-2s\log |x-y|+O(s^2),$$
thus
$$ \frac{1}{2s}\iint_{(\partial E)\times (\partial E)}
\frac{\nu(x)\cdot\nu(y)}{|x-y|^{2s}}\,d{\mathcal{H}}^{n-1}(x)
\,d{\mathcal{H}}^{n-1}(y)
= - \iint_{(\partial E)\times (\partial E)}
\nu(x)\cdot\nu(y)\,\log |x-y|\,d{\mathcal{H}}^{n-1}(x)\,d{\mathcal{H}}^{n-1}(y)+O(s)$$
and one can use the same fundamental solution trick
as in the case $n\ge3$. }
of the Laplace operator when $n\ge3$, that is
$$ -\Delta \Gamma (x)=\delta_0(x),$$
where $\delta_0$ is the Dirac's Delta centered at
the origin. Then, from \eqref{8uj56789:A},
\begin{eqnarray*}
&& \lim_{s\searrow0} s\,{\rm Per}_s\,(E,\R^n)=
\frac{1}{2\,(n-2)}\,
\int_{\partial E}\,\left[
\int_{\partial E} \frac{ \nu(x)\cdot\nu(y) }{
|x-y|^{n-2}}\,d{\mathcal{H}}^{n-1}(y)
\right]\,d{\mathcal{H}}^{n-1}(x) \\
&&\qquad =\frac{ \varpi_n }{2}\,
\int_{\partial E}\,\left[
\int_{\partial E} 
\nu(y)\cdot \big( 
\nu(x)
\Gamma(x-y)\big)\,d{\mathcal{H}}^{n-1}(y)
\right]\,d{\mathcal{H}}^{n-1}(x)
\\&&\qquad=\frac{\varpi_n}{2}\,
\int_{\partial E}\,\left[
\int_{E} \,{\rm div}_y \big( 
\nu(x) \Gamma(x-y) \big)\,dy
\right]\,d{\mathcal{H}}^{n-1}(x)\\
&&\qquad=\frac{\varpi_n}{2}\,
\int_{\partial E}\,\left[
\int_{E}
\nu(x) \cdot\nabla_y\Gamma(x-y) \,dy
\right]\,d{\mathcal{H}}^{n-1}(x)\\
&&\qquad=\frac{\varpi_n}{2}\,
\int_{E}\,\left[
\int_{\partial E}                    
\nu(x) \cdot\nabla_y\Gamma(x-y) \,d{\mathcal{H}}^{n-1}(x)
\right]\,dy
\\&&\qquad=\frac{\varpi_n}{2}\,
\int_{E}\,\left[
\int_{E}
{\rm div}_x\,\big(\nabla_y\Gamma(x-y)\big) \,dx
\right]\,dy
\\&&\qquad= 
-\frac{\varpi_n}{2}\,
\iint_{E\times E}
\Delta \Gamma(x-y)\,dx\,dy
\\&&\qquad= 
\frac{\varpi_n}{2}\,
\iint_{E\times E}
\delta_0(x-y)\,dx\,dy \\&&\qquad=\frac{\varpi_n}{2}\,\int_{E}
1\,dy\\&&\qquad
=\frac{\varpi_n}{2}\,|E|,\end{eqnarray*}
that is \eqref{TccP02}.

We remark that formula \eqref{TccP02}
is actually a particular case of a more general phenomenon,
described in \cite{MR3007726}. For instance, if
the following limit exists
$$ a(E) :=\lim_{s\searrow0} \frac{2s}{\varpi_n} \,\int_{E\setminus B_1}
\frac{dx}{|x|^{n+2s}},$$
then
\begin{equation}
\label{TccP03}
\lim_{s\searrow0} \frac{2s}{\varpi_n}\,{\rm Per}_s\,(E,\Omega)=
(1-a(E))\,|E\cap\Omega|+a(E)\,|\Omega\setminus E|.\end{equation}
Notice indeed that \eqref{TccP02} is a particular case of
\eqref{TccP03}, since when $E$ is bounded, then $a(E)=0$.
Equation \eqref{TccP03} has also a suggestive interpretation,
since it says that, in a sense, as $s\searrow0$, the fractional
perimeter is a convex interpolation of measure contributions
inside the reference set $\Omega$: namely it weights the measures of
two contributions of $E$ and the complement of $E$ inside $\Omega$
by a convex parameter $a(E)\in[0,1]$ which in turn takes into account
the behavior of $E$ at infinity.

\section{A sketchy discussion on the asymptotics of the $s$-mean curvature}\label{APP1W}

As $s\nearrow1/2$, the $s$-mean curvature
recovers the classical mean curvature (see \cite{abatangelo} for details).

A very natural question raised to us by Jun-Cheng Wei dealt with the
asymptotics as $s\searrow0$ of the $s$-mean curvature.
Notice that, by \eqref{TccP02}, we know that $2s$ times the $s$-perimeter
approaches $\varpi_n$ times the volume. Since the variation of the volume along
normal deformations is $1$, if one is allowed to ``exchange the limits''
(i.e. to identify the limit of the variation with the variation of the limit),
then she or he may guess that $2s$ times the $s$-mean curvature should approach $\varpi_n$.

This is indeed the case, and higher orders can be computed as well,
according to the following observation: if $E$ has smooth boundary,
$p\in\partial E$ and $E\subseteq B_R(p)$ for some $R>0$, then
\begin{equation}\label{JCW}
2s\,H^s_E(p)={\varpi_n}+2s\left(
\int_{B_R(p)}\frac{\chi_{E^c}(x)-\chi_E(x)}{|x-p|^{n}}\,dx-
{\varpi_n}\,\log R
\right)+o(s),
\end{equation}
as $s\searrow0$.
To prove this, we first observe that, up to a translation, we can take $p=0$.
Moreover, since $E$ lies inside $B_R$,
\begin{equation}\label{G56:AKAlap}\begin{split}
& \int_{\R^n\setminus B_R} \frac{\chi_{E^c}(x)-\chi_E(x)}{|x|^{n+2s}}\,dx
=\int_{\R^n\setminus B_R} \frac{dx}{|x|^{n+2s}}
\\ &\qquad=\frac{ {\varpi_n} }{2s\,R^{2s}}
=\frac{ {\varpi_n} }{2s}\,\exp (-2s\,\log R)
\\ &\qquad=\frac{ {\varpi_n} }{2s}\,\big( 1-2s\,\log R+o(s)\big)
.\end{split}\end{equation}
In addition, since $\partial E$ is smooth, we have that (possibly after
a rotation) there exists $\delta_o\in\big(0,\,\min\{1,R\}\big)$
such that, for any $\delta\in(0,\delta_o]$,
$E\cap B_\delta$ contains $\{x_n\le -M|x'|^2\}$ and is
contained in $\{x_n \le M|x'|^2\}$. Here, $M>0$ only depends on the curvatures
of $E$ and we are using the notation~$x=(x',x_n)\in\R^{n-1}\times\R$
(notice also that since
we took~$p=0$, the ball~$B_\delta$ is actually centered at~$p$). 

Therefore,
we have that
$\chi_{E^c}(x)-\chi_{E}(x)=-1$
for any $x\in B_\delta\cap \{x_n\le -M|x'|^2\}$
and
$\chi_{E^c}(x)-\chi_{E}(x)=1$
for any $x\in B_\delta\cap \{x_n\ge M|x'|^2\}$.
In this way, a cancellation gives that
$$ \int_{B_\delta\cap \{|x_n| \ge M|x'|^2\}} \frac{
\chi_{E^c}(x)-\chi_{E}(x)
}{|x|^{n+2s}}\,dx=0.$$
As a consequence, for any $\sigma\in[0,s]$, if $s\in(0,\,1/4)$,
\begin{eqnarray*}
&& \left|\int_{B_\delta} \frac{\chi_{E^c}(x)-\chi_E(x)}{|x|^{n+2\sigma}}\,dx\right|
\le \int_{ \{|x'|\le\delta\}} \,dx'
\int_{ \{|x_n|\le M\,|x'|^2\}} \,dx_n\, \frac{1}{|x|^{n+2\sigma}}\\
\\ &&\qquad\le 2M\int_{ \{|x'|\le\delta\}} \frac{|x'|^2}{|x'|^{n+2\sigma}}\,dx'
\le \frac{ 2M\,{\varpi_n} \,\delta^{1-2\sigma} }{1-2\sigma}
\le 4M\,{\varpi_n} \,\delta^{1/2} .\end{eqnarray*}
Therefore, we use this inequality with $\sigma:=0$
and $\sigma:=s$ and the Dominated Convergence Theorem, to find that
\begin{eqnarray*}&& \lim_{s\searrow0}
\left|
\int_{B_R}\frac{\chi_{E^c}(x)-\chi_E(x)}{|x|^{n}}\,dx-
\int_{B_R}\frac{\chi_{E^c}(x)-\chi_E(x)}{|x|^{n+2s}}\,dx
\right| \\
&\le&
\lim_{s\searrow0}
\left|
\int_{B_R\setminus B_\delta}\frac{\chi_{E^c}(x)-\chi_E(x)}{|x|^{n}}\,dx-
\int_{B_R\setminus B_\delta}\frac{\chi_{E^c}(x)-\chi_E(x)}{|x|^{n+2s}}\,dx
\right| +8M\,{\varpi_n} \,\delta^{1/2}
\\ &=& 8M\,{\varpi_n} \,\delta^{1/2}.
\end{eqnarray*}
Hence, since we can now take $\delta$ arbitrarily small, we conclude that
$$ \lim_{s\searrow0}
\left|
\int_{B_R}\frac{\chi_{E^c}(x)-\chi_E(x)}{|x|^{n}}\,dx-
\int_{B_R}\frac{\chi_{E^c}(x)-\chi_E(x)}{|x|^{n+2s}}\,dx
\right|=0.$$
In view of this, and recalling \eqref{HS-DEF} and \eqref{G56:AKAlap}, we find that
\begin{eqnarray*}
&& \lim_{s\searrow0} \frac1{s}\,\left|
2s\,H^s_E(0)-{\varpi_n}-2s\left(
\int_{B_R}\frac{\chi_{E^c}(x)-\chi_E(x)}{|x|^{n}}\,dx-
{\varpi_n}\,\log R
\right) \right| \\
&\le& \lim_{s\searrow0} \frac1{s}\,\left|
2s\,
\int_{\R^n} \frac{\chi_{E^c}(x)-\chi_E(x)}{|x|^{n+2s}}\,dx
-{\varpi_n}-2s\left(
\int_{B_R}\frac{\chi_{E^c}(x)-\chi_E(x)}{|x|^{n+2s}}\,dx-
{\varpi_n}\,\log R
\right) \right| 
\\ &&\qquad+2\,\left|
\int_{B_R}\frac{\chi_{E^c}(x)-\chi_E(x)}{|x|^{n}}\,dx-
\int_{B_R}\frac{\chi_{E^c}(x)-\chi_E(x)}{|x|^{n+2s}}\,dx
\right|
\\
&=& \lim_{s\searrow0} \frac1{s}\,\Big|
{ {\varpi_n} }\,\big( 1-2s\,\log R+o(s)\big)
-{\varpi_n}
+2s{\varpi_n}\,\log R
\Big| 
\\&=&0.\end{eqnarray*}
This proves \eqref{JCW}.

\section{Second variation formulas and graphs of zero nonlocal mean curvature}
\label{9ojknAAsaw}

In this appendix, we show that the second variation (say, with respect to
a normal perturbation~$\eta$)
of the fractional perimeter of surfaces 
with vanishing mean curvature is given by \begin{equation*} -2 \int_{\partial E} 
\frac{ \eta(y) -\eta({{x}})}{
|{{x}}-y|^{n+2s}}\,d{\mathcal{H}}^{n-1}(y)
+
\int_{\partial E}
\frac{
\eta({{x}})\,\big[1-\nu({{x}})\cdot\nu(y)\big]}{
|{{x}}-y|^{n+2s}}\,d{\mathcal{H}}^{n-1}(y).\end{equation*}
A rigorous statement for this claim will be given in the forthcoming
Lemma~\ref{78yuOOP}: for the moment, we remark that
the expression
above
is related with
the Jacobi field along surfaces of vanishing nonlocal mean curvature.
We refer to \cite{del pino} for full details about this type of formulas.
See in particular formula (1.6) there, which gives the details of this formula,
Lemma A.2 there, which shows that, as $s\nearrow1/2$, the first integral
approaches the Laplace-Beltrami operator and Lemma A.4 there,
which shows that the latter integral produces, as $s\nearrow1/2$,
the norm squared of the second fundamental form, in agreement with the classical case.

Here, for simplicity, we reduce to the case in which $E$ is a graph
and
we consider a small normal deformation of its boundary,
plus an additional small translation, and we write
the resulting manifold as an appropriate normal deformation. The details go as follows:

\begin{lemma}\label{78yuOOP}
Let~$\Sigma\subset\R^n$ be a graph of class $C^2$, and let~$E$ be the corresponding
epigraph. Let~$\nu=(\nu_1,\dots,\nu_n)$
be the exterior normal of~$\Sigma=\partial E$.

Given~$\eps>0$ and~${\bar{x}}\in\Sigma$, we set
\begin{equation}\label{BA1}
\Sigma_\eps^* :=\{ x+\eps\eta(x)\,\nu(x)-\eps\eta({\bar{x}})\,\nu({\bar{x}})
,\; x\in\Sigma\}.\end{equation}
Then, if~$\eps$ is sufficiently small,~$\Sigma_\eps^*$ is a graph, with
epigraph a suitable~$E_\eps^*$, with~${\bar{x}}\in \partial E_\eps^*$,
and
\begin{eqnarray*}
\lim_{\eps\to0} \frac{1}{2\eps}
\big( H^s_E({\bar{x}}) -H^s_{E_\eps^*}({\bar{x}})\big)&=&
\int_{\Sigma}
\frac{
\eta(y)
-\eta({\bar{x}})\,\nu({\bar{x}})\cdot\nu(y)}{
|{\bar{x}}-y|^{n+2s}}\,d{\mathcal{H}}^{n-1}(y)
\\ &=&
\int_{\Sigma}
\frac{
\eta(y)
-\eta({\bar{x}})}{
|{\bar{x}}-y|^{n+2s}}\,d{\mathcal{H}}^{n-1}(y)
+\int_{\Sigma}
\frac{
\eta({\bar{x}})\,\big[1-\nu({\bar{x}})\cdot\nu(y)\big]}{
|{\bar{x}}-y|^{n+2s}}\,d{\mathcal{H}}^{n-1}(y)
.\end{eqnarray*}
\end{lemma}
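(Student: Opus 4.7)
The construction in~\eqref{BA1} is tailored so that $\bar{x}$ remains on both surfaces: setting $x=\bar{x}$ in~\eqref{BA1} returns $\bar{x}$, so $\bar{x}\in\Sigma_\eps^*$. Moreover, $\Sigma_\eps^*$ is the translate by $-\eps\,\eta(\bar{x})\,\nu(\bar{x})$ of the ``purely normal'' perturbation $\tilde\Sigma_\eps:=\{y+\eps\,\eta(y)\,\nu(y):y\in\Sigma\}$. Since $\Sigma$ is $C^2$ and $\eta$ is of class $C^2$ with bounded derivatives, for $\eps$ small the map $y\mapsto y+\eps\,\eta(y)\,\nu(y)$ is a $C^1$-small perturbation of the identity and so $\tilde\Sigma_\eps$ stays a graph in the same direction as $\Sigma$; the translation preserves this property, hence $\Sigma_\eps^*$ is also a graph, and $E_\eps^*$ is well defined.

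For the core computation, I would start from the basic definition~\eqref{HS-DEF} and use the identity $\chi_{E^c}-\chi_E=1-2\chi_E$ to obtain the clean expression
\[
H^s_E(\bar{x})-H^s_{E_\eps^*}(\bar{x})=2\int_{\R^n}\frac{\chi_{E_\eps^*}(x)-\chi_{E}(x)}{|\bar{x}-x|^{n+2s}}\,dx.
\]
The integrand is supported on the symmetric difference $E\,\Delta\,E_\eps^*$, which for $\eps$ small is a thin collar around $\Sigma$. Using tubular-neighborhood coordinates $(y,t)\mapsto y+t\,\nu(y)$ (valid for $|t|$ less than the reach of $\Sigma$) and setting $\zeta(y):=\eta(y)\,\nu(y)-\eta(\bar{x})\,\nu(\bar{x})$, one checks directly that $y+t\,\nu(y)\in E_\eps^*\setminus E$ precisely when $0<t<\eps\,\zeta(y)\cdot\nu(y)+O(\eps^2)$, with the symmetric statement for $E\setminus E_\eps^*$ when $\zeta(y)\cdot\nu(y)<0$. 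Since the Jacobian of the tubular change of variables is $1+O(t)$, integrating in $t$ first yields
\[
\int_{\R^n}\frac{\chi_{E_\eps^*}(x)-\chi_{E}(x)}{|\bar{x}-x|^{n+2s}}\,dx=\eps\int_\Sigma\frac{\zeta(y)\cdot\nu(y)}{|\bar{x}-y|^{n+2s}}\,d\mathcal{H}^{n-1}(y)+o(\eps).
\]
Since $\zeta(y)\cdot\nu(y)=\eta(y)-\eta(\bar{x})\,\nu(\bar{x})\cdot\nu(y)=[\eta(y)-\eta(\bar{x})]+\eta(\bar{x})\,[1-\nu(\bar{x})\cdot\nu(y)]$, division by $2\eps$ and passage to the limit produces both forms of the claimed expression.

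The main obstacle lies in the singularity of the kernel at $y=\bar{x}$. The limit integrals have to be understood as principal values: since $\eta(y)-\eta(\bar{x})=\nabla_\Sigma\eta(\bar{x})\cdot(y-\bar{x})+O(|y-\bar{x}|^2)$, the leading term is $O(|y-\bar{x}|)$, which is \emph{not} absolutely integrable against $|\bar{x}-y|^{-n-2s}$ on the $(n-1)$-dimensional surface $\Sigma$, and whose integrability hinges on the odd symmetry of the tangential first-order term; by contrast, $1-\nu(\bar{x})\cdot\nu(y)=\tfrac{1}{2}|\nu(y)-\nu(\bar{x})|^2=O(|y-\bar{x}|^2)$ is absolutely integrable. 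To make the $o(\eps)$ remainder rigorous uniformly down to the singular point, I would split $\Sigma$ into the spherical cap $\Sigma\cap B_r(\bar{x})$ and its complement: on the complement the kernel is bounded and the tubular expansion is uniform, so the limit $\eps\to 0$ can be taken directly; on the small cap, the $C^2$-regularity of $\Sigma$ and $\eta$, together with the built-in cancellation $\zeta(\bar{x})=0$, yield a contribution bounded by $O(r^{1-2s})$, uniformly in $\eps$. Letting $\eps\to 0$ first and then $r\to 0$ completes the argument.
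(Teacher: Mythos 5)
Your proposal is correct and reaches the result by the same underlying mechanism as the paper: observe that $\bar x\in\Sigma_\eps^*$ by construction, that $\Sigma_\eps^*$ is still a graph for small $\eps$, that $E\,\Delta\,E_\eps^*$ is a thin collar whose normal thickness at $y\in\Sigma$ is $\eps\,\zeta(y)\cdot\nu(y)+O(\eps^2)$ with $\zeta(y)=\eta(y)\nu(y)-\eta(\bar x)\nu(\bar x)$, integrate the kernel across the collar, divide by $2\eps$, and pass to the limit. The difference is in the parametrization and in what is made explicit. The paper works throughout in graph coordinates: it writes the shifted horizontal coordinate $y'=y'(x')$, invokes the Global Inverse Function Theorem (Palais) to invert it globally, expands the new graph $\gamma_\eps^*$ to first order in $\eps$, and reads off the sets $E_\eps^*\setminus E$ and $E\setminus E_\eps^*$ as slabs between graphs; the expression $\Xi(y')=\sqrt{1+|\nabla\gamma|^2}\,\big(\eta-\eta(\bar x)\nu(\bar x)\cdot\nu\big)$ is exactly your $\zeta\cdot\nu$ weighted by the area factor. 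You instead use the tubular map $(y,t)\mapsto y+t\nu(y)$, which makes the appearance of $\zeta(y)\cdot\nu(y)$ immediate and avoids the diffeomorphism bookkeeping; this is cleaner geometrically but hides nothing essential.

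Your paragraph on the principal-value convergence is a genuine improvement in rigor over the paper's presentation: the paper passes through the intermediate limits $\lim_{\eps\to 0}\frac1\eps\int_{E_\eps^*\setminus E}$ and $\lim_{\eps\to 0}\frac1\eps\int_{E\setminus E_\eps^*}$ separately, each of which is formally divergent near $\bar x$ since the collar has thickness $\sim\eps|y-\bar x|$ and the kernel is $|y-\bar x|^{-n-2s}$; only the signed combination converges. You correctly identify the cancellation: $\eta(y)-\eta(\bar x)$ is linear in $y-\bar x$ and odd to leading order so its contribution must be taken in the principal-value sense, whereas $\eta(\bar x)\,[1-\nu(\bar x)\cdot\nu(y)]=O(|y-\bar x|^2)$ is absolutely integrable for $s<\tfrac12$. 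The $\eps$-then-$r$ splitting you sketch is the right way to make this precise, and the $O(r^{1-2s})$ bound on the cap should be read as applying after the odd part has been removed by the PV cancellation. One caution in the opposite direction: your one-line assertion that a ``$C^1$-small perturbation of the identity'' of a global graph is again a global graph is exactly what the paper's invocation of the Global Inverse Function Theorem is there to justify — on an unbounded domain one must check both that the Jacobian of $y'(x')$ never degenerates and that $|y'(x')|\to\infty$ as $|x'|\to\infty$, which requires uniform $C^1$ bounds on $\eta$ and on the curvature of $\Sigma$. That step should not be waved away.
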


\begin{proof}
We denote by~$\gamma:\R^{n-1}\to\R$ the graph of class $C^2$
that describes~$\Sigma$. In this way, we can write~$E=\{x_n <\gamma(x')\}$
and
$$ \nu(x) = \nu\big(x',\gamma(x')\big)=
\frac{\big( -\nabla \gamma(x'),1\big)}{\sqrt{ 1+|\nabla\gamma(x')|^2 }}.$$
We also write~$\kappa=(\kappa',\kappa_n):=\eta({\bar{x}})\,\nu({\bar{x}})$.
Then
\begin{eqnarray*}
\Sigma_\eps^* &=&\left\{ \big(x',\gamma(x')\big)+
\eps\eta\big(x',\gamma(x')\big)
\,\frac{\big( -\nabla \gamma(x'),1\big)}{\sqrt{ 1+|\nabla\gamma(x')|^2 }}
-\eps\kappa
,\; x'\in\R^{n-1}\right\}
\\ &=& 
\left\{ \left( x'-\eps \kappa'-
\frac{\eps\eta\big(x',\gamma(x')\big)\nabla \gamma(x')}{
\sqrt{ 1+|\nabla\gamma(x')|^2} },\;
\gamma(x')-\eps\kappa_n
+\,\frac{\eps\eta\big(x',\gamma(x')\big)}{\sqrt{ 1+|\nabla\gamma(x')|^2 }}
\right)
,\; x'\in\R^{n-1}\right\}.\end{eqnarray*}
So we define
\begin{equation}\label{DF:09:09}
y'=y'(x'):=
x'-\eps \kappa'-
\frac{\eps\eta\big(x',\gamma(x')\big)\nabla \gamma(x')}{
\sqrt{ 1+|\nabla\gamma(x')|^2} }.\end{equation}
Notice that, if~$\eps$ is sufficiently small
$$ \det \frac{\partial y'(x')}{\partial x'}\ne0.$$
Moreover, $|\nabla\gamma(x')|\le 1+|\nabla\gamma(x')|^2$ and therefore
$$ |y'(x)|\ge |x'| -\eps\kappa'-\eps \to +\infty \,{\mbox{ as }}\,
|x|\to+\infty.$$
Hence, by the Global Inverse Function Theorem (see e.g.
Corollary~4.3 in~\cite{MR0116352}),
we have that~$y'$ is a global diffeomorphism of class $C^2$ of~$\R^{n-1}$,
with inverse diffeomorphism~$x'=x'(y')$.
Thus, we obtain
$$ \Sigma_\eps^* =
\left\{ \left( y',\;
\gamma\big(x'(y')\big)-\eps\kappa_n
+\,\frac{\eps\eta\big(x'(y'),\gamma\big(x'(y')\big)\big)}{
\sqrt{ 1+\big|\nabla\gamma\big(x'(y')\big)\big|^2 }}
\right)
,\; y'\in\R^{n-1}\right\}.$$
This is clearly a graph, whose corresponding epigraph can be written as~$E_\eps^*=\{y_n<\gamma_\eps^*(y')\}$, with
$$ \gamma_\eps^*(y'):=
\gamma\big(x'(y')\big)-\eps\kappa_n
+\,\frac{\eps\eta\big(x'(y'),\gamma\big(x'(y')\big)\big)}{
\sqrt{ 1+\big|\nabla\gamma\big(x'(y')\big)\big|^2 }}
.$$
By~\eqref{DF:09:09}, we have that~$y'(x'_0)=x'_0$,
therefore~$\gamma_\eps^*({\bar{x}}')=\gamma({\bar{x}}')$
and so~${\bar{x}}\in \partial E^*_\eps$.
We also notice that
\begin{eqnarray*}
\gamma_\eps^*(y')&=& \gamma(y')+\nabla\gamma(y')\cdot\big(x'(y)-y'\big)
-\eps\kappa_n
+\,\frac{\eps\eta(y',\gamma(y'))}{
\sqrt{ 1+|\nabla\gamma(y')|^2 }} +\eps^2 R(y')\\
&=&
\gamma(y')+\nabla\gamma(y')\cdot\left(
\eps \kappa'+
\frac{\eps\eta(y',\gamma(y'))\nabla \gamma(y')}{
\sqrt{ 1+|\nabla\gamma(y')|^2} }\right)
-\eps\kappa_n
+\,\frac{\eps\eta(y',\gamma(y'))}{
\sqrt{ 1+|\nabla\gamma(y')|^2 }}
+\eps^2 R(y')\\
&=&
\gamma(y')+ \eps\sqrt{ 1+|\nabla\gamma(y')|^2 }\,
\left(
\eta(y',\gamma(y'))
-\kappa\cdot\frac{\big( 
\nabla\gamma(y'),-1\big)}{\sqrt{ 1+|\nabla\gamma(y')|^2 }}
\right)+\eps^2 R(y')
\end{eqnarray*}
for a suitable remainder
functions~$R$ (possibly varying from line
to line), that are bounded if so is~$|D^2\gamma|$.

Accordingly,
\begin{eqnarray*}
E^*_\eps \setminus E &=&
\big\{ \gamma(y')\le y_n<\gamma_\eps^*(y')\big\}\\
&=&
\left\{ \gamma(y')\le y_n<
\gamma(y')+ \eps\big(\Xi(y') +\eps^2 R(y')\big)^+
\right\},
\end{eqnarray*}
where \begin{eqnarray*}
&&\Xi(y'):=
\sqrt{ 1+|\nabla\gamma(y')|^2 }\,
\left(
\eta(y',\gamma(y'))
-\kappa\cdot\tilde\nu(y')
\right)\\
{\mbox{and }}&&\tilde\nu(y'):=
\frac{\big(
\nabla\gamma(y'),-1\big)}{\sqrt{ 1+|\nabla\gamma(y')|^2 }}.\end{eqnarray*}
Notice that~$\tilde\nu(y')=\nu\big(y',\gamma(y')\big)$.
Similarly,
$$E\setminus E^*_\eps\,\subseteq\,
\left\{ \gamma(y')-
\eps\big(\Xi(y') +\eps^2 R(y')\big)^-
\le y_n<\gamma(y')\right\}.$$
Therefore
\begin{eqnarray*}
\lim_{\eps\to0} 
\frac{1}{\eps}
\int_{E^*_\eps \setminus E}\frac{dy}{|{\bar{x}}-y|^{n+2s}}
&=& \lim_{\eps\to0} 
\frac{1}{\eps}
\int_{\R^{n-1}} \left[\int_{\gamma(y')}^{
\gamma(y')+\eps\,(\Xi(y')+\eps R(y'))^+}
\frac{dy_n}{|{\bar{x}}-y|^{n+2s}}
\right]\,dy'\\
&=& 
\int_{\R^{n-1}} 
\frac{ \Xi^+(y')
}{\big( |{\bar{x}}'-y'|^2+|{\bar{x}}_n-
\gamma(y')|^2 \big)^{\frac{n+2s}{2}}}
\,dy'
\end{eqnarray*}
and, similarly
$$ \lim_{\eps\to0}\frac{1}{\eps}
\int_{E\setminus E^*_\eps} \frac{dy}{|{\bar{x}}-y|^{n+2s}}\,=\,
\int_{\R^{n-1}} 
\frac{ \Xi^-(y')
}{\big( |{\bar{x}}'-y'|^2+|{\bar{x}}_n-
\gamma(y')|^2 \big)^{\frac{n+2s}{2}}}
\,dy'.$$
As a consequence,
\begin{eqnarray*}
&&\lim_{\eps\to0}\frac{1}{2\eps}
\big(H^s_E({\bar{x}}) - H^s_{E_\eps^*}({\bar{x}})\big)\\&=&
\lim_{\eps\to0}
\frac{1}{\eps}\left[
\int_{E^*_\eps \setminus E}\frac{dy}{|{\bar{x}}-y|^{n+2s}}
-\int_{E\setminus E^*_\eps} \frac{dy}{|{\bar{x}}-y|^{n+2s}} \right]
\\ &=&
\int_{\R^{n-1}}
\frac{ \Xi(y')
}{\big( |{\bar{x}}'-y'|^2+|{\bar{x}}_n-
\gamma(y')|^2 \big)^{\frac{n+2s}{2}}}
\,dy' \\
&=&
\int_{\R^{n-1}} \sqrt{ 1+|\nabla\gamma(y')|^2 }\,
\frac{\eta(y',\gamma(y'))
-\kappa\cdot\tilde\nu(y')
}{\big( |{\bar{x}}'-y'|^2+|
{\bar{x}}_n-
\gamma(y')|^2 \big)^{\frac{n+2s}{2}}}\,dy' \\
&=& \int_{\Sigma}
\frac{        
\eta(y)
-\kappa\cdot\nu(y)}{|{\bar{x}}-y|^{n+2s}}\,d{\mathcal{H}}^{n-1}(y),
\end{eqnarray*}
that is the desired result.
\end{proof}

An interesting consequence of Lemma \ref{78yuOOP} is that
graphs with vanishing nonlocal mean curvature cannot have horizontal
normals, as given by the following result:

\begin{theorem}\label{HONO}
Let~$E\subset\R^n$. Suppose that $\partial E$ is globally of class $C^2$
and that $H^s_E(x)=0$ for any $x\in \partial E$.

Let~$\nu=(\nu_1(x),\dots,\nu_n(x))$
be the exterior normal of~$E$ at $x\in\partial E$.

Then $\nu_n(x)\ne0$, for any $x\in\partial E$.
\end{theorem}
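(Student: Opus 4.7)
The plan is to argue by contradiction, assuming $\nu_n(\bar{x})=0$ at some $\bar{x}\in\partial E$, and to extract the desired absurdity from Lemma~\ref{78yuOOP} by choosing the test perturbation $\eta(x):=\nu_n(x)=\nu(x)\cdot e_n$. The geometric reason this choice is natural is that $\eta=\nu_n$ is, pointwise, the normal component of the constant vector $e_n$, so that the normal perturbation $x\mapsto x+\eps\,\nu_n(x)\,\nu(x)$ realizes to first order the rigid vertical translation $x\mapsto x+\eps\,e_n$ (the remainder $\eps(e_n-\nu_n(x)\nu(x))$ is tangent to $\Sigma$ and only reparametrizes the surface). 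Because our contradiction hypothesis gives $\eta(\bar{x})=0$, the recentering correction $-\eps\,\eta(\bar{x})\,\nu(\bar{x})$ in the definition of $\Sigma^{*}_\eps$ drops out, so that $\Sigma^{*}_\eps$ coincides with $\Sigma+\eps\,e_n$ up to an $O(\eps^2)$ correction. Since $H^{s}$ is translation invariant and $H^{s}_{E}\equiv 0$ on $\partial E$, one deduces $H^{s}_{E^{*}_\eps}(\bar{x})=o(\eps)$.

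Substituting this information into the conclusion of Lemma~\ref{78yuOOP}, and using $\nu_n(\bar{x})=0$ to simplify, we obtain
$$
0=\lim_{\eps\to 0}\frac{H^{s}_{E}(\bar{x})-H^{s}_{E^{*}_\eps}(\bar{x})}{2\eps}
=\int_{\Sigma}\frac{\nu_n(y)}{|\bar{x}-y|^{n+2s}}\,d{\mathcal{H}}^{n-1}(y).
$$

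The next step is to convert this boundary integral into a volume integral by applying the divergence theorem in the $e_n$ direction. Using $\nu_n=\nu\cdot e_n$ and the identity $\partial_{y_n}|\bar{x}-y|^{-(n+2s)}=(n+2s)(\bar{x}_n-y_n)|\bar{x}-y|^{-(n+2s+2)}$, one gets
$$
\int_{\partial E}\frac{\nu_n(y)}{|\bar{x}-y|^{n+2s}}\,d{\mathcal{H}}^{n-1}(y)
=(n+2s)\int_{E}\frac{\bar{x}_n-y_n}{|\bar{x}-y|^{n+2s+2}}\,dy.
$$
Parametrizing $E$ as the epigraph $\{y_n<g(y')\}$ (the framework of Lemma~\ref{78yuOOP}) and carrying out the inner $y_n$-integral using the elementary antiderivative, this volume integral collapses to
$$
\int_{\R^{n-1}}\frac{dy'}{\bigl(|\bar{x}'-y'|^{2}+(\bar{x}_n-g(y'))^{2}\bigr)^{(n+2s)/2}},
$$
which is manifestly strictly positive. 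This contradicts the vanishing established a moment ago, completing the proof.

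The step I expect to be the main obstacle is the justification that $H^{s}_{E^{*}_\eps}(\bar{x})=o(\eps)$: the surface $\Sigma^{*}_\eps$ agrees with the rigid translate $\Sigma+\eps e_n$ only to leading order, and the two genuinely differ in the \emph{normal} direction at size $\eps^{2}$. One must therefore verify that this $O(\eps^2)$ normal discrepancy does not produce a spurious $O(\eps)$ contribution to the singular integral defining $H^{s}$ at the point $\bar{x}$, which requires a quantitative estimate of the kernel $|\bar{x}-y|^{-(n+2s)}$ against the symmetric difference $\Sigma^{*}_\eps\,\Delta\,(\Sigma+\eps e_n)$ near $\bar{x}$. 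Once that estimate is in place, the structure of the argument, namely translation invariance of $H^{s}$ plus strict positivity of the transformed integral, yields the result directly.
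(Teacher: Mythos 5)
Your argument tracks the paper's proof at every structural step: contradiction at $\bar x$ with $\nu_n(\bar x)=0$; application of Lemma~\ref{78yuOOP} with $\eta:=\nu_n$ (using $\eta(\bar x)=0$ to kill the second term of the formula); translation invariance of $H^s$ combined with the $O(\eps^2)$ discrepancy between the normal deformation $E^*_\eps$ and the rigid translate $E+\eps e_n$; and the resulting identity $\int_\Sigma\nu_n(y)\,|\bar x-y|^{-n-2s}\,d\mathcal{H}^{n-1}(y)=0$. You also correctly flag the hardest step, namely justifying $H^s_{E^*_\eps}(\bar x)=o(\eps)$. The paper handles that through Lemma~\ref{90:90}, which constructs a $C^2$-diffeomorphism $\Psi$ at $O(\eps^2)$ distance from the identity with $\Psi(E_\eps)=E+\eps e_n$, and then Corollary~\ref{COR:90:90}, which invokes Cozzi's $C^{1,\alpha}$-stability estimate for the nonlocal mean curvature. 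As written, you leave this step as a declared obstacle rather than a resolved one, so that part of your proof is genuinely incomplete; you would need to quote or reprove a stability result of this type.

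Your closing positivity argument departs from the paper's and contains a circularity you should repair. The paper extracts the contradiction using only the sign condition $\nu_n\ge0$ and the strict positivity of the kernel: if the integral vanishes then $\nu_n\equiv0$ on $\Sigma$, so $\Sigma$ is a vertical set, contradicting the graph assumption. You instead rewrite the surface integral as a volume integral by the divergence theorem and then collapse it to the manifestly positive quantity $\int_{\R^{n-1}}\bigl(|\bar x'-y'|^2+(\bar x_n-g(y'))^2\bigr)^{-(n+2s)/2}\,dy'$ by integrating out $y_n$ over the epigraph $E=\{y_n<g(y')\}$ with $g$ of class $C^2$. But under your own contradiction hypothesis, $\nu_n(\bar x)=0$ means $\partial E$ is vertical at $\bar x$, so precisely there $\partial E$ cannot be written as the $C^1$ graph of a function of $y'$: the chart needed to perform the inner $y_n$-integration does not exist near the bad point, and the intermediate volume integrand $\bar x_n-y_n$ is not sign-definite on $E$ without that chart. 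The paper's sign-only observation gives the same conclusion without ever needing a global graph parametrization, and you should substitute it for your final display.
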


To prove Theorem \ref{HONO}, we
first compare deformations and translations of a graph.
Namely, we show that a normal deformation of size~$\eps \nu_n$
of a graph with normal~$\nu=(\nu_1,\dots,\nu_n)$ coincides
with a vertical translation of the graph itself, up to
order of~$\eps^2$. The precise result goes as follows:

\begin{lemma}\label{90:90}
Let~$\Sigma\subset\R^n$ be a graph of class $C^2$ globally, and let~$E$ be the corresponding
epigraph. Let~$\nu=(\nu_1,\dots,\nu_n)$
be the exterior normal of~$\Sigma=\partial E$.

Given~$\eps>0$, let
\begin{equation}\label{BA2}
\Sigma_\eps:=\{ x+\eps\nu_n(x)\,\nu(x),\; x\in\Sigma\}.\end{equation}
Then, if~$\eps$ is sufficiently small,~$\Sigma_\eps$ is a graph, for some epigraph~$E_\eps$, and
there exists a $C^2$-diffeomorphism~$\Psi$ of~$\R^n$
that is $C\eps^2$-close to the identity in~$C^2(\R^n)$, for some~$C>0$, such that
$$ \Psi(E_\eps) = E+\eps e_n.$$
\end{lemma}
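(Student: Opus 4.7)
The plan is to parametrize $\Sigma$ as a graph, show that $\Sigma_\eps$ is itself a graph which differs from the vertical translation $\Sigma + \eps e_n$ only at order $\eps^2$, and then take $\Psi$ to be the vertical shift by that second-order discrepancy. The geometric reason this should work is that $\nu_n(x)\nu(x) = (\nu(x)\cdot e_n)\,\nu(x)$ is the orthogonal projection of $e_n$ onto the normal line to $\Sigma$ at $x$, so
$$ x + \eps \nu_n(x)\nu(x) \;=\; (x+\eps e_n) \;-\; \eps\,T(x),\qquad T(x):=e_n-\nu_n(x)\nu(x)\in T_x\Sigma. $$
Thus $\Sigma_\eps$ is obtained from $\Sigma+\eps e_n$ by an $O(\eps)$ tangential motion, and such motions only affect the surface at second order.

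To make this quantitative I would write $\Sigma=\{(x',\gamma(x')):x'\in\R^{n-1}\}$ with $\gamma$ globally $C^2$, so that $\nu(x',\gamma(x'))=(-\nabla\gamma(x'),1)/\sqrt{1+|\nabla\gamma(x')|^2}$ and $\nu_n=1/\sqrt{1+|\nabla\gamma(x')|^2}$. Then \eqref{BA2} yields that $\Sigma_\eps$ is traced out by
$$ y'(x'):=x'-\frac{\eps\,\nabla\gamma(x')}{1+|\nabla\gamma(x')|^2},\qquad y_n(x'):=\gamma(x')+\frac{\eps}{1+|\nabla\gamma(x')|^2}. $$
Arguing exactly as in the proof of Lemma~\ref{78yuOOP}, the Jacobian of $y'(x')$ is $I+O(\eps)$ and the map is proper, so for small $\eps$ the Global Inverse Function Theorem provides a global $C^1$-diffeomorphism $x'=x'(y')$ with the expansion $x'(y')=y'+\eps\nabla\gamma(y')/(1+|\nabla\gamma(y')|^2)+O(\eps^2)$. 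Consequently $\Sigma_\eps$ is the graph of
$$ \gamma_\eps(y'):=\gamma\bigl(x'(y')\bigr)+\frac{\eps}{1+|\nabla\gamma(x'(y'))|^2}, $$
and a two-term Taylor expansion of $\gamma$ about $y'$ collapses the leading terms to give
$$ \gamma_\eps(y')\;=\;\gamma(y')+\frac{\eps\,|\nabla\gamma(y')|^2}{1+|\nabla\gamma(y')|^2}+\frac{\eps}{1+|\nabla\gamma(y')|^2}+O(\eps^2)\;=\;\gamma(y')+\eps+O(\eps^2). $$
Hence $E_\eps$ is the epigraph $\{z_n<\gamma_\eps(z')\}$ as required.

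With this in hand I would simply set $\rho(z'):=\gamma_\eps(z')-\gamma(z')-\eps$ and define
$$ \Psi(z',z_n):=(z',\,z_n-\rho(z')). $$
This is a global bijection of $\R^n$ which shifts only the vertical coordinate by a bounded $C^2$ function, so it is a $C^2$-diffeomorphism with $\|\Psi-\mathrm{id}\|_{C^2(\R^n)}=\|\rho\|_{C^2(\R^{n-1})}$, and the expansion above gives $\|\rho\|_{C^2}\le C\eps^2$. A direct inequality check yields
$$ \Psi(E_\eps)=\{(z',z_n-\rho(z')):z_n<\gamma_\eps(z')\}=\{w_n<\gamma_\eps(z')-\rho(z')\}=\{w_n<\gamma(z')+\eps\}=E+\eps e_n, $$
concluding the proof.

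The main technical obstacle is making the $O(\eps^2)$ remainder in $\gamma_\eps-\gamma-\eps$ uniform in the $C^2$ topology: since $\gamma_\eps$ is defined through the composition with the inverse map $x'(y')$, its second derivatives formally involve $D^3\gamma$. This is handled by using the uniform bounds on $\nabla\gamma$ and $D^2\gamma$ afforded by the global $C^2$ hypothesis (together with a standard mollification if one wants strict $C^2$-closeness, as is implicit in the phrasing ``globally of class $C^2$''); the remainder of the argument is then a routine chain-rule estimate along the lines of the $C^1$-control produced in Lemma~\ref{78yuOOP}.
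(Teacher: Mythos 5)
Your proof is essentially the paper's proof verbatim: identical graph parametrization of $\Sigma$, identical use of the Global Inverse Function Theorem on the induced horizontal change of variables, and the same vertical shear $\Psi$ (your $\rho$ is the paper's $-\Phi$, so $\Psi(z',z_n)=(z',z_n-\rho(z'))$ agrees with the paper's $\Psi(y)=y+\Phi(y')e_n$). The regularity concern you raise at the end is genuine --- the $C^2$ estimate on $\rho$ formally requires $x'(y')\in C^2$, which in turn requires $\nabla\gamma\in C^2$, i.e.\ $\gamma\in C^3$, a point the paper silently overlooks --- but the mollification you invoke does not actually resolve it, since a mollification of $\gamma$ is close in $C^2$ but not in $C^3$; the clean fix is to strengthen the hypothesis to $\gamma\in C^3$ (or $C^{2,1}$) or to weaken the conclusion to $C^1$-closeness, which is all that the subsequent application in the proof of Theorem~\ref{HONO} actually uses via Corollary~\ref{COR:90:90}.
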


\begin{proof} We denote by~$\gamma:\R^{n-1}\to\R$ the graph
that describes~$\Sigma$. In this way, we can write~$E=\{x_n <\gamma(x')\}$
and
$$ \nu(x) = \nu\big(x',\gamma(x')\big)=
\frac{\big( -\nabla \gamma(x'),1\big)}{\sqrt{ 1+|\nabla\gamma(x')|^2 }}.$$
Accordingly,
\begin{eqnarray*}
\Sigma_\eps&=&\left\{ \big(x',\gamma(x')\big)+\eps 
\frac{\big( -\nabla \gamma(x'),1\big)}{ 1+|\nabla\gamma(x')|^2 },
\;x'\in\R^{n-1}
\right\} 
\\&=&
\left\{ \left(x'-\eps
\frac{\nabla \gamma(x')}{ 1+|\nabla\gamma(x')|^2 },\,
\gamma(x')+\frac{\eps}{ 1+|\nabla\gamma(x')|^2 }
\right),\;x'\in\R^{n-1}
\right\}.
\end{eqnarray*}
To write~$\Sigma_\eps$ as a graph, we take as new coordinate
\begin{equation}\label{ouKAJJAIAJOJAJ1234JA} y'=y'(x'):=x'-\eps
\frac{\nabla \gamma(x')}{ 1+|\nabla\gamma(x')|^2 }.\end{equation}
Notice that, if~$\eps$ is sufficiently small
$$ \det \frac{\partial y'(x')}{\partial x'}\ne0.$$
Moreover, $|\nabla\gamma(x')|\le 1+|\nabla\gamma(x')|^2$ and therefore
$$ |y'(x)|\ge |x'| -\eps \to +\infty \,{\mbox{ as }}\,
|x'|\to+\infty.$$
As a consequence, by the Global Inverse Function Theorem (see e.g.
Corollary~4.3 in~\cite{MR0116352}),
we have that~$y'$ is a global diffeomorphism of class $C^2$ of~$\R^{n-1}$,
we write~$x'=x'(y')$ the inverse diffeomorphism and we have that
$$ \Sigma_\eps =
\left\{ \left(y',\,
\gamma \big(x'(y')\big)+
\frac{\eps}{ 1+\big|\nabla\gamma\big(x'(y')\big)\big|^2 }
\right),\;y'\in\R^{n-1}
\right\}.$$
So we can write the epigraph of~$\Sigma_\eps$ as
\begin{equation*}
E_\eps=
\left\{ y_n<
\gamma \big(x'(y')\big)+
\frac{\eps}{ 1+\big|\nabla\gamma\big(x'(y')\big)\big|^2 }
\right\}.\end{equation*}
Now we define
\begin{equation}\label{equaadsf13243} 
\Phi(y') := \gamma(y')-\gamma \big(x'(y')\big)
+\eps - \frac{\eps}{ 1+\big|\nabla\gamma\big(x'(y')\big)\big|^2 }\end{equation}
and~$z=\Psi(y)=\Psi(y',y_n):=y+\Phi(y')e_n$.
By construction, we have that
$$\Psi(E_\eps) = \big\{ z_n <\gamma(z')+\eps\big\}=E+\eps e_n.$$
To complete the proof of Lemma~\ref{90:90},
we need to show that
\begin{equation}\label{ouKAJJAIAJOJAJ1234JA:2} 
\|\Phi\|_{C^2(\R^n)}\le C\eps^2,
\end{equation}
for some~$C>0$. To this aim, we use \eqref{ouKAJJAIAJOJAJ1234JA} 
to see that
$$ x'=y'+\eps
\frac{\nabla \gamma(y')}{ 1+|\nabla\gamma(y')|^2 } + \phi_1(y'),$$
with $\|\phi_1\|_{C^2(\R^n)}\le C\eps^2$. Accordingly, by \eqref{equaadsf13243}, we have that
\begin{eqnarray*}
\Phi(y') &=& \gamma(y')-\gamma \left( y'+\eps
\frac{\nabla \gamma(y')}{ 1+|\nabla\gamma(y')|^2 } + \phi_1(y')\right)
+\eps - \frac{\eps}{ 1+\left|\nabla\gamma\left(
y'+\eps
\frac{\nabla \gamma(y')}{ 1+|\nabla\gamma(y')|^2 } + \phi_1(y')
\right)\right|^2}\\
&=&
\gamma(y')-\gamma(y')- \eps
\frac{|\nabla \gamma(y')|^2}{ 1+|\nabla\gamma(y')|^2 } 
+\eps - \frac{\eps}{1+|\nabla\gamma(y')|^2}
+ \phi_2(y')
\\&=& \phi_2(y'),
\end{eqnarray*}
with $\|\phi_2\|_{C^2(\R^n)}\le C\eps^2$.
This proves \eqref{ouKAJJAIAJOJAJ1234JA:2}, as desired.
\end{proof}

{F}rom Lemma~\ref{90:90} here and Theorem~1.1 in~\cite{MR3393254},
we obtain:

\begin{corollary}\label{COR:90:90}
In the setting of Lemma~\ref{90:90}, for any~$p\in\Sigma_\eps=\partial E_\eps$
we have that
$$ \big| H^s_{E_\eps} (p) - 
H^s_{E+\eps e_n} \big(\Psi(p)\big)
\big|\le C\eps^2,$$
for some~$C>0$.
\end{corollary}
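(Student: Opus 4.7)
The plan is to combine Lemma~\ref{90:90} with the stability estimate of Theorem~1.1 in~\cite{MR3393254}. First, Lemma~\ref{90:90} supplies a $C^2$-diffeomorphism $\Psi$ of $\R^n$ with $\Psi(E_\eps)=E+\eps e_n$ and $\|\Psi-\mathrm{Id}\|_{C^2(\R^n)}\le C\eps^2$. In particular, $\Psi$ sends $\partial E_\eps=\Sigma_\eps$ onto $\partial(E+\eps e_n)$, so for every $p\in\Sigma_\eps$ the image $\Psi(p)$ lies on $\partial(E+\eps e_n)$ and the expression $H^s_{E+\eps e_n}(\Psi(p))$ is meaningful.

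Next, I would invoke Theorem~1.1 of~\cite{MR3393254}, which asserts that if two sets with $C^2$ boundaries are related by a global $C^2$-diffeomorphism of $\R^n$ that differs from the identity by at most $\delta$ in $C^2$-norm, then, at corresponding points, their fractional mean curvatures differ by at most $C\delta$, with $C$ depending only on $n$, $s$ and on uniform $C^2$ bounds for the boundaries involved. Applying this with $F_1:=E_\eps$, $F_2:=E+\eps e_n$, $q:=p$ and $\delta:=C\eps^2$, which is legitimate because $\Sigma$ is of class $C^2$ globally and $\Sigma_\eps$ inherits a uniform $C^2$ bound for $\eps$ small, by~\eqref{BA2}, immediately yields
$$\big|H^s_{E_\eps}(p)-H^s_{E+\eps e_n}(\Psi(p))\big|\le C'\eps^2,$$
which is the desired statement.

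The main obstacle, were one to reprove the stability estimate here rather than cite it, would be controlling the change of variable $x=\Psi(y)$ near the singularity of the kernel. The substitution produces a kernel $|\Psi(y)-\Psi(p)|^{-n-2s}\,|\det D\Psi(y)|$ whose pointwise difference from $|y-p|^{-n-2s}$ is not integrable against $\chi_{E_\eps^c}-\chi_{E_\eps}$ near $y=p$; one therefore has to exploit odd cancellations of the leading-order perturbation of the kernel (in the principal value sense) together with the $C^2$ regularity of $\partial E_\eps$ to extract a genuine $O(\eps^2)$ bound on the curvature discrepancy. Since this cancellation is precisely what Theorem~1.1 of~\cite{MR3393254} codifies, the corollary follows at once from the two cited ingredients without any further analysis.
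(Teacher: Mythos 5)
Your proposal is correct and follows essentially the same route as the paper, which proves the corollary precisely by combining Lemma~\ref{90:90} (producing the global $C^2$-diffeomorphism $\Psi$ that is $O(\eps^2)$-close to the identity) with the stability estimate of Theorem~1.1 in~\cite{MR3393254}. Your closing remarks on why a naive change of variable in the singular integral fails, and on the role of the principal-value cancellations encoded in Cozzi's theorem, are accurate commentary but do not alter the argument.
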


Now we complete the proof of Theorem \ref{HONO}. To this aim, we observe that
\begin{equation}\label{joAKPAJJA}
\nu_n(x)\ge0 {\mbox{ for any }} x\in\partial E,
\end{equation}
since $E$ is a graph. Suppose that, by contradiction,
\begin{equation}\label{hiKNAJJA1kjmsx}
\nu_n(\bar x)=0 {\mbox{ for some }} \bar x\in\partial E.
\end{equation}
We use this and Lemma \ref{78yuOOP} with $\eta:=\nu_n$ and we find that
\begin{equation}\label{pl9uihtrsghA} \lim_{\eps\to0} \frac{1}{2\eps}
\big( H^s_E({\bar{x}}) -H^s_{E_\eps^*}({\bar{x}})\big)=
\int_{\Sigma}
\frac{
\nu_n(y)}{
|{\bar{x}}-y|^{n+2s}}\,d{\mathcal{H}}^{n-1}(y).\end{equation}
Also, comparing \eqref{BA1} (with $\eta:=\nu_n$) and \eqref{BA2}, 
and using again \eqref{hiKNAJJA1kjmsx}, we see that
$E^*_\eps= E_\eps$ and so Corollary \ref{COR:90:90} gives that
$$ H^s_{E_\eps^*} (\bar x) =
H^s_{E+\eps e_n} (\bar y)+O(\eps^2),$$
for some $\bar y\in\partial E+\eps e_n$.
Since $H^s_E$ vanishes, we can use the translation invariance
to see that also $H^s_{E+\eps e_n}$ vanishes. So we conclude that
$$ H^s_{E_\eps^*} (\bar x) = O(\eps^2).$$
These observations and \eqref{pl9uihtrsghA} imply that
$$ \int_{\Sigma}
\frac{
\nu_n(y)}{
|{\bar{x}}-y|^{n+2s}}\,d{\mathcal{H}}^{n-1}(y)=0.$$
Hence, in view of \eqref{joAKPAJJA}, we see that $\nu_n$ must vanish identically along $\Sigma$.
This says that $\Sigma$ is a vertical hyperplane, in contradiction with the graph assumption.
This ends the proof of Theorem \ref{HONO}.

\end{appendix}

\section*{Acknowledgements}
The first author has been supported by the Alexander 
von Humboldt Foundation. 
The second author has been supported by the ERC grant 
277749 \emph{E.P.S.I.L.O.N. Elliptic
Pde's and Symmetry of Interfaces and 
Layers for Odd Nonlinearities}.

{\footnotesize
\section*{Online lectures}

There are a few videotaped lectures online which collect some of the material
presented in this set of notes. The interest reader may look at

\begin{verbatim}
http://www.birs.ca/events/2014/5-day-workshops/14w5017/videos/watch/201405271048-Valdinoci.html

https://www.youtube.com/watch?v=2j2r1ykoyuE

https://www.youtube.com/watch?v=EDJ8uBpYpB4

https://www.youtube.com/watch?v=s_RRzgZ7VcM&list=PLj6jTBBj-5B_Vx5qA-HelhGUnGrCu7SdW&index=7

https://www.youtube.com/watch?v=okXncmRbCZc&index=14&list=PLj6jTBBj-5B_Vx5qA-HelhGUnGrCu7SdW

http://www.fields.utoronto.ca/video-archive/2016/06/2022-15336

http://www.mathtube.org/lecture/video/nonlocal-equations-various-perspectives-lecture-1

http://www.mathtube.org/lecture/video/nonlocal-equations-various-perspectives-lecture-2

http://www.mathtube.org/lecture/video/nonlocal-equations-various-perspectives-lecture-3

http://www.birs.ca/events/2016/5-day-workshops/16w5065/videos/watch/201609291100-Dipierro.html 
\end{verbatim}}

\section*{References}

\begin{biblist}[\normalsize]

\bib{abatangelo}{article}{
   author={Abatangelo, Nicola},
   author={Valdinoci, Enrico},
   title={A notion of nonlocal curvature},
   journal={Numer. Funct. Anal. Optim.},
   volume={35},
   date={2014},
   number={7-9},
   pages={793--815},
   issn={0163-0563},
   review={\MR{3230079}},
   doi={10.1080/01630563.2014.901837},
}

\bib{MR2765717}{article}{
   author={Ambrosio, Luigi},
   author={De Philippis, Guido},
   author={Martinazzi, Luca},
   title={Gamma-convergence of nonlocal perimeter functionals},
   journal={Manuscripta Math.},
   volume={134},
   date={2011},
   number={3-4},
   pages={377--403},
   issn={0025-2611},
   review={\MR{2765717}},
   doi={10.1007/s00229-010-0399-4},
}

\bib{bego}{article}{
   author={Barrios, Bego{\~n}a},
   author={Figalli, Alessio},
   author={Valdinoci, Enrico},
   title={Bootstrap regularity for integro-differential operators and its
   application to nonlocal minimal surfaces},
   journal={Ann. Sc. Norm. Super. Pisa Cl. Sci. (5)},
   volume={13},
   date={2014},
   number={3},
   pages={609--639},
   issn={0391-173X},
   review={\MR{3331523}},
}

\bib{MR1945278}{article}{
   author={Bourgain, Jean},
   author={Brezis, Ha{\"{\i}}m},
   author={Mironescu, Petru},
   title={Limiting embedding theorems for $W^{s,p}$ when $s\uparrow1$ and
   applications},
   note={Dedicated to the memory of Thomas H.\ Wolff},
   journal={J. Anal. Math.},
   volume={87},
   date={2002},
   pages={77--101},
   issn={0021-7670},
   review={\MR{1945278}},
   doi={10.1007/BF02868470},
}

\bib{BucLom}{article}{
   author={Bucur, Claudia},
   author={Lombardini, Luca},
   author={Valdinoci, Enrico},
   title={Complete stickiness of nonlocal minimal surfaces
for small values of the fractional parameter},
   journal={{P}reprint},
   date={2016},
}

\bib{2015arXiv150408292B}{book}{
   author={Bucur, Claudia},
   author={Valdinoci, Enrico},
   title={Nonlocal diffusion and applications},
   series={Lecture Notes of the Unione Matematica Italiana},
   volume={20},
   publisher={Springer, [Cham]; Unione Matematica Italiana, Bologna},
   date={2016},
   pages={xii+155},
   isbn={978-3-319-28738-6},
   isbn={978-3-319-28739-3},
   review={\MR{3469920}},
   doi={10.1007/978-3-319-28739-3},
}

\bib{zbMATH02581579}{article}{
    author = {Caccioppoli, Renato},
    title = {{Sulle quadratura delle superficie piane e curve.}},
    journal = {{Atti Accad. Naz. Lincei, Rend., VI. Ser.}},
    volume = {6},
    date = {1927},
    pages = {142--146},
    issn = {0001-4435},
    Zbl = {53.0214.02}
}

\bib{CRS}{article}{
   author={Caffarelli, L.},
   author={Roquejoffre, J.-M.},
   author={Savin, O.},
   title={Nonlocal minimal surfaces},
   journal={Comm. Pure Appl. Math.},
   volume={63},
   date={2010},
   number={9},
   pages={1111--1144},
   issn={0010-3640},
   review={\MR{2675483 (2011h:49057)}},
   doi={10.1002/cpa.20331},
}

\bib{MR2564467}{article}{
   author={Caffarelli, Luis A.},
   author={Souganidis, Panagiotis E.},
   title={Convergence of nonlocal threshold dynamics approximations to front
   propagation},
   journal={Arch. Ration. Mech. Anal.},
   volume={195},
   date={2010},
   number={1},
   pages={1--23},
   issn={0003-9527},
   review={\MR{2564467 (2011c:80008)}},
   doi={10.1007/s00205-008-0181-x},
}

\bib{MR2782803}{article}{
   author={Caffarelli, Luis},
   author={Valdinoci, Enrico},
   title={Uniform estimates and limiting arguments for nonlocal minimal
   surfaces},
   journal={Calc. Var. Partial Differential Equations},
   volume={41},
   date={2011},
   number={1-2},
   pages={203--240},
   issn={0944-2669},
   review={\MR{2782803}},
   doi={10.1007/s00526-010-0359-6},
}

\bib{CV}{article}{
   author={Caffarelli, Luis},
   author={Valdinoci, Enrico},
   title={Regularity properties of nonlocal minimal surfaces via limiting
   arguments},
   journal={Adv. Math.},
   volume={248},
   date={2013},
   pages={843--871},
   issn={0001-8708},
   review={\MR{3107529}},
   doi={10.1016/j.aim.2013.08.007},
}

\bib{joaq}{article}{
   author = {{Cinti}, Eleonora},
   author = {{Serra}, Joaquim},
   author = {{Valdinoci}, Enrico},
    title = {Quantitative flatness results and $BV$-estimates for stable nonlocal minimal surfaces},
  journal = {ArXiv e-prints},
archivePrefix = {arXiv},
   eprint = {1602.00540},
 primaryClass = {math.AP},
 keywords = {Mathematics - Analysis of PDEs},
     date = {2016},
   adsurl = {http://adsabs.harvard.edu/abs/2016arXiv160200540C},
  adsnote = {Provided by the SAO/NASA Astrophysics Data System}
}

\bib{MR3393254}{article}{
   author={Cozzi, Matteo},
   title={On the variation of the fractional mean curvature under the effect
   of $C^{1,\alpha}$ perturbations},
   journal={Discrete Contin. Dyn. Syst.},
   volume={35},
   date={2015},
   number={12},
   pages={5769--5786},
   issn={1078-0947},
   review={\MR{3393254}},
   doi={10.3934/dcds.2015.35.5769},
}

\bib{MR1942130}{article}{
   author={D{\'a}vila, J.},
   title={On an open question about functions of bounded variation},
   journal={Calc. Var. Partial Differential Equations},
   volume={15},
   date={2002},
   number={4},
   pages={519--527},
   issn={0944-2669},
   review={\MR{1942130}},
   doi={10.1007/s005260100135},
}

\bib{del pino}{article}{
    author = {{D{\'a}vila}, Juan},
    author = {{del Pino}, Manuel},
    author = {{Wei}, Juncheng},
     title = {Nonlocal $s$-minimal surfaces and Lawson cones},
   journal = {J. Differential Geom.},
}

\bib{guida}{article}{
   author={Di Nezza, Eleonora},
   author={Palatucci, Giampiero},
   author={Valdinoci, Enrico},
   title={Hitchhiker's guide to the fractional Sobolev spaces},
   journal={Bull. Sci. Math.},
   volume={136},
   date={2012},
   number={5},
   pages={521--573},
   issn={0007-4497},
   review={\MR{2944369}},
   doi={10.1016/j.bulsci.2011.12.004},
}

\bib{MR3506705}{article}{
   author={Dipierro, S.},
   title={Asymptotics of fractional perimeter functionals and related
   problems},
   journal={Rend. Semin. Mat. Univ. Politec. Torino},
   volume={72},
   date={2014},
   number={1-2},
   pages={3--16},
   issn={0373-1243},
   review={\MR{3506705}},
}

\bib{MR3007726}{article}{
   author={Dipierro, Serena},
   author={Figalli, Alessio},
   author={Palatucci, Giampiero},
   author={Valdinoci, Enrico},
   title={Asymptotics of the $s$-perimeter as $s\searrow0$},
   journal={Discrete Contin. Dyn. Syst.},
   volume={33},
   date={2013},
   number={7},
   pages={2777--2790},
   issn={1078-0947},
   review={\MR{3007726}},
   doi={10.3934/dcds.2013.33.2777},
}
	
\bib{stick}{article}{
   author = {Dipierro, Serena},
   author = {Savin, Ovidiu},
   author = {Valdinoci, Enrico},
    title = {Boundary behavior of nonlocal minimal surfaces},
  journal = {J. Funct. Anal.},
}

\bib{fig}{article}{
    author = {{Figalli}, Alessio},
    author = {{Valdinoci}, Enrico},
     title = {Regularity and {B}ernstein-type results for nonlocal minimal 
surfaces},
   journal = {J. Reine Angew. Math.},
doi={10.1515/crelle-2015-0006},
}

\bib{giusti}{book}{
   author={Giusti, Enrico},
   title={Minimal surfaces and functions of bounded variation},
   note={With notes by Graham H. Williams;
   Notes on Pure Mathematics, 10},
   publisher={Department of Pure Mathematics, Australian National
   University, Canberra},
   date={1977},
   pages={xi+185},
   isbn={0-7081-1294-3},
   review={\MR{0638362 (58 \#30685)}},
}

\bib{MR2487027}{article}{
   author={Imbert, Cyril},
   title={Level set approach for fractional mean curvature flows},
   journal={Interfaces Free Bound.},
   volume={11},
   date={2009},
   number={1},
   pages={153--176},
   issn={1463-9963},
   review={\MR{2487027}},
   doi={10.4171/IFB/207},
}

\bib{MR1940355}{article}{
   author={Maz{\cprime}ya, V.},
   author={Shaposhnikova, T.},
   title={On the Bourgain, Brezis, and Mironescu theorem concerning limiting
   embeddings of fractional Sobolev spaces},
   journal={J. Funct. Anal.},
   volume={195},
   date={2002},
   number={2},
   pages={230--238},
   issn={0022-1236},
   review={\MR{1940355}},
   doi={10.1006/jfan.2002.3955},
}

\bib{MR0116352}{article}{
   author={Palais, Richard S.},
   title={Natural operations on differential forms},
   journal={Trans. Amer. Math. Soc.},
   volume={92},
   date={1959},
   pages={125--141},
   issn={0002-9947},
   review={\MR{0116352}},
}

\bib{MR2033060}{article}{
   author={Ponce, Augusto C.},
   title={A new approach to Sobolev spaces and connections to
   $\Gamma$-convergence},
   journal={Calc. Var. Partial Differential Equations},
   volume={19},
   date={2004},
   number={3},
   pages={229--255},
   issn={0944-2669},
   review={\MR{2033060}},
   doi={10.1007/s00526-003-0195-z},
}

\bib{MR2948285}{article}{
   author={Savin, Ovidiu},
   author={Valdinoci, Enrico},
   title={$\Gamma$-convergence for nonlocal phase transitions},
   journal={Ann. Inst. H. Poincar\'e Anal. Non Lin\'eaire},
   volume={29},
   date={2012},
   number={4},
   pages={479--500},
   issn={0294-1449},
   review={\MR{2948285}},
   doi={10.1016/j.anihpc.2012.01.006},
}

\bib{SV}{article}{
   author={Savin, Ovidiu},
   author={Valdinoci, Enrico},
   title={Regularity of nonlocal minimal cones in dimension 2},
   journal={Calc. Var. Partial Differential Equations},
   volume={48},
   date={2013},
   number={1-2},
   pages={33--39},
   issn={0944-2669},
   review={\MR{3090533}},
   doi={10.1007/s00526-012-0539-7},
}
	
\bib{MR1111612}{article}{
   author={Visintin, Augusto},
   title={Generalized coarea formula and fractal sets},
   journal={Japan J. Indust. Appl. Math.},
   volume={8},
   date={1991},
   number={2},
   pages={175--201},
   issn={0916-7005},
   review={\MR{1111612 (92e:49054)}},
   doi={10.1007/BF03167679},
}

\end{biblist}

\end{document}